\def\Trop{\operatorname{Trop}}
\def\gg{\mathbf{g}}
\newcommand{\za}{\alpha}
\newcommand{\zD}{\Delta}
\newcommand{\zg}{\gamma}
\DeclareMathOperator{\UR}{ur}
\DeclareMathOperator{\tr}{tr}
\newcommand{\A}{\mathcal{A}}
\newcommand{\PP}{\mathbb{P}}
\newcommand{\R}{\mathbb{R}}
\newcommand{\Z}{\mathbb{Z}}
\newcommand{\Y}{\mathsf{y}}
\def\Aprin{\Acal_\bullet}
\def\Acal{\mathcal{A}}
\def\Fcal{\mathcal{F}}
\newtheorem{theorem}{Theorem}[section]
\newtheorem{lemma}[theorem]{Lemma}
\newtheorem{prop}[theorem]{Proposition}
\newtheorem{cor}[theorem]{Corollary}
\newtheorem{corollary}[theorem]{Corollary}
\theoremstyle{definition}
\newtheorem{definition}[theorem]{Definition}
\newtheorem{Def}[theorem]{Definition}
\newtheorem{example}[theorem]{Example}
\theoremstyle{remark}
\newtheorem{remark}[theorem]{Remark}
\numberwithin{equation}{section}
\begin{document}
\title{Matrix formulae and skein relations for cluster algebras from surfaces}
\author{Gregg Musiker}
\address{Department of Mathematics, University of Minnesota,
Minneapolis, MN 55455}
\email{musiker@math.umn.edu}
\thanks{{The first author is partially supported by NSF grant DMS-1067183.  The second author is partially supported by the 
NSF grant DMS-0854432 and a Sloan fellowship.}}
\author{Lauren Williams}
\address{Department of Mathematics, University of California,
Berkeley, CA 94720}
\email{williams@math.berkeley.edu}

\subjclass[2000]{05C70, 05E15, 13F60}  
\date{\today}
\dedicatory{}

\keywords{cluster algebra, positivity conjecture, 
triangulated surfaces}

\begin{abstract}
This paper concerns cluster algebras 
with principal coefficients
$\Aprin(S,M)$ associated to bordered surfaces  $(S,M)$,
and is a companion
to a concurrent work of the authors with Schiffler
\cite{MSW2}.  Given any 
(generalized) arc or loop in the surface
-- with or without self-intersections -- we associate
an element of (the fraction field of) $\Aprin(S,M)$,
using products of elements of $PSL_2(\R)$. 
We give a direct proof that our matrix formulas for 
arcs and loops
agree with the
combinatorial formulas for arcs and loops in terms of matchings, 
which were 
given in \cite{MSW, MSW2}.  
Finally, we use our matrix formulas to prove \emph{skein 
relations} for the cluster algebra elements associated
to arcs and loops.  Our matrix formulas and skein relations
generalize prior work of Fock and Goncharov 
\cite{FG1, FG2, FG3}, who worked in the coefficient-free case.
The results of this paper will be used in \cite{MSW2}
in order to show that certain collections of arcs and loops
comprise a vector-space basis for $\Aprin(S,M)$.

\end{abstract}
 
\maketitle
\setcounter{tocdepth}{1}
\tableofcontents

\section{Introduction}\label{intro}
Since their introduction by Fomin and Zelevinsky \cite{FZ1}, 
cluster algebras have been 
related to diverse areas of mathematics
such as total positivity, quiver representations, 
tropical geometry, Lie theory, Poisson geometry, and Teichm\"uller theory. 
There is an important
 class of cluster algebras arising from {\it bordered surfaces with marked
points}, introduced by Fomin, Shapiro, and Thurston in \cite{FST} 
(which in turn generalized work of Fock and Goncharov \cite{FG1, FG2} and Gekhtman, Shapiro, and 
Vainshtein \cite{GSV}), 
and further developed  in \cite{FT}.  
Such cluster algebras are interesting for several reasons: 
they comprise 
``most" of the mutation-finite skew-symmetric cluster 
algebras \cite{FeSTu}, and also 
they can be thought  of as coordinate rings for the 
\emph{decorated Teichm\"uller space} of $(S,M)$.
More specifically, 
the cluster variable associated to an arc in $(S,M)$ corresponds
to the {\it Penner coordinate} \cite{Pen} or
{\it exponentiated lambda length} of that arc.  

Because of the interpretation in terms of 
decorated Teichm\"uller space, one can express the Laurent
expansion of a cluster
variable associated to an arc in terms of a product of 
matrices in $PSL_2(\R)$; this was explained by Fock and
Goncharov in \cite{FG1, FG3}
in the coefficient-free case.
On the other hand, in our previous work with Schiffler \cite{MSW},
we gave formulas for the Laurent expansion of every cluster variable
in terms of perfect matchings of certain planar graphs.
We worked in the generality of principal coefficients,
and as a consequence proved the positivity conjecture for
cluster algebras from surfaces whose coefficient system is of geometric type.

Besides the positivity conjecture, a main open problem about cluster
algebras is to construct (vector-space) bases which have ``good" positivity
properties.  It is expected (and in many cases proved) that the 
cluster monomials are linearly independent \cite{CK,FZ4, DWZ}, 
and should be a part of 
such a basis, but in general, one needs some extra elements to complete
the cluster monomials to a basis.
In concurrent work \cite{MSW2}, we construct 
bases for the cluster algebras $\Aprin(S,M)$ from surfaces, with 
principal coefficients with respect to a seed $T$. 
In order to construct these bases and prove that they span the 
cluster algebra, 
we need to associate cluster
algebra elements $X_{\gamma}$ not only to arcs but also to generalized arcs 
and closed loops $\gamma$ 
(with self-intersections allowed).  Our formulas in \cite{MSW2} for such 
elements also involve matchings of certain graphs, but in the case 
of closed loops, these graphs lie on a M\"{o}bius strip or an annulus. 

In this paper we associate cluster algebra elements $\chi_{\gamma}$
to generalized
arcs and closed loops $\gamma$ using products of matrices in 
$PSL_2(\R)$. 
We prove that this definition using matrices
agrees with the definition using matchings, that is, that 
$\chi_{\gamma} = X_{\gamma}$ for any generalized arc or 
closed loop $\gamma$. 
In order to prove this, we 
prove a general combinatorial result
which explains how to enumerate matchings of a \emph{snake} or 
\emph{band} graph using products of $2 \times 2$ matrices.
We then prove \emph{skein relations},
which allow us to multiply elements
$\chi_{\alpha}$ and $\chi_{\beta}$, where $\alpha$ and $\beta$
are generalized arcs or closed loops.  Topologically, these
skein relations resolve crossings in the corresponding arcs or loops.

Note that 
our matrix formulas and skein relations
generalize prior work of Fock and Goncharov 
\cite{FG1, FG2, FG3}, who worked in the coefficient-free case.
However, it is crucial for us to work in the context of principal
coefficients, because our proofs in \cite{MSW2} use the notion 
of \emph{$\gg$-vectors}, which 
are defined in the case of principal 
coefficients.
Matrix formulas have also appeared in related literature, including \cite{String, ARS, BW, BW2, LambdaLengths, TropLambda}.

This paper is organized as follows. 
We give background on cluster 
algebras from surfaces
in Section \ref{sect surfaces}.  
In Sections \ref{sect main} and \ref{sec matrix} we give 
our combinatorial formulas from \cite{MSW2} and
our matrix formulas
for the cluster algebra 
elements associated to generalized arcs and closed loops.
In Section \ref{Sec:matrix=match} we prove that
these two formulas coincide.  Finally, in 
Section \ref{sec:skein} we prove the skein relations.

\vspace{1em}

\textsc{Acknowledgements:} 
We would like to thank Sergey Fomin, Ren Guo, Christophe Reutenauer, 
and Helen Wong for useful discussions.  
We are particularly grateful to Ralf Schiffler for his joint work 
with us, to Alexander Goncharov for inquiring about the connection
between his work and ours, and to Dylan Thurston for his inspirational lectures
in Morelia, Mexico.


\section{Cluster algebras arising from 
    surfaces}\label{sect surfaces} 

We assume that the reader is familiar with the notion of 
a cluster algebra and the terminology of \cite{FZ4}, including
principal coefficients and 
$F$-polynomials.  
We will begin by providing background on cluster algebras from surfaces.

Building on work of Fock and Goncharov \cite{FG1, FG2}, and of 
Gekhtman, Shapiro and Vainshtein \cite{GSV}, 
Fomin, Shapiro and Thurston \cite{FST} associated a cluster algebra
to any {\it bordered surface with marked points}.  In 
this section we will recall that construction, as well
as further results of Fomin and Thurston \cite{FT}.

\begin{Def}
[\emph{Bordered surface with marked points}]
Let $S$ be a connected oriented 2-dimensional Riemann surface with
(possibly empty)
boundary.  Fix a nonempty set $M$ of {\it marked points} in the closure of
$S$ with at least one marked point on each boundary component. The
pair $(S,M)$ is called a \emph{bordered surface with marked points}. Marked
points in the interior of $S$ are called \emph{punctures}.  
\end{Def}
 
For technical reasons, we require that $(S,M)$ is not
a sphere with one, two or three punctures;
a monogon with zero or one puncture; 
or a bigon or triangle without punctures.
 
\subsection{Ideal triangulations and tagged triangulations}

\begin{definition}
[\emph{Ordinary arcs}]
An \emph{arc} $\zg$ in $(S,M)$ is a curve in $S$, considered up
to isotopy, such that: 
the endpoints of $\zg$ are in $M$;
$\zg$ does not cross itself, except that its endpoints may coincide;
except for the endpoints, $\zg$ is disjoint from $M$ and
  from the boundary of $S$; and
$\zg$ does not cut out an unpunctured monogon or an unpunctured bigon. 
\end{definition}     

Curves that connect two
marked points and lie entirely on the boundary of $S$ without passing
through a third marked point are \emph{boundary segments}.
Note that boundary segments are not ordinary arcs.

\begin{Def}
[\emph{Crossing numbers and compatibility of ordinary arcs}]
For any two arcs $\zg,\zg'$ in $S$, let $e(\zg,\zg')$ be the minimal
number of crossings of 
arcs $\za$ and $\za'$, where $\za$ 
and $\za'$ range over all arcs isotopic to 
$\zg$ and $\zg'$, respectively.
We say that arcs $\zg$ and $\zg'$ are  \emph{compatible} if $e(\zg,\zg')=0$. 
\end{Def}

\begin{Def}
[\emph{Ideal triangulations}]
An \emph{ideal triangulation} is a maximal collection of
pairwise compatible arcs (together with all boundary segments). 
The arcs of a 
triangulation cut the surface into \emph{ideal triangles}. 
\end{Def}

There are two types of ideal triangles: triangles that have three distinct sides and triangles that have only two. The latter are called \emph{self-folded} triangles.  Note that a self-folded triangle consists of 
an arc $\ell$ whose endpoints coincide, together with an arc $r$ to an enclosed puncture which we dub a 
\emph{radius}.
Following the notation of \cite{GSV-book}, 
we will refer to an arc $\ell$ cutting out a once-punctured monogon as a \emph{noose}.

\begin{Def}
[\emph{Ordinary flips}]
Ideal triangulations are connected to each other by sequences of 
{\it flips}.  Each flip replaces a single arc $\gamma$ 
in a triangulation $T$ by a (unique) arc $\gamma' \neq \gamma$
that, together with the remaining arcs in $T$, forms a new ideal
triangulation.
\end{Def}

In a cluster algebra associated to an unpunctured surface, 
the cluster variables correspond to arcs, the clusters
to triangulations, and the mutations to flips.  However,
in a cluster algebra associated to a surface with punctures,
one needs to generalize the notion of arc and triangulation
in order to get a combinatorial framework that encodes the
whole cluster complex.
In \cite{FST}, the authors 
introduced 
{\it tagged arcs} and \emph{tagged triangulations}, and showed
that  they are in bijection with cluster variables and clusters.

\begin{Def}
[\emph{Tagged arcs}]
A {\it tagged arc} is obtained by taking an arc that is not a noose 
and marking (``tagging") each of its ends in one of two ways, {\it plain} or {\it notched},
so that the following conditions are satisfied:
\begin{itemize}
\item an endpoint lying on the boundary of $S$ must be tagged plain
\item if the endpoints of an arc coincide, then they 
must be tagged in the same way.
\end{itemize}
\end{Def}

\begin{Def}
[\emph{Representing ordinary arcs by tagged arcs}]
One can represent an ordinary arc $\beta$ by 
a tagged arc $\iota(\beta)$ as follows.  If $\beta$ is not a noose, 
then $\iota(\beta)$ is simply $\beta$ with both ends tagged plain.
Otherwise, $\beta$ is a noose based at point $a$,
which contains  the  puncture $b$ inside it.
Let $\alpha$ be the unique arc connecting $a$ and $b$ and compatible
with $\beta$.  Then $\iota(\beta)$ is obtained by tagging $\alpha$ plain at $a$ and notched at $b$.
\end{Def}

\begin{Def}
[\emph{Compatibility of tagged arcs}]  \label{compatible}
Tagged arcs $\alpha$ and
$\beta$ are called {\it compatible} if and only if the following 
properties hold:
\begin{itemize}
\item the arcs $\alpha^0$ and $\beta^0$ obtained from 
   $\alpha$ and $\beta$ by forgetting the taggings are compatible; 
\item if $\alpha^0=\beta^0$ then at least one end of $\alpha$
  must be tagged in the same way as the corresponding end of $\beta$;
\item $\alpha^0\neq \beta^0$ but they share an endpoint $a$, 
 then the ends of $\alpha$ and $\beta$ connecting to $a$ must be tagged in the 
same way.
\end{itemize}
\end{Def}

\begin{Def}
[\emph{Tagged triangulations}]  A maximal (by inclusion) collection
of pairwise compatible tagged arcs is called a {\it tagged triangulation}.
\end{Def}

\subsection{From  surfaces to cluster algebras}
One can  associate an exchange
matrix and
hence a cluster algebra to any bordered surface $(S,M)$
\cite{FST}.

\begin{Def}
[\emph{Signed adjacency matrix of an ideal triangulation}]
\label{adj-matrix}
Choose any ideal triangulation
$T$, and let $\tau_1,\tau_2,\ldots,\tau_n$ be the $n$ arcs of
$T$.
For any triangle $\Delta$ in $T$ which is not self-folded, we define a matrix 
$B^\Delta=(b^\Delta_{ij})_{1\le i\le n, 1\le j\le n}$  as follows.
\begin{itemize}
\item $b_{ij}^\Delta=1$ and $b_{ji}^{\Delta}=-1$ in the following cases:
\begin{itemize}
\item[(a)] $\tau_i$ and $\tau_j$ are sides of 
  $\Delta$ with  $\tau_j$ following $\tau_i$  in the 
  clockwise order;
\item[(b)] $\tau_j$ is a radius in a self-folded triangle enclosed by a noose $\tau_\ell$, and $\tau_i$ and $\tau_\ell$ are sides of 
  $\Delta$ with  $\tau_\ell$ following $\tau_i$  in the 
clockwise order;
\item[(c)] $\tau_i$ is a radius in a self-folded triangle enclosed by a noose $\tau_\ell$, and $\tau_\ell$ and $\tau_j$ are sides of 
  $\Delta$ with  $\tau_j$ following $\tau_\ell$  in the 
clockwise order;
\end{itemize}
\item $b_{ij}^\Delta=0$ otherwise.
\end{itemize}
 
Then define the matrix 
$ B_{T}=(b_{ij})_{1\le i\le n, 1\le j\le n}$  by
$b_{ij}=\sum_\Delta b_{ij}^\Delta$, where the sum is taken over all
triangles in $T$ that are not self-folded. 
\end{Def}

Note that $B_{T}$ is skew-symmetric and each entry  $b_{ij}$ is either
$0,\pm 1$, or $\pm 2$, since every arc $\tau$ is in at most two triangles.

\begin{theorem} \cite[Theorem 7.11]{FST} and \cite[Theorem 5.1]{FT}
\label{clust-surface}
Fix a bordered surface $(S,M)$ and let $\Acal$ be the cluster algebra associated to
the signed adjacency matrix of a tagged triangulation 
(see  \cite[Definition 9.18]{FST}).
Then the (unlabeled) seeds $\Sigma_{T}$ of $\Acal$ are in bijection
with tagged triangulations $T$ of $(S,M)$, and
the cluster variables are  in bijection
with the tagged arcs of $(S,M)$ (so we can denote each by
$x_{\gamma}$, where $\gamma$ is a tagged arc). Moreover, each seed in $\Acal$ is uniquely determined by its cluster.  Furthermore,
if a tagged triangulation $T'$ is obtained from another
tagged triangulation $T$ by flipping a tagged arc $\gamma\in T$
and obtaining $\gamma'$,
then $\Sigma_{T'}$ is obtained from $\Sigma_{T}$ by the seed mutation
replacing $x_{\gamma}$ by $x_{\gamma'}$.
\end{theorem}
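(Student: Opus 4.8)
The plan is to fix an initial tagged triangulation $T_0$, form the cluster algebra $\Acal = \Acal(B_{T_0})$ with initial seed $\Sigma_{T_0}$ (whose cluster variables are indexed by the arcs of $T_0$), and then match tagged triangulations with seeds by transporting $\Sigma_{T_0}$ along flips. The crux is the local claim that if a tagged triangulation $T'$ is obtained from $T$ by flipping a tagged arc $\gamma \in T$ to $\gamma' \notin T$, then the signed adjacency matrices satisfy $B_{T'} = \mu_\gamma(B_T)$, where $\mu_\gamma$ denotes matrix mutation in the direction of $\gamma$. I would prove this by a finite case analysis over the configurations of (tagged) triangles incident to $\gamma$ and $\gamma'$. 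The only delicate configurations are those meeting a self-folded triangle, and it is precisely to make the claim hold there that one works with tagged rather than ordinary arcs and uses the sign conventions of Definition \ref{adj-matrix}; this is the point at which the refinement of \cite{FT} over \cite{FST} is needed.

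Next I would bring in two topological facts about the tagged arc complex of $(S,M)$, whose faces are the sets of pairwise compatible tagged arcs: (i) all tagged triangulations have the same cardinality $n$, and each tagged arc in a tagged triangulation $T$ has a unique flip, so the flip graph on tagged triangulations is $n$-regular; and (ii) this flip graph is connected. For (i), an Euler-characteristic count pins down $n$ and the unique-flip statement is a local topological analysis. For (ii), one passes through ordinary triangulations --- whose flip graph fails to be connected only because the radius of a self-folded triangle cannot be flipped in the ordinary sense --- and checks that the tagged formalism repairs exactly this and moreover allows tags to be changed near a puncture. Combining (i), (ii), and the local claim, one may tentatively assign to each tagged triangulation $T$ a seed $\Sigma_T$ by transporting $\Sigma_{T_0}$ along some path of flips from $T_0$ to $T$; this assignment always takes values among the seeds of $\Acal$, and once it is shown to be independent of the path it surjects onto all seeds (its image is closed under mutation and contains $\Sigma_{T_0}$) and intertwines flips with mutations. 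The bijection of cluster variables with tagged arcs and the flip/mutation statement of the theorem then follow once the assignment is also known to be injective.

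For well-definedness --- independence of the chosen path of flips --- the plan is to use simple connectivity of the tagged flip complex (its fundamental group generated by square and pentagon $2$-cells), reducing to the facts that the composite mutation around a square is trivial (immediate, since the two flips involve disjoint arcs and the corresponding mutations commute) and around a pentagon is trivial (the pentagon identity for mutations, obtained by restricting to the type $A_2$ subsurface carrying that pentagon). For injectivity on clusters --- which also delivers the assertion that a seed is determined by its cluster, since a tagged triangulation is literally its set of tagged arcs --- I would show that distinct tagged arcs $\gamma$ give distinct cluster variables $x_\gamma$, for instance by separating them through their denominator ($d$-)vectors relative to $T_0$, which are controlled by the crossing numbers $e(\gamma,\tau_i)$ with the arcs $\tau_i \in T_0$. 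I expect the main obstacle throughout to be the self-folded/tagged bookkeeping that pervades all three ingredients --- the local matrix claim, the unique-flip statement near punctures, and the denominator-vector computation --- in the degenerate configurations around punctures; handling these uniformly is exactly the contribution of \cite{FST} and \cite{FT}.
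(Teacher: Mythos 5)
The paper does not prove Theorem~\ref{clust-surface}; it is quoted from \cite[Theorem~7.11]{FST} and \cite[Theorem~5.1]{FT}, so there is no internal argument to compare yours against. Your sketch does, however, track the architecture of the cited proofs quite closely: the local flip/mutation compatibility $B_{T'}=\mu_\gamma(B_T)$ with its self-folded case analysis, connectedness and $n$-regularity of the tagged flip graph, and well-definedness of the assignment $T\mapsto\Sigma_T$ by contracting loops in the flip complex through square and pentagon $2$-cells are exactly the ingredients assembled in \cite{FST} and \cite{FT}.

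The one step where you depart from the references --- and where your plan has a real gap --- is injectivity. You propose to separate tagged arcs by their denominator vectors, identifying $d(\gamma)$ with the crossing vector $\bigl(e(\gamma,\tau_i)\bigr)_i$ against the base triangulation $T_0$. That identity is not available to you at this point in the logic: in the present paper it is a \emph{consequence} of the cluster expansion formula of \cite{MSW} (Theorem~\ref{thm MSW}), whose very setup presupposes Theorem~\ref{clust-surface}, and in \cite{FST} the denominator statement was left conjectural precisely because it could not be proved without already knowing the seed/triangulation bijection. What \cite{FT} actually does instead is use decorated Teichm\"uller space: lambda lengths produce a positive point of $\Acal$ at which distinct tagged arcs take distinct values, which gives injectivity of $\gamma\mapsto x_\gamma$ without any knowledge of Laurent expansions or $d$-vectors. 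If you insist on the $d$-vector route you would need an independent proof that $d(\gamma)$ equals the crossing vector, and it is exactly at the self-folded/notched configurations you flag that this is delicate (the denominator pattern of $x_\gamma$ and $x_{\gamma^{(p)}}$ must be distinguished even though the underlying ordinary arcs coincide). Separately, note that the pentagon relation you invoke for well-definedness has to be the pentagon identity for \emph{full seeds with coefficients}, not merely for exchange matrices; that is standard but is an additional input beyond simple connectivity, and together with injectivity it is what upgrades ``the flip complex covers the exchange graph'' to a genuine isomorphism, giving in particular that a seed of $\Acal$ is determined by its cluster.
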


\begin{remark}\label{ordinary-tagged}
By a slight abuse of notation, if $\gamma$ is an ordinary arc
which is not a noose 
(so that the tagged arc $\iota(\gamma)$ is obtained
from $\gamma$ by tagging both ends plain),  we will 
often write $x_{\gamma}$ instead of $x_{\iota(\gamma)}$.
\end{remark}

\begin{remark}
In this paper we will typically fix a triangulation 
$T=(\tau_1,\dots,\tau_n)$ of $(S,M)$.  The initial
cluster variables correspond to the arcs $\tau_i$,
and we will denote them by either
$x_{\tau_i}$ or $x_i$.  Similarly, we will denote
the initial coefficient variables by either
$y_{\tau_i}$ or $y_i$.
\end{remark}

Given a surface $(S,M)$ with a puncture $p$ and a tagged arc $\gamma$,
we let both $\gamma^{(p)}$ and $\gamma^{p}$
denote the arc obtained from $\gamma$
by changing its notching at $p$.  (So if $\gamma$ is not incident to
$p$, $\gamma^{(p)} = \gamma$.)
If $p$ and $q$ are two punctures, we let
$\gamma^{(pq)}$ denote the arc obtained from $\gamma$ by changing
its notching at both $p$ and $q$.
Given a tagged triangulation $T$
of $S$, we let $T^{p}$ denote the tagged triangulation obtained
from $T$ by replacing each $\gamma \in T$ by $\gamma^{(p)}$.

Besides labeling cluster variables of 
$\Acal(B_T)$ by $x_{\tau}$, where $\tau$ is a tagged arc of 
$(S,M)$, we will also make the following conventions:
\begin{itemize}
\item If $\ell$ 
is an unnotched noose with endpoints at $q$ cutting out a once-punctured monogon containing 
puncture $p$ and radius $r$, 
then we set 
$x_{\ell}= 
x_{r}x_{r^{(p)}}$. 
\item If $\beta$ is a boundary
segment, we set $x_{\beta} = 1$.
\end{itemize}

The exchange relation corresponding to a flip in an ideal triangulation
is called 
a {\it generalized Ptolemy relation}.  It can be described as 
follows.
\begin{prop}\cite{FT}\label{Ptolemy}
Let $\alpha, \beta, \gamma, \delta$ be arcs (including nooses) 
or boundary segments 
of $(S,M)$ which cut out a quadrilateral; we assume that the sides
of the quadrilateral, listed in cyclic order, are
$\alpha, \beta, \gamma, \delta$.  Let $\eta$ and $\theta$ 
be the two diagonals of this quadrilateral; see 
Figure \ref{figflip}.
Then 
\begin{equation} \label{shear-exchange} x_{\eta} x_{\theta} = Y x_{\alpha} x_{\gamma} + Y' x_{\beta} x_{\delta}\end{equation}
for some coefficients $Y$ and $Y'$.
\end{prop}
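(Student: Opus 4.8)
\textbf{Proof strategy for Proposition \ref{Ptolemy}.}
The plan is to reduce the generalized Ptolemy relation to the ordinary exchange relation of the cluster algebra $\Acal(B_T)$, using Theorem \ref{clust-surface} to translate between flips of tagged triangulations and seed mutations. First I would complete the quadrilateral with sides $\alpha,\beta,\gamma,\delta$ and diagonals $\eta,\theta$ to a tagged triangulation $T$ containing $\eta$ (for instance by choosing any tagged triangulation restricting to this quadrilateral), and observe that flipping $\eta$ produces the triangulation $T'$ containing $\theta$ instead. By Theorem \ref{clust-surface}, the seed mutation $\mu$ replacing $x_\eta$ by $x_\theta$ is governed by the column of the signed adjacency matrix $B_T$ indexed by $\eta$. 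Reading off that column from Definition \ref{adj-matrix}: the two triangles of $T$ adjacent to $\eta$ contribute, so that the positive part of the exchange monomial is the product of the $x$-variables of the sides of the quadrilateral that follow $\eta$ in clockwise order in their respective triangles --- namely $x_\alpha x_\gamma$ (one side from each triangle) --- while the negative part is $x_\beta x_\delta$. The exchange relation then reads $x_\eta x_\theta = \hat{y}\, x_\alpha x_\gamma + x_\beta x_\delta$ up to an overall coefficient, which after clearing denominators in the coefficient semifield gives \eqref{shear-exchange} with the stated $Y,Y'$.

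Several bookkeeping points require care. When one of $\alpha,\beta,\gamma,\delta$ is a boundary segment, its $x$-variable is $1$ by convention, so the relation still makes sense and the argument is unaffected, since boundary segments simply do not contribute to $B_T$. When the quadrilateral degenerates --- e.g. two of the sides coincide, or the ``quadrilateral'' is a once-punctured digon so that $\eta$ and $\theta$ differ by a re-tagging rather than an honest flip --- one must check the statement separately; here the conventions $x_\ell = x_r x_{r^{(p)}}$ for a noose and the rules (b),(c) of Definition \ref{adj-matrix} for radii in self-folded triangles are exactly what make the bookkeeping work out, and this case can be handled by a direct computation in the relevant small surface (a once-punctured digon or triangle). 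I would also need to note that the coefficients $Y,Y'$ depend only on the local picture and the chosen seed $T$, not on how the quadrilateral was completed to a triangulation; this follows from the fact that mutation of coefficients is local to the mutated vertex.

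\textbf{Main obstacle.} The genuinely delicate part is \emph{not} the generic case --- that is a direct unwinding of Definition \ref{adj-matrix} and Theorem \ref{clust-surface} --- but rather verifying that the uniform formula \eqref{shear-exchange} persists through all the degenerate configurations involving self-folded triangles, nooses, and punctured digons, where ``flip'' must be interpreted in the tagged sense and where the $x$-variables of nooses are defined as products. I expect to dispatch these by appealing to \cite{FT}, where the generalized Ptolemy relations are established in the lambda-length model of decorated Teichm\"uller space; alternatively one checks the handful of degenerate local models by hand. Since the proposition is attributed to \cite{FT}, the cleanest route is to cite that work for the degenerate cases and give the matrix-mutation derivation above for the generic one, which is all that is needed for the applications in later sections.
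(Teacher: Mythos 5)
Your proposal is correct in outline but takes a genuinely different route from the paper. The paper's proof of Proposition \ref{Ptolemy} is a one-line citation: it invokes the interpretation of cluster variables as lambda lengths together with the Ptolemy relation for lambda lengths, namely \cite[Theorem~7.5 and Proposition~6.5]{FT}. This handles all configurations (boundary segments, glued sides, nooses, tagged flips in a punctured digon) uniformly, because the hyperbolic-geometry Ptolemy identity makes no distinction between generic and degenerate quadrilaterals.

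You instead derive the generic case internally from the cluster structure: complete the quadrilateral to a (tagged) triangulation containing $\eta$, apply Theorem \ref{clust-surface} to turn the flip $\eta\leftrightarrow\theta$ into a seed mutation, and read off the exchange monomials from the $\eta$-column of $B_T$ via Definition \ref{adj-matrix}. That derivation is sound, and the signs in Definition \ref{adj-matrix} do sort the four sides into the pairs $\{\alpha,\gamma\}$ and $\{\beta,\delta\}$ as you claim. What this route buys is an explicit link between the Ptolemy relation and cluster mutation, which is good pedagogy. What it costs is that it is not self-contained for the degenerate configurations the proposition explicitly allows (nooses among $\alpha,\dots,\delta$, sides identified to one another, once-punctured digons where $\theta$ is a tagged rather than ordinary flip of $\eta$); you correctly identify these as the delicate part and end up citing \cite{FT} for them anyway. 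One might also note that Theorem \ref{clust-surface} itself is a theorem of \cite{FST,FT}, so your argument does not really use less of that machinery — it just points to a different piece of it ([FT, Thm.~5.1] rather than [FT, Thm.~7.5, Prop.~6.5]). As a proof of the proposition as stated, your plan works; as a replacement for the paper's proof, it trades a uniform citation for a mixed derivation-plus-citation, which is a reasonable pedagogical choice but not a simplification.
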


\begin{proof}
This follows from the interpretation of cluster variables as 
{\it lambda lengths}  and the 
Ptolemy relations for lambda lengths \cite[Theorem 7.5 and Proposition 6.5]{FT}.
\end{proof}

Note that 
some sides of the quadrilateral in Proposition \ref{Ptolemy}
may be glued to each other, changing the appearance of the relation.
There are also generalized Ptolemy relations for tagged triangulations,
see \cite[Definition 7.4]{FT}.
\begin{figure} \begin{center}
\scalebox{.8}{\input{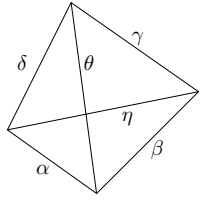}}
\end{center}
\caption{Quadrilateral illustrating Proposition \ref{Ptolemy}}
\label{figflip}
\end{figure}

\subsection{Principal coefficients}

In this paper we work with cluster algebras 
$\A = \Aprin(B_T)$ with \emph{principal coefficients}
with respect to the seed $\Sigma_T$.  Concretely,
these are defined by using a $2n \times n$ extended 
exchange matrix whose top $n \times n$ part is 
$B_T$, and whose bottom $n \times n$ part is the
identity matrix.  See \cite{FZ4} for more details.
When $B_T$ comes from a triangulation of a bordered
surface $(S,M)$, one can compute the coefficients using 
Thurston's theory of measured laminations; see 
\cite{FT} and also \cite{FG3}.  Concretely, one can 
compute principal coefficients with respect to 
the seed $\Sigma_T$ (where $T$ is a tagged triangulation)
using the \emph{shear coordinates} with respect to 
the $n$ \emph{elementary laminations} associated to the 
$n$ tagged arcs of $T$ \cite[Definition 16.2]{FT}.

For a cluster algebra $\mathcal{A}$ with exchange matrix $B_T$ and an arbitrary semifield $\PP$ 
of coefficients, Laurent expansions of cluster variables can be computed from the formula in $\Aprin(B,T)$ 
by the following theorem.

\begin{theorem}\cite[Theorem 3.7]{FZ4}
\label{th:reduction-principal}
Let $\Acal$ be a cluster algebra over an arbitrary semifield $\PP$
and contained in the ambient field $\Fcal$,
with a seed at an initial vertex $t_0$ given by
$$((x_1, \dots, x_n), (y_1^*, \dots, y_n^*), B^0).$$
Then the cluster variables in~$\Acal$ can be expressed as follows:
\begin{equation}
\label{eq:xjt-reduction-principal}
x_{\ell;t} = \frac{X_{\ell;t}^{B^0;t_0}|_\Fcal (x_1, \dots, x_n;y_1^*, \dots, y_n^*)}
{F_{\ell;t}^{B^0;t_0}|_\PP (y_1^*, \dots, y_n^*)} \, .
\end{equation}
\end{theorem}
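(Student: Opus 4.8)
The plan is to prove \eqref{eq:xjt-reduction-principal} by induction on the distance $d(t_0,t)$ in the $n$-regular tree $\TT_n$ indexing the seeds, running in parallel the mutation recursions for the cluster variables $x_{\ell;t}$ in $\Acal$, for the principal-coefficient expressions $X_{\ell;t}^{B^0;t_0}$ in $\Fcal$, and for the $F$-polynomials $F_{\ell;t}^{B^0;t_0}$ together with the coefficients $y_{j;t}$. The engine of the argument is the identity, checked directly from the exchange relations, that the elements $\hat y_j = y_j^*\prod_i x_i^{b_{ij}^0}\in\Fcal$ mutate by exactly the same formulas as coefficient $y$-variables mutate in a tropical semifield; this is what lets the computation performed with principal coefficients serve as a ``universal numerator'' for every choice of $\PP$.

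For the base case $t=t_0$ both sides equal $x_\ell$, since $X_{\ell;t_0}^{B^0;t_0}=x_\ell$ and $F_{\ell;t_0}^{B^0;t_0}=1$. For the inductive step, let $t$ and $t'$ be joined by a mutation in direction $k$ and assume \eqref{eq:xjt-reduction-principal} at $t$ for all $\ell$; only $x_{k;t'}$ needs attention. I would write down the three exchange relations along this edge: the relation for $x_{k;t}x_{k;t'}$ in $\Acal$ over $\PP$ (whose right-hand side is a $\PP$-combination of the two cluster monomials); the analogous relation for $X_{k;t}^{B^0;t_0}X_{k;t'}^{B^0;t_0}$ in $\Fcal$ (whose binomial right-hand side is governed by the $\hat y_j$); and the $F$-polynomial relation $F_{k;t}^{B^0;t_0}F_{k;t'}^{B^0;t_0}=(\text{polynomial in }y)$. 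Substituting the inductive hypothesis $x_{i;t}=X_{i;t}^{B^0;t_0}/F_{i;t}^{B^0;t_0}|_\PP$ into the first relation, clearing the $F|_\PP$ denominators with the third, and comparing term by term with the second relation should yield precisely $x_{k;t'}=X_{k;t'}^{B^0;t_0}|_\Fcal/F_{k;t'}^{B^0;t_0}|_\PP$, provided the coefficient data match up: the tropical $y$-part of the $\Acal$-exchange relation must combine with the $F|_\PP$-factors produced by the substitution to reassemble exactly $F_{k;t'}^{B^0;t_0}|_\PP$ in the denominator and $1$ in the numerator. This last matching is where the $\hat y_j$-mutation rule is invoked.

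The step I expect to be the real obstacle is this bookkeeping comparison, and in particular the need to know that specializing an $F$-polynomial into $\PP$ is harmless: one must know that every $F_{\ell;t}^{B^0;t_0}$ has constant term $1$ and is not divisible by any $y_j$, so that $F_{\ell;t}^{B^0;t_0}|_\PP$ is a well-defined, ``nonzero'' element of $\PP$ that enters the recursion correctly, and these facts have to be carried along in the same induction. A structurally cleaner alternative is to establish the identity first in the universal case $\Acal=\Aprin(B^0)$ over $\Trop(y_1,\dots,y_n)$ — where it is essentially the definition of the $F$-polynomial together with the separation-of-additions formula — and then specialize along the semifield homomorphism $\Trop(y_1,\dots,y_n)\to\PP$, $y_j\mapsto y_j^*$; there the obstacle becomes verifying that this specialization is compatible with the full exchange pattern and does not collapse any coefficient. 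Either way, the delicate point is controlling the $F$-polynomial denominators under specialization; this is carried out in \cite[Theorem 3.7]{FZ4}.
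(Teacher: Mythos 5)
The paper quotes this statement from \cite{FZ4} and offers no proof of its own, so the comparison is to the proof there. Your primary route --- induction on $d(t_0,t)$, running the $\Acal$-exchange relation in parallel with the principal-coefficient exchange relation and the $F$-polynomial/coefficient recursion, with the observation that the $\hat y_j = y_j^*\prod_i x_i^{b^0_{ij}}$ mutate like coefficient variables as the engine --- is exactly the architecture used in FZ4, and as a sketch it is correct.

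Two caveats. First, the obstacle you single out is misplaced: for $F_{\ell;t}^{B^0;t_0}|_\PP(y^*)$ to be a well-defined nonzero element of the semifield, one only needs $F_{\ell;t}^{B^0;t_0}$ to be a nonzero subtraction-free expression, which is automatic because the exchange recursion itself is subtraction-free. You do \emph{not} need constant term $1$ or non-divisibility by the $y_j$; in fact non-divisibility is a \emph{consequence} of Theorem~\ref{th:reduction-principal}, obtained by taking $\PP = \Trop(y_1,\dots,y_n)$ (so $\Acal=\Aprin$, $x_{\ell;t}=X_{\ell;t}$, forcing $F_{\ell;t}|_{\Trop}=1$), and constant term $1$ is a separate, harder conjecture. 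Assuming either as a prerequisite would be circular. Second, the proposed ``cleaner alternative'' is not actually available: the assignment $y_j\mapsto y_j^*$ does not in general extend to a semifield homomorphism $\Trop(y_1,\dots,y_n)\to\PP$, because it need not respect $\oplus$ (e.g.\ $1\oplus y_1=1$ in $\Trop$, while $1\oplus y_1^*$ in an arbitrary $\PP$ is not $1$). So one cannot literally ``specialize along'' such a map; the point of the separation formula is precisely that the $\PP$-recursion and the principal-coefficient recursion track each other even though no such homomorphism exists, and that is what the induction is designed to establish.
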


An important class of semifields $\PP$ are the \emph{tropical semifields}.

\begin{Def}
[\emph{Tropical semifield}]
Let $\Trop (u_1, \dots, u_{m})$ be an abelian group (written
multiplicatively) freely generated by the $u_j$.
We define  $\oplus$ in $\Trop (u_1,\dots, u_{m})$ by
\begin{equation}
\prod_j u_j^{a_j} \oplus \prod_j u_j^{b_j} =
\prod_j u_j^{\min (a_j, b_j)} \,,
\end{equation}
and call $(\Trop (u_1,\dots,u_{m}),\oplus,\cdot)$ a \emph{tropical
 semifield}.
\end{Def}

A cluster algebra is of \emph{geometric type} whenever $\PP$ is such a semifield.
Notice that in this case,  the denominator of equation
(\ref{eq:xjt-reduction-principal}) is a monomial.

\section{Matching formulas for generalized arcs and closed loops} 
\label{sect main}

In this section we fix a bordered surface $(S,M)$, an ideal triangulation 
$T=(\tau_1,\dots,\tau_n)$, 
and the cluster algebra $\A = \Aprin(B_T)$ with principal coefficients
with respect to the seed $\Sigma_T$.  We will explain how to associate 
an element $X_{\gamma}^T$ of (the fraction field of) 
$\A$ to each generalized arc or closed loop $\gamma$ in $(S,M)$.
Our definition of the cluster algebra element associated to a  closed loop comes from joint work of the authors together with  Ralf Schiffler \cite{MSW2}.

Each element will be defined as a sum over matchings of a graph.
When $\gamma$ is an ordinary arc, $X_{\gamma}^T$ recovers the cluster 
expansion formula for the cluster variable associated to $\gamma$, with respect to $\Sigma_T$ \cite{MSW}.

\begin{definition} [\emph{Generalized arcs}] 
\label{gen-arc}
A \emph{generalized arc}  in $(S,M)$ is a curve $\gamma$ in $S$ such that:
the endpoints of $\gamma$ are in $M$; except for the endpoints,
$\gamma$ is disjoint from $M$ and the boundary of $S$; 
$\gamma$ does not cut out an unpunctured monogon or an unpunctured bigon.
Note that we allow a generalized arc to cross itself a finite 
number of times.  We consider generalized 
arcs up to isotopy (of immersed
arcs).  
In particular, an isotopy cannot remove a contractible
kink from a generalized arc.
\end{definition}

\begin{definition} [\emph{Closed loops}]
A closed loop in $(S,M)$ is a closed curve
$\gamma$ in $S$ which is disjoint from $M$ and the 
boundary of $S$.  We allow a closed loop to have a finite
number of self-intersections.
As in Definition \ref{gen-arc}, we consider closed
loops up to isotopy.
\end{definition}

\begin{definition}
A closed loop in $(S,M)$ is called \emph{essential} if:
\begin{itemize}
\item it is not contractible nor 
contractible onto a single puncture;
\item it does not have self-intersections.
\end{itemize}
\end{definition}

\subsection{Tiles} \label{sect tiles}
Let 
$\zg$ be a generalized  arc in $(S,M)$ which is not in $T$. 
Choose an orientation on $\zg$, let $s\in M$ be its starting point, and let $t\in M$ be its endpoint.
We denote by
$s=p_0, p_1, p_2, \ldots, p_{d+1}=t$
the points of intersection of $\zg$ and $T$ in order.  
Let $\tau_{i_j}$ be the arc of $T$ containing $p_j$, and let 
$\zD_{j-1}$ and 
$\zD_{j}$ be the two ideal triangles in $T$ 
on either side of 
$\tau_{i_j}$. 

To each $p_j$ we associate a  \emph{tile} $G_j$,  
an edge-labeled triangulated quadrilateral (see the right-hand-side of Figure \ref{digoncross}), 
which is defined to be
the union of two edge-labeled triangles $\zD_1^j$ and $\zD_2^j$ glued at an edge labeled $\tau_{i_j}$. 
The triangles $\zD_1^j$ and $\zD_2^j$ are determined by  
$\zD_{j-1}$ and 
$\zD_{j}$ as follows.

If neither $\zD_{j-1}$ nor $\zD_{j}$ is self-folded, then they each have three distinct sides 
(though possibly fewer than three vertices), and we define $\zD_1^j$ and $\zD_2^j$ to be the ordinary triangles with edges labeled as in $\zD_{j-1}$ and $\zD_{j}$.  We glue $\zD_1^j$ and $\zD_2^j$ at the edge labeled $\tau_{i_j}$, so that the orientations of $\zD_1^j$ and $\zD_2^j$ both either agree or disagree with those of $\zD_{j-1}$ and $\zD_j$; this gives two possible planar embeddings
of a graph $G_j$ which we call an \emph{ordinary tile}.
\begin{figure}
\input{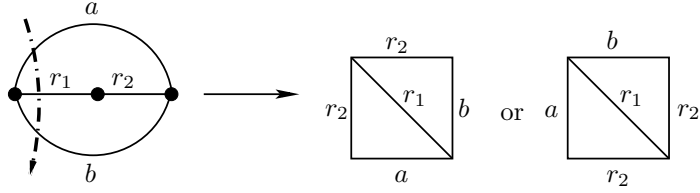}
\caption{Possible tiles corresponding to crossing radius of a bigon}
\label{digoncross}
\end{figure}

If one of $\zD_{j-1}$ or $\zD_{j}$ is  self-folded, then in fact $T$ must have a local configuration of a bigon (with sides $a$ and $b$) containing a radius $r$ incident to a puncture $p$ inscribed inside a loop $\ell$, see Figure \ref{triptile}.
If $\zg$ has no self-intersection as it passes
through the self-folded triangle, then $\zg$ must either

\begin{itemize}
\item[(1)] intersect the loop $\ell$ and terminate at puncture $p$, or

\item[(2)] intersect the loop $\ell$, radius $r$ and then $\ell$ again.
\end{itemize}

In case (1), we associate to $p_j$ (the intersection point with  $\ell$) 
an {\it ordinary tile} $G_j$ consisting of a triangle with sides $\{a,b,\ell\}$ which 
is glued along diagonal $\ell$ to a triangle with sides $\{\ell,r,r\}$.
As before there are two possible planar embeddings of $G_j$.

In case (2), we associate to the triple of intersection points 
$p_{j-1}, p_j, p_{j+1}$ a union of tiles $G_{j-1} \cup G_j \cup G_{j+1}$,
which we call a \emph{triple tile},
based on whether $\zg$ enters and exits
through different sides of the bigon or through the same side.
These graphs are defined by the first three examples in 
Figure \ref{triptile} (each possibility
is denoted in boldface within a concatenation of five tiles).  Note that 
in each case there are two possible planar embeddings of the triple tile.
We call
the tiles $G_{j-1}$ and $G_{j+1}$ within the triple tile {\it ordinary tiles}.

On the other hand, if $\zg$ has a self-intersection inside the self-folded triangle,
then the local configuration in the associated graph is as in the last
two examples of Figure \ref{triptile}.

\begin{figure}
\input{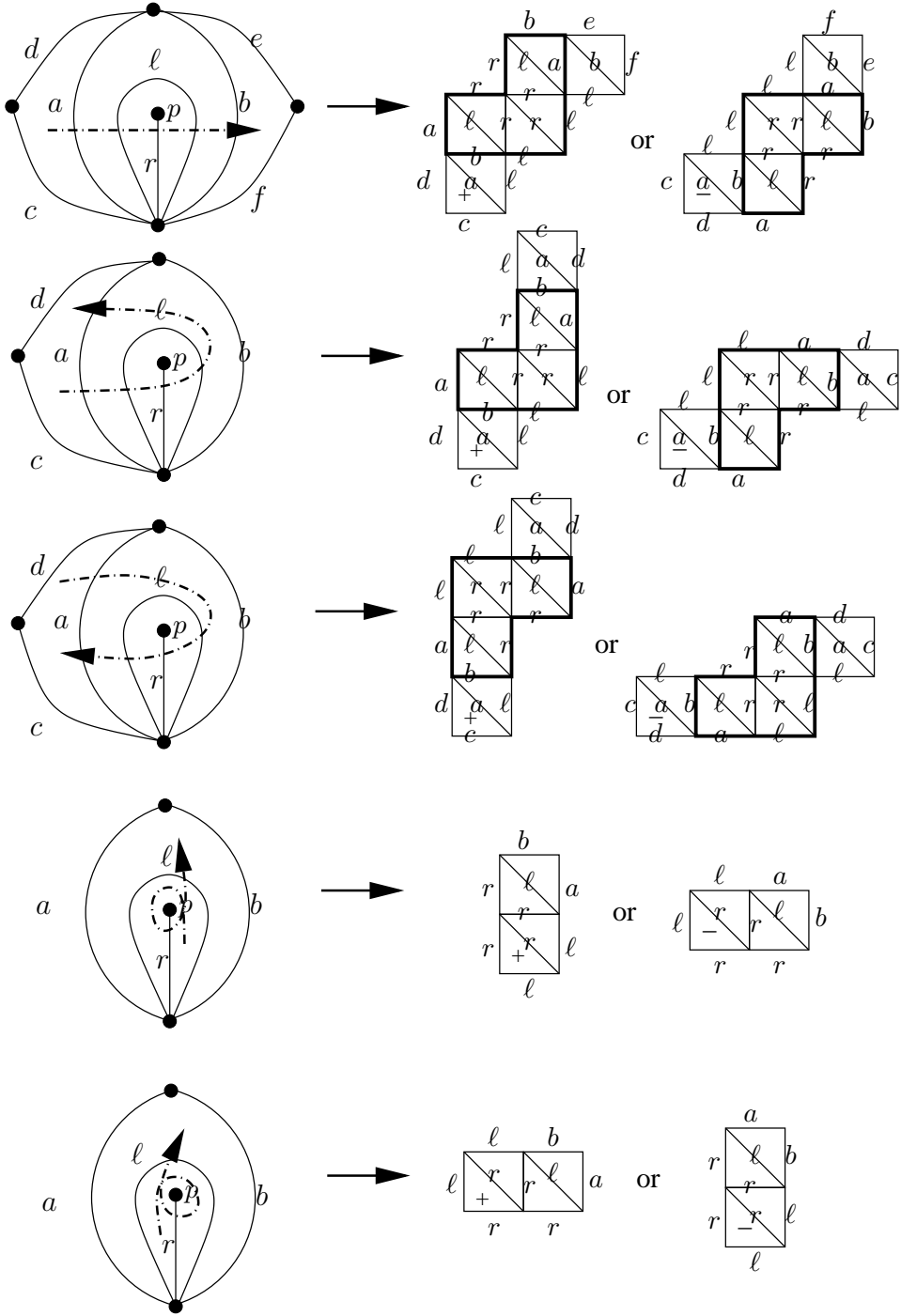}
\caption{Possible triple tiles for crossing a self-folded triangle}
\label{triptile}
\end{figure}

\begin{definition}[\emph{Relative orientation}] \label{def:relative-orientation}  
Given a planar embedding $\tilde G_j$ 
of an ordinary tile $G_j$, we define the \emph{relative orientation} 
$\mathrm{rel}(\tilde G_j, T)$ 
of $\tilde G_j$ with respect to $T$ 
to be $\pm 1$, based on whether its triangles agree or disagree in orientation with those of $T$.  
\end{definition}
Note that in Figure \ref{triptile}, the lowest tile in each of the five graphs in the middle
(respectively, rightmost) column has relative orientation $+1$ (respectively, $-1$),
as indicated by the signs in the figures.
Also note that 
by construction, the planar embedding of a triple tile $\tilde G_{j-1} \cup \tilde G_j \cup \tilde G_{j+1}$ satisfies $\mathrm{rel}(\tilde G_{j-1},T) = \mathrm{rel}(\tilde G_{j+1},T)$.

\begin{definition}
Using the notation above, 
the arcs $\tau_{i_j}$ and $\tau_{i_{j+1}}$ form two edges of a triangle $\zD_j$ in $T$.  Define $a_j$ to be the third arc in this triangle if $\zD_j$ is not self-folded, and to be the radius in $\zD_j$ otherwise.
\end{definition}

\subsection{The snake graph ${G}_{T,\zg}$}\label{sect graph}

We now recursively glue together the tiles $G_1,\dots,G_d$
in order from $1$ to $d$, subject to the following conditions.
\begin{figure}
\scalebox{0.7}{
\input{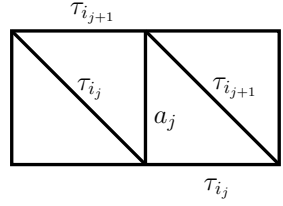}}
\caption{Glueing tiles $\tilde G_j$ and $\tilde G_{j+1}$ along the edge labeled  $a_j$}
\label{figglue}
\end{figure}
\begin{enumerate}
\item Triple tiles must stay glued together as  in Figure \ref{triptile}.
\item For two adjacent ordinary tiles, 
each of which may be an exterior tile of a triple tile, 
we glue  $G_{j+1}$ to $\tilde G_j$ along the edges 
labeled $a_j$, choosing a planar embedding $\tilde G_{j+1}$ for $G_{j+1}$
so that $\mathrm{rel}(\tilde G_{j+1},T) \not= \mathrm{rel}(\tilde G_j,T).$  See Figure \ref{figglue}.
\end{enumerate}

After glueing together the $d$ tiles, we obtain a graph (embedded in 
the plane),
which we denote 
$\overline{G}_{T,\zg}$.  Let $G_{T,\zg}$ be the graph obtained 
from $\overline{G}_{T,\zg}$ by removing the diagonal in each tile. We call $G_{T,\zg}$ the \emph{snake graph} associated to $\gamma$ with respect to $T$.
Figure \ref{triptile} gives examples of a 
dotted arc $\gamma$ and the corresponding graph 
$\overline{G}_{T,\zg}$.  Each $\gamma$ intersects $T$
five times,  so each 
$\overline{G}_{T,\zg}$ has five tiles.

\subsection{Definition of cluster algebra elements associated to generalized  arcs}\label{sect cluster expansion formula}

Recall that if $\tau$ is a boundary segment then $x_{\tau} = 1$,
and if $\tau$ is a noose  cutting out a once-punctured
monogon with radius $r$ and puncture $p$, then $x_{\tau} = x_r x_{r^{(p)}}$.

\begin{definition} [\emph{Crossing Monomial}]
If $\zg$ is an ordinary arc and  $\tau_{i_1}, \tau_{i_2},\dots, \tau_{i_d}$
is the sequence of arcs in $T$ which $\zg$ crosses, we define the \emph{crossing monomial} of $\gamma$ with respect to $T$ to be
$$\mathrm{cross}(T, \gamma) = \prod_{j=1}^d x_{\tau_{i_j}}.$$
\end{definition}

\begin{definition} [\emph{Perfect matchings and weights}]\label{def:perfect}
A \emph{perfect matching} of a graph $G$ is a subset $P$ of the 
edges of $G$ such that
each vertex of $G$ is incident to exactly one edge of $P$. 
If the edges of  a perfect matching $P$ of  
$G_{T,\zg}$ are labeled $\tau_{j_1},\dots,\tau_{j_r}$, then 
we define the {\it weight} $x(P)$ of $P$ to be 
$x_{\tau_{j_1}} \dots x_{\tau_{j_r}}$.
\end{definition}

\begin{definition} [\emph{Minimal and Maximal Matchings}]  
By induction on the number of tiles it is easy to see that 
$G_{T,\zg}$  
has  precisely two perfect matchings which we call
the {\it minimal matching} $P_-=P_-(G_{T,\zg})$ and 
the {\it maximal matching} $P_+
=P_+(G_{T,\zg})$, 
which contain only boundary edges.
To distinguish them, 
if  $\mathrm{rel}(\tilde G_1,T)=1$ (respectively, $-1$),
we define 
$e_1$ and $e_2$ to be the two edges of 
$\overline{G}_{T,\zg}$ which lie in the counter-clockwise 
(respectively, clockwise) direction from 
the diagonal of $\tilde G_1$.  Then  $P_-$ is defined as
the unique matching which contains only boundary 
edges and does not contain edges $e_1$ or $e_2$.  $P_+$
is the other matching with only boundary edges.
\end{definition}

For an arbitrary perfect matching $P$ of $G_{T,\gamma}$, we let
$P_-\ominus P$ denote the symmetric difference, defined as $P_-\ominus P =(P_-\cup P)\setminus (P_-\cap P)$.

\begin{lemma}\cite[Theorem 5.1]{MS}
\label{thm y}
The set $P_-\ominus P$ is the set of boundary edges of a 
(possibly disconnected) subgraph $G_P$ of $G_{T,\zg}$, which is a union of 
cycles.  These cycles enclose a set of tiles 
$\cup_{j\in J} G_{i_j}$,  where $J$ is a finite index set.
\end{lemma}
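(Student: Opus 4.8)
The plan is to split the claim into an elementary statement about symmetric differences of perfect matchings and a topological statement about the planar embedding of the snake graph. First I would show that $G_P$, the subgraph of $G_{T,\gamma}$ with edge set $P_-\ominus P$, is a disjoint union of simple cycles. This is the standard ``symmetric difference of two matchings'' argument, which uses nothing about $P_-$ and $P$ beyond their being perfect matchings: every vertex $v$ of $G_{T,\gamma}$ lies on exactly one edge of $P_-$ and on exactly one edge of $P$; if these two edges coincide then $v$ meets no edge of $P_-\ominus P$, and if they are distinct then $v$ meets exactly two edges of $P_-\ominus P$. Hence every vertex of $G_P$ has degree $0$ or $2$, so $G_P$ is a disjoint union of simple cycles, and along each of these cycles the edges alternate between $P_-$ and $P$, so each cycle has even length. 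This is the first assertion.

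Next I would bring in planarity. By the construction of Section \ref{sect graph}, the snake graph $G_{T,\gamma}$ (namely $\overline{G}_{T,\gamma}$ with the diagonal of each tile deleted) is drawn in the plane as a connected chain of quadrilateral tiles glued along edges, and -- inspecting the gluing rules case by case, including the triple tiles and the turns of the staircase -- the bounded faces of this plane graph are exactly the $d$ tiles $G_1,\dots,G_d$, with no further bounded face arising. Granting this, the $\Z/2\Z$-cycle space of $G_{T,\gamma}$ has the $d$ tile boundaries $\partial G_1,\dots,\partial G_d$ as a basis. Since every vertex of $G_P$ has even degree, $P_-\ominus P$ lies in this cycle space, so $P_-\ominus P=\sum_{j\in J}\partial G_j$ for a unique subset $J\subseteq\{1,\dots,d\}$. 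Reading this off geometrically: an edge of $G_{T,\gamma}$ belongs to $P_-\ominus P$ precisely when exactly one of its two adjacent faces is a tile $G_j$ with $j\in J$ (for an edge on the outer boundary, one of the two adjacent faces is the unbounded face, which is never one of the $G_j$). Equivalently, $P_-\ominus P$ is the set of boundary edges of the planar region $\bigcup_{j\in J}G_j$, which is a union of tiles, and the simple cycles making up $G_P$ are exactly the boundary curves of this region; so these cycles enclose the set of tiles $\bigcup_{j\in J}G_j$, as claimed.

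I expect the main obstacle to be exactly the one topological input used above, namely the verification -- from the gluing rules of Section \ref{sect graph} -- that the plane graph $G_{T,\gamma}$ has precisely its $d$ tiles as bounded faces. The delicate cases are the triple tiles and the staircase turns, where one must check that gluing the next tile neither overlaps an earlier tile nor creates a spurious bounded region. Once this is in hand, everything else is formal: the matching argument of the first paragraph and the standard planar cycle-space fact together yield both halves of the lemma.
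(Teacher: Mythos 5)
The paper does not actually prove this lemma; it cites \cite[Theorem 5.1]{MS}, so there is no in-paper proof to compare against. Your argument is correct and is the natural one. The symmetric-difference step is standard, and the cycle-space step works: for a connected plane graph, the boundaries of the bounded faces form a $\mathbb{Z}/2\mathbb{Z}$-basis of the cycle space (Euler's formula gives the dimension count, and the face boundaries satisfy exactly the one relation ``sum of all face boundaries $=0$''), so any even subgraph is uniquely a sum $\sum_{j\in J}\partial G_j$.

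The geometric input you flag --- that the plane embedding of $G_{T,\gamma}$ has precisely the $d$ tiles as its bounded faces --- is indeed the only non-formal step, and it does hold by the gluing rules of Section~\ref{sect graph}: tiles are attached edge-to-edge into a non-self-overlapping chain, and the triple-tile configurations of Figure~\ref{triptile} are just local three-tile staircases that preserve this. One small additional point worth noting, to justify the word ``enclose'': since the dual graph on tiles is a path, any $J\subseteq\{1,\dots,d\}$ decomposes into maximal intervals, each of which is a simply connected disk of consecutive tiles; so the cycles of $G_P$ are exactly the boundary curves of these disks, pairwise disjoint and non-nested, and each one genuinely bounds the corresponding block of tiles.
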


We use this decomposition to define \emph{height monomials} for
perfect matchings.
Note that the exponents in the height monomials defined below coincide
with the definition of height functions given in
\cite{ProppLattice} for perfect matchings of bipartite
graphs, based on earlier work of \cite{ConwayLagarias}, 
\cite{EKLP}, and \cite{Thurston} for domino tilings. 

\begin{definition} [\emph{Height Monomial and Specialized Height Monomial}] \label{height} Let $T = \{\tau_1,\tau_2,\dots, \tau_n\}$ be an ideal triangulation of $(S,M)$ and $\gamma$ be an ordinary arc of $(S,M)$.  By Lemma \ref{thm y}, for any perfect matching $P$
of $G_{T,\zg}$, $P \ominus P_-$ encloses the union of tiles $\cup_{j\in J} G_{i_j}$.  
We  define the \emph{height monomial} $h(P)$ of $P$ by
\begin{equation*}
h(P) = \prod_{k=1}^n \Y_{\tau_{k}}^{m_k},
\end{equation*}
where $m_k$ is the number of tiles in $\cup_{j\in J} G_{i_j}$ whose
diagonal is labeled $\tau_k$.

We  define the \emph{specialized height monomial} $y(P)$ of $P$ to be
the specialization $\Phi(h(P))$, where $\Phi$ is defined below. 
\begin{eqnarray*} 
\label{eqn yspec} \Phi({\Y}_{\tau_i}) &=& \left\{\begin{array}{ll}
y_{\tau_i}\
&\textup{if $\tau_i$ is not a side of a self-folded triangle;}\\ \\
\dfrac{y_{r}}{y_{r^{(p)}}}
&\textup{if $\tau_i$ is a radius $r$ to puncture $p$ in a self-folded triangle;}\\ \\
y_{r^{(p)}}
&\textup{if $\tau_i$ is a noose in a self-folded triangle with radius $r$ to puncture $p$.}
\end{array}\right.
\end{eqnarray*}
\end{definition}

\begin{definition}\label{e_p}
For an arc $\tau \in T$ and a puncture $p$,
let $e_p(\tau)$ denote the number of ends of $\tau$ incident to $p$ 
(so if both ends of $\tau$ 
are at $p$, $e_p(\tau) = 2$).
Additionally, if $\gamma$ is a generalized arc or loop, then 
$e(\tau,\gamma)$ denotes the number of crossings between $\tau$ and  $\gamma$.
\end{definition}

\begin{definition} \label{def:matching}
Let $(S,M)$ be a surface, $T=(\tau_1,\dots,\tau_n)$ an ideal triangulation,
and $\A = \Aprin(B_T)$ be the cluster algebra with principal coefficients
with respect to $\Sigma_T$.
Let $\zg$ 
be a generalized arc and let $G_{T,\zg}$ denote its
snake graph.  
We will define a Laurent polynomial $X_\zg^T$ 
which lies in (the fraction field of) $\A$, as well as a Laurent polynomial 
$F_\zg^T$ obtained from $X_{\gamma}^T$ by specialization.
\begin{enumerate}
\item If $\gamma$ 
cuts out a contractible monogon, then $X_\gamma^T$ 
is equal to zero.     
\item If $\zg$ 
has a contractible kink, let $\overline{\zg}$ denote the 
corresponding tagged arc with this kink removed, and define 
$X_{\zg}^T = (-1) X_{\overline{\zg}}^T$.  
\item Otherwise, define
\[ X_{\gamma}^T = \frac{1}{\mathrm{cross}(T,\zg)} \sum_P 
x(P) y(P),\]
 where the sum is over all perfect matchings $P$ of $G_{T,\zg}$.
\end{enumerate}
Define $F_{\gamma}^T$ to be the Laurent polynomial obtained from 
$X_{\gamma}^T$ by specializing all the $x_{\tau_i}$ to $1$.
\end{definition}

\begin{theorem}\cite[Thm 4.9]{MSW}
\label{thm MSW}
Use the notation of Definition \ref{def:matching}.
When $\gamma$ is an arc (with no self-intersections), 
$X_{\gamma}^T$ 
is the Laurent expansion of the cluster variable $x_{\gamma}\in \A$,
with respect to the seed $\Sigma_T$,
and $F_{\gamma}^T$ is its \emph{F-polynomial}.
\end{theorem}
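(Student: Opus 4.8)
The plan is to prove the identity by induction on the number $d$ of points in which $\zg$ meets the arcs of $T$ (the points $p_1,\dots,p_d$ from Section~\ref{sect tiles}), using the generalized Ptolemy relation of Proposition~\ref{Ptolemy} as the engine. Before starting the induction there are a few reductions. By Theorem~\ref{th:reduction-principal} it suffices to treat the principal-coefficient case $\A=\Aprin(B_T)$, which is the one at hand; and once $X_\zg^T$ is identified with the Laurent expansion of $x_\zg$ with respect to $\Sigma_T$, the statement about $F_\zg^T$ is automatic, since the $F$-polynomial of a cluster variable is by definition the specialization of its principal-coefficient Laurent expansion at $x_{\tau_1}=\dots=x_{\tau_n}=1$, which is exactly how $F_\zg^T$ was defined in Definition~\ref{def:matching}. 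A local move at a notched end reduces tagged arcs to ordinary arcs, and the conventions $x_\ell=x_r x_{r^{(p)}}$ together with the map $\Phi$ relate a triangulation containing a self-folded triangle to one obtained by flipping the enclosing noose; I expect the self-folded case to require a separate and somewhat fiddly treatment, conceptually parallel to the generic one.

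For the induction itself, the base cases are direct: when $d=0$ either $\zg\in T$ or $\zg$ is a boundary segment, the snake graph is trivial, and both sides equal $x_\zg$; when $d=1$ the snake graph $G_{T,\zg}$ is a single tile with exactly two perfect matchings, and unwinding Definition~\ref{def:matching} literally reproduces the exchange relation~(\ref{shear-exchange}). Now suppose $d\ge 2$. Let $\Delta_d$ be the ideal triangle of $T$ containing the last segment of $\zg$ (the segment from $p_d$ to the endpoint $t$); its sides are $\tau_{i_d}$, opposite $t$, together with two arcs or boundary segments $\psi,\psi'$ incident to $t$. Since $\zg$ meets $\tau_{i_d}$ exactly once, at $p_d$, the curves $\zg$ and $\tau_{i_d}$ are the two diagonals of a (possibly degenerate) quadrilateral whose four sides are $\psi$, $\psi'$, and the two generalized arcs $\zg_1,\zg_2$ obtained by following $\zg$ from its starting point $s$ up to $p_d$ and then running to the two endpoints of $\tau_{i_d}$ inside the triangle across $\tau_{i_d}$. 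Each of $\zg_1,\zg_2$ crosses $T$ in at most $d-1$ points, so the inductive hypothesis applies to them, and the Ptolemy relation for lambda lengths (as in the proof of Proposition~\ref{Ptolemy}) gives
\[ x_{\tau_{i_d}}\, x_\zg \;=\; Y\, x_{\zg_1}\, x_{\psi'} \;+\; Y'\, x_{\zg_2}\, x_{\psi} \]
for suitable coefficient monomials $Y,Y'$ in the $y_{\tau_i}$. It then remains to verify that the matching formula obeys the same relation, i.e.\ that $x_{\tau_{i_d}}\, X_\zg^T = Y\, X_{\zg_1}^T\, x_{\psi'} + Y'\, X_{\zg_2}^T\, x_{\psi}$; granting this, the inductive step closes by comparison with Theorem~\ref{clust-surface}, and the theorem follows.

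The combinatorial core is thus a single statement about snake graphs: the perfect matchings of $G_{T,\zg}$ split into two classes according to how a matching behaves on the last tile $G_d$ (the tile of $p_d$), and these two classes are in weight- and (specialized) height-monomial-preserving bijection — up to the appropriate monomial factors — with the perfect matchings of $G_{T,\zg_1}$ and of $G_{T,\zg_2}$ respectively. The discrepancy between $\mathrm{cross}(T,\zg)$ and $\mathrm{cross}(T,\zg_1),\mathrm{cross}(T,\zg_2)$ accounts precisely for the extra factor $x_{\tau_{i_d}}$ against $x_{\psi'}$ and $x_{\psi}$, while the coefficients $Y,Y'$ emerge on the $y(P)$ side through the symmetric-difference description of Lemma~\ref{thm y} and the normalization of $P_-$. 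I expect this bookkeeping to be the main obstacle: one must propagate the crossing monomial, the edge weights $x(P)$, and the specialized height monomials $y(P)$ simultaneously through the one-tile-at-a-time decomposition of the snake graph, and check it in every configuration in which sides of the relevant quadrilaterals may be glued to each other in $(S,M)$ — including the degenerate cases where $\zg_1$ or $\zg_2$ cuts out a contractible monogon (so that its $X^T$ vanishes), where $\psi$ or $\psi'$ is a boundary segment (so that its $x$ equals $1$), and the self-folded configurations (with their triple tiles) set aside at the start. Once this identity is established the induction is complete.
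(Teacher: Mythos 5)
This theorem is cited from \cite{MSW}, not reproved in the present paper, so I am comparing your proposal to the proof in that reference. Your overall strategy --- induct on the crossing number $d$ by applying the generalized Ptolemy relation to the quadrilateral with diagonals $\gamma$ and $\tau_{i_d}$ --- is in the same family as the arguments of \cite{MS,MSW}, and the reduction steps you list (tagged to ordinary arcs, the $F$-polynomial statement following formally from Theorem~\ref{th:reduction-principal}) are sound. But the place where you close the induction hides what is in fact the entire content of the theorem, and there is a genuine gap there.

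Proposition~\ref{Ptolemy} only asserts $x_{\tau_{i_d}} x_\gamma = Y\, x_{\gamma_1} x_{\psi'} + Y'\, x_{\gamma_2} x_\psi$ \emph{for some} coefficients $Y,Y'$; in $\Aprin(B_T)$ those coefficients are not combinatorial data attached to the quadrilateral, but are computed from the shear coordinates of the $n$ elementary laminations $L_1,\dots,L_n$ relative to a triangulation containing $\psi,\psi',\gamma_1,\gamma_2$. Your argument needs the matching formula to satisfy the same relation with the \emph{same} $Y,Y'$ --- and only with those --- for the comparison with the Ptolemy relation plus the inductive hypothesis $X_{\gamma_i}^T = x_{\gamma_i}$ to force $X_\gamma^T = x_\gamma$. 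Establishing this requires a bridge between the specialized height monomials of Definition~\ref{height} (built from symmetric differences with $P_-$, via Lemma~\ref{thm y}) and the transverse measures/shear coordinates of \cite[\S 12--13]{FT}; that bridge is the substance of the $F$-polynomial part of \cite[Thm.\ 4.9]{MSW}, not a bookkeeping verification one carries out after the main structure is in place. A second, more minor, issue is that the ``split according to the last tile'' picture is misleading: writing $v_1$ for the vertex shared by $\tau_{i_{d-1}}$ and $\tau_{i_d}$, the arc $\gamma_2$ (to $v_2$) retains all $d-1$ earlier crossings and $G_{T,\gamma_2}$ is the first $d-1$ tiles, whereas $\gamma_1$ (to $v_1$) sheds an entire terminal fan $\tau_{i_k},\dots,\tau_{i_{d-1}}$ of crossings at once; so the decomposition of $\mathrm{Match}(G_{T,\gamma})$ must match it against prefixes of two different lengths, not a single ``tile removed'' recursion. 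Both points are resolvable --- indeed, a clean way to prove the needed identity is via the matrix formulas and the skein relations of this very paper, which however cite Theorem~\ref{thm MSW} as input --- but as written they constitute a gap rather than bookkeeping, and the actual proof in \cite{MSW} spends its weight precisely here (together with a separate reduction to triangulations without self-folded triangles, which is also more than ``fiddly but parallel'').
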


\begin{remark}
In a few cases (see Lemma \ref{F-polynomial}), the elements
$F_{\gamma}^T$ may not be polynomials, only Laurent polynomials.
However, it's known that  a cluster algebra with principal coefficients
with respect to the seed $((x_1,\dots,x_n), (y_1,\dots,y_n),B)$ is 
contained in $\Z[x_1^{\pm},\dots,x_n^{\pm}; y_1,\dots,y_n]$ \cite[Proposition 3.6]{FZ4}.
Motivated by this fact and also Theorem \ref{th:reduction-principal}, 
we see that we should define the cluster algebra element associated to a generalized
arc  as in Definition \ref{rightdefinition}.
\end{remark}

\begin{definition}\label{rightdefinition}
Let $\PP$ be the tropical semifield $Trop(y_1,\dots, y_n)$.
We define the cluster algebra element $x_{\gamma}^T \in \A=\Aprin(B_t)$ associated to 
a generalized arc  by 
\begin{equation}
x_{\gamma}^T = \frac{X_{\gamma}^{T}}
{F_{\gamma}^T|_\PP (y_1, \dots, y_n)} \, .
\end{equation}
\end{definition}

\begin{remark}
Most of the time (see Lemma \ref{F-polynomial}) $F_{\gamma}^T$
is a polynomial with constant term $1$, and hence 
$x_{\gamma}^T = X_{\gamma}^T$.
\end{remark}

\subsection{Definition of the cluster algebra elements associated to closed loops}\label{sect def loops}

By using a variant of the above construction, for each closed loop $\gamma$ we will 
associate an element of (the fraction field of) $\A$ \cite{MSW2}.

\begin{definition} [\emph{Band Graph corresponding to a closed loop $\gamma$}]
\label{def band}
Let $\gamma$ be a closed loop in $(S,M)$, which may or not have self-intersections, 
but is not contractible and has no contractible kinks.  We pick a triangle $\Delta$ traversed by $\gamma$ arbitrarily, let $p$ be a point in the interior of this triangle which lies on $\gamma$, and let $b$ and $c$ be the two sides of triangle crossed by $\gamma$ 
immediately before and following its travel through point $p$, respectively.  
Let $a$ be the third side of $\Delta$.   Note that these definitions
 make sense even if $\Delta$ is self-folded.  
We let $\tilde{\gamma}$ denote the arc from $p$ back to itself that exactly 
follows closed loop $\gamma$.  See the left of Figure \ref{fig band}.

\begin{figure}
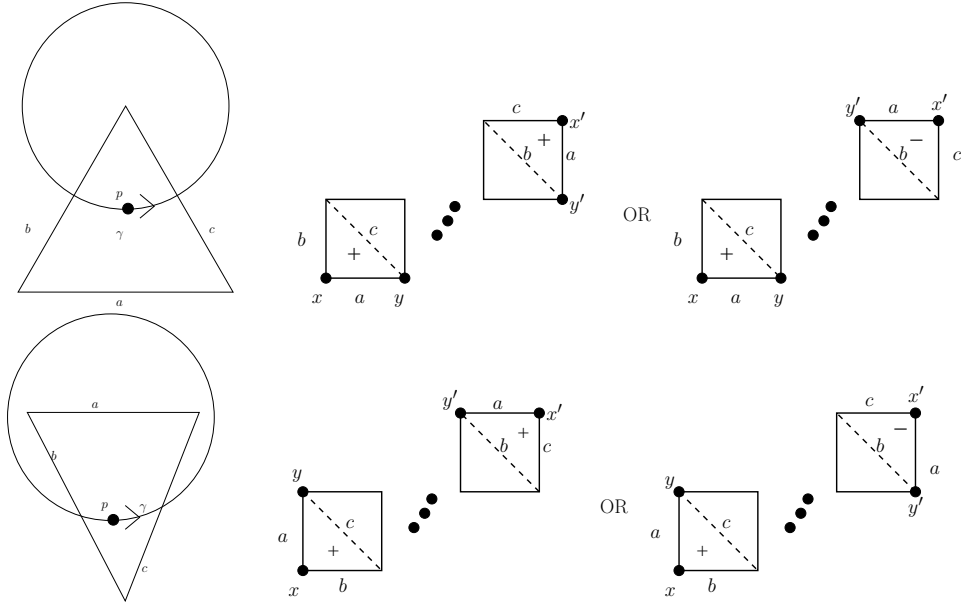

\scalebox{0.2}
{
\input{BandTriag.pstex_t}
\hspace{8em}
}
\scalebox{0.3}
{
\input{BandGs.pstex_t}
}
\scalebox{0.2}
{
\input{BandTriag2.pstex_t}
\hspace{8em}
}
\scalebox{0.3}
{
\input{BandGs2.pstex_t}
}
\caption{(Left): Triangle containing $p$ along closed loop $\gamma$. \hspace{10em}
(Right): Corresponding Band graph (with $x\sim x'$, $y\sim y'$) depending on whether $\gamma$ crosses an odd or even number of arcs. The $+$'s and $-$'s denote relative orientation of each tile}
\label{fig band}
\end{figure}

We start by building the snake graph $G_{T, \tilde{\gamma}}$
as defined above.  In the first tile of $G_{T, \tilde{\gamma}}$, let $x$ denote the vertex 
at the corner of the edge labeled $a$ and the edge labeled $b$, and let $y$ denote the vertex at the other end of the edge labeled $a$.  Similarly, in the last tile of 
$G_{T, \tilde{\gamma}}$, let $x'$ denote the vertex at the corner of the edge labeled 
$a$ and the edge labeled $c$, and let $y'$ denote the vertex at the other 
end of the edge labeled $a$.  See the right of Figure \ref{fig band}.

From $G_{T, \tilde{\gamma}}$, we build $\widetilde{G}_{T, \gamma}$, the band graph 
for the closed loop $\gamma$, by identifying the edges labeled $a$ in the first and last tiles so that the vertices $x$ and $x'$ and the vertices $y$ and $y'$ are glued together.  We refer to the two vertices obtained by identification
as $x$ and $y$, and to the edge obtained by identification as the \emph{cut edge}.
The resulting graph lies on an annulus or a M\"{o}bius strip.  
\end{definition}

\begin{definition} [\emph{Good matchings on a band graph}]
\label{def good matching}
Let $P$ be a perfect matching of a band graph $\widetilde{G}$.  We call $P$ \emph{good} 
if either $x$ and $y$ are matched to each other ($P(x)=y$ and $P(y)=x$) or if both 
edges $(x,P(x))$ and $(y,P(y))$ lie on one side of the cut edge.
\end{definition}

\begin{remark}\label{descend}
Let $\widetilde{G}$ be a band graph obtained by identifying two edges
of the snake graph $G$.  The good matchings of
$\widetilde{G}$ can be identified with a subset of the perfect matchings
of $G$.  Let $\widetilde{P}$ be a good matching of $\widetilde{G}$.
Thinking of $\widetilde{P}$ as a subset of edges of $G$, then
by definition of good we can add to it either the edge $(x,y)$ or
the edge $(x',y')$ to get a perfect matching $P$ of $G$.  In this case,
we say that the perfect matching $P$ of $G$ \emph{descends} to a
good matching $\widetilde{P}$ of $\widetilde{G}$.  In particular,
the minimal matching of $G$ descends to a good matching of
$\widetilde{G}$, which we also call \emph{minimal}.
(To see this, just consider the cases of whether $G$ has an odd or
even number of tiles, and observe that the minimal matching
of $G$ always uses one of the edges $(x,y)$ and $(x',y')$.)
\end{remark}

\begin{definition} \label{def closed loop}
Let $(S,M)$ be a surface, $T=(\tau_1,\dots,\tau_n)$ an ideal triangulation,
and $\A = \Aprin(B_T)$ be the cluster algebra with principal coefficients
with respect to $\Sigma_T$.
Let $\zg$ 
be a closed curve. 
We define a Laurent polynomial $X_\zg^T$
which lies in (the fraction field of) $\A$, as well as a Laurent polynomial 
$F_\zg^T$ obtained from $X_{\gamma}^T$ by specialization.
\begin{enumerate}
\item If $\zg$ is a contractible loop,
 then let $X_\zg^T = -2$.
\item If $\zg$ is a closed loop without self-intersections 
enclosing a single puncture $p$:
\begin{itemize}
\item If $T$ contains a self-folded triangle containing $p$,
then let $X_{\gamma}^T = 1 + \frac{y_r}{y_r^{(p)}}$, 
where $r$ is the 
radius incident to $p$.
\item Otherwise, let 
$X_\gamma^T = 1 + \prod_{\tau\in T} y_\tau^{e_p(\tau)}$, where $e_p(\tau)$ is given by Definition \ref{e_p}.\footnote{We could instead
define $X_{\gamma}^T$ in terms of matchings in a band graph,
as below, but
this definition in terms of $e_p$ is simpler to compute.}
\end{itemize}
\item If $\zg$ 
has a contractible kink, let $\overline{\zg}$ denote the 
corresponding closed curve with this kink removed, and define 
$X_{\zg}^T = (-1) X_{\overline{\zg}}^T$.
\item Otherwise, let 
$$X_{\gamma}^T = \frac{1}{\mathrm{cross}(T,\zg)} \sum_{P} 
x(P) y(P),$$ where the sum is over all good matchings $P$ of the  band graph $\widetilde{G}_{T,\zg}$.
\end{enumerate}
Define $F_{\gamma}^T$ to be the Laurent polynomial obtained from 
$X_{\gamma}^T$ by specializing all the $x_{\tau_i}$ to $1$.
\end{definition}

\begin{remark}\label{rightdefinition2}
We also apply Definition \ref{rightdefinition} to define the 
cluster algebra element 
$x_{\gamma}^T \in \A=\Aprin(B_t)$ when $\gamma$ is a closed loop.
\end{remark}

\begin{lemma}\label{F-polynomial}
If $T$ has no self-folded triangles, then for any generalized arc or 
loop $\gamma$, $F_{\gamma}^T$ is 
a polynomial with constant term $1$.
If $T$ has self-folded triangles, then for any arc (without self-intersections)
or essential loop $\gamma$,
$F_{\gamma}^T$ is a polynomial with constant term $1$.
\end{lemma}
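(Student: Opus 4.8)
The plan is to show that $F_{\gamma}^T$ has constant term $1$ by identifying which perfect matching (or good matching) of the relevant graph contributes the term of lowest degree in the $y_{\tau_i}$, and showing that the specialized height monomial $y(P_-)$ of the minimal matching is the constant $1$, while every other matching contributes a monomial that is divisible by some $y_{\tau_i}$ (in the polynomial, not merely Laurent, sense). Recall $F_{\gamma}^T$ is obtained from $X_{\gamma}^T$ by setting all $x_{\tau_i}=1$, so $F_{\gamma}^T = \sum_P y(P)$ in the arc case and $\sum_P y(P)$ over good matchings in the loop case. By construction the minimal matching $P_-$ has $P_- \ominus P_- = \emptyset$, so it encloses no tiles, hence $h(P_-)=1$ and $y(P_-) = \Phi(1) = 1$; thus the constant term is at least $1$, and the content of the claim is that no other matching can produce a monomial equal to $1$ after specialization, i.e., that $\Phi(h(P))$ is a genuine (non-Laurent) monomial with positive total degree whenever $P \neq P_-$.

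First I would handle the case where $T$ has no self-folded triangles. Then $\Phi$ is the identity on the $\Y_{\tau_i}$, so $y(P) = h(P) = \prod_k y_{\tau_k}^{m_k}$ where $m_k \geq 0$ counts the tiles of $G_P$ with diagonal labeled $\tau_k$. If $P \neq P_-$, then by Lemma \ref{thm y} the symmetric difference $P_- \ominus P$ is a nonempty union of cycles enclosing a nonempty set of tiles (a nonempty symmetric difference of two distinct perfect matchings of a graph where each component is a cycle must enclose at least one tile of the snake/band graph, since in a snake graph the bounded faces are exactly the tiles), so at least one $m_k \geq 1$, forcing $h(P)$ to be a non-constant monomial. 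This immediately gives that $F_{\gamma}^T$ is a polynomial (all exponents nonnegative) with constant term exactly $1$, covering the first sentence of the lemma. For band graphs the same argument works using good matchings and Remark \ref{descend}: the minimal good matching descends from $P_-(G)$, and any other good matching descends to a perfect matching $P$ of $G$ with $P \neq P_-$, so again $h(P)$ is non-constant; one should double-check that the extra ``cut edge'' contributions do not perturb this, but since the height monomial only records enclosed tiles and the minimal good matching encloses none, the argument is unchanged.

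The main obstacle is the case where $T$ has self-folded triangles, because then $\Phi(\Y_r) = y_r/y_{r^{(p)}}$ introduces a genuine denominator, so a priori $y(P)$ need not be a polynomial, and the cancellation of these denominators across the sum $\sum_P y(P)$ is what has to be controlled. Here I would use the hypothesis that $\gamma$ is an arc with no self-intersections or an essential loop: under this restriction, $\gamma$ crosses a self-folded triangle only in the ``well-behaved'' ways, so each radius $r$ and its enclosing noose $\ell$ appear in tiles in a paired fashion. Concretely, I would argue that whenever a tile with diagonal $r$ is enclosed by $G_P$, an adjacent tile with diagonal $\ell$ is also enclosed (and vice versa), because in the snake graph built from crossing a self-folded triangle (case (2) of Section \ref{sect tiles}, the triple tile), the tiles $G_{j-1}, G_j, G_{j+1}$ with diagonals among $\ell, r, \ell$ are glued rigidly, so the cycles of $P_- \ominus P$ enclose them in compatible groups. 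Then $\Phi(\Y_\ell)\Phi(\Y_r) = y_{r^{(p)}} \cdot (y_r / y_{r^{(p)}}) = y_r$, a polynomial; more generally the exponent pattern $m_\ell, m_r$ enclosed by any good matching of the relevant graph satisfies $m_\ell \geq m_r$ (the noose is enclosed at least as often as the radius), so $\Phi(h(P)) = (\text{stuff}) \cdot y_{r^{(p)}}^{m_\ell} (y_r/y_{r^{(p)}})^{m_r} = (\text{stuff})\cdot y_r^{m_r} y_{r^{(p)}}^{m_\ell - m_r}$ is a genuine monomial with nonnegative exponents. Combined with the fact that $P \neq P_-$ forces some tile to be enclosed and hence some exponent (after this rewriting) to be positive, we again conclude $F_\gamma^T$ is a polynomial with constant term $1$. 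I would verify the inequality $m_\ell \geq m_r$ by a case analysis on the triple-tile configurations in Figure \ref{triptile}, noting that the restriction to non-self-intersecting arcs and essential loops is exactly what rules out the last two (self-intersecting) configurations in that figure, which are the ones where the inequality could fail.
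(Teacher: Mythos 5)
Your proof takes essentially the same route as the paper's: both handle the self-folded-triangle case by observing that, for arcs without self-intersections and essential loops, the triple-tile configurations (the first three in Figure \ref{triptile}) force a tile labeled $r$ to be enclosed only if an adjacent $\ell$ tile is, which is exactly the ingredient needed to see that $\Phi(h(P))$ is a genuine monomial and that the constant term stays $1$. One small inaccuracy: the parenthetical ``(and vice versa)'' in your key claim is false (an enclosed $\ell$ tile need not have an enclosed adjacent $r$ tile), but you never use that direction, so it does not affect the argument.
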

\begin{proof}
By definition, $F_{\gamma}^T = \sum_P \Phi(h(P))$, where the sum 
is over all matchings (resp. good matchings) of the snake graph (resp.
band graph) of $\gamma$.
Clearly $\sum_P h(P)$ is a polynomial in the variables 
$\Y_{\tau_i}$, with constant term $1$.  
The only way that Proposition \ref{F-polynomial} might fail
is if the specialization $\Phi$ produces a Laurent monomial
which is not a monomial, or an extra constant term.  However,
whenever $\gamma$ is an arc or an essential loop, 
each time $\gamma$ goes through
a noose, its local configuration must look like one of the first
three diagrams in 
Figure \ref{triptile}.  By inspection, it's impossible for 
the height monomial $h(P)$ of a matching $P$ of 
the corresponding band graph to include a contribution from 
the tile labeled $r$ \emph{without} also including a contribution
from one of the two adjacent tiles labeled $\ell$.  Therefore 
$\Phi(h(P))$ will never produce a denominator.  Also, it's impossible
for $\Phi(h(P))$ to produce an addition term equal to $1$: such a term
would need to have a factor of $\frac{y_r}{y_{r^{(p)}}}$, but then
there would be no way to cancel the numerator $y_r$.
\end{proof}

We end this section with two examples illustrating the computation of $X_{\gamma}^T$.

\begin{example}
[Example of a Laurent expansion corresponding to a closed loop]
\label{ExampleBand}

\begin{figure}
\input{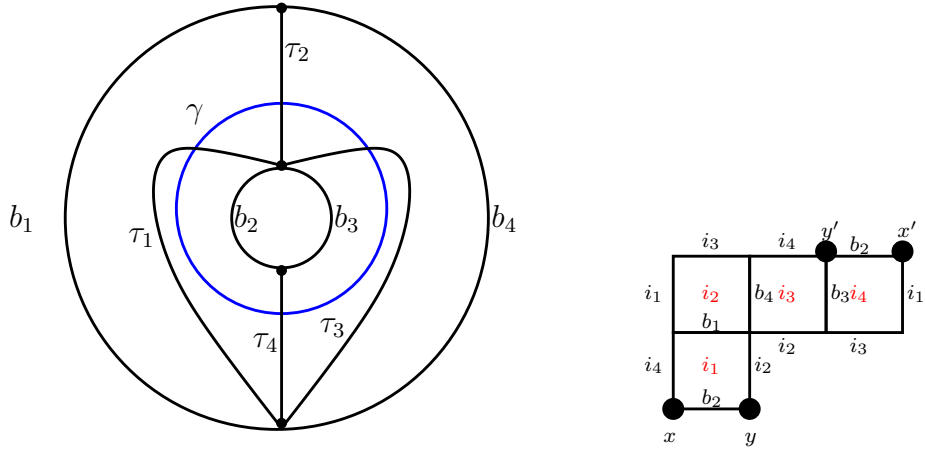}
\caption{(Left): An ideal triangulation $T$ and a closed loop $\gamma$. (Right) Corresponding band graph $\tilde{G}_{T,\gamma}$}
\label{fig Ex-band}
\end{figure}

Consider the ideal triangulation $T$ and closed loop $\gamma$ on the left of Figure \ref{fig Ex-band}.  In this case, we obtain the band graph $\tilde{G}_{T,\gamma}$ appearing on the right of Figure \ref{fig Ex-band}.  We thus compute 
$X_\gamma^T = \frac{x_1^2x_2x_4 + y_3x_1^2 + (y_2y_3 + y_3y_4)x_1x_3 + y_2y_3y_4x_3^2 + y_1y_2y_3y_4x_2x_3^2x_4}{x_1 x_2 x_3 x_4}$ by specializing $b_1=b_2=b_3=b_4=1$.  In particular, $\tilde{G}_{T,\gamma}$ has six good matchings, as listed in Figure \ref{fig Ex-band2}.   

\begin{figure}
\input{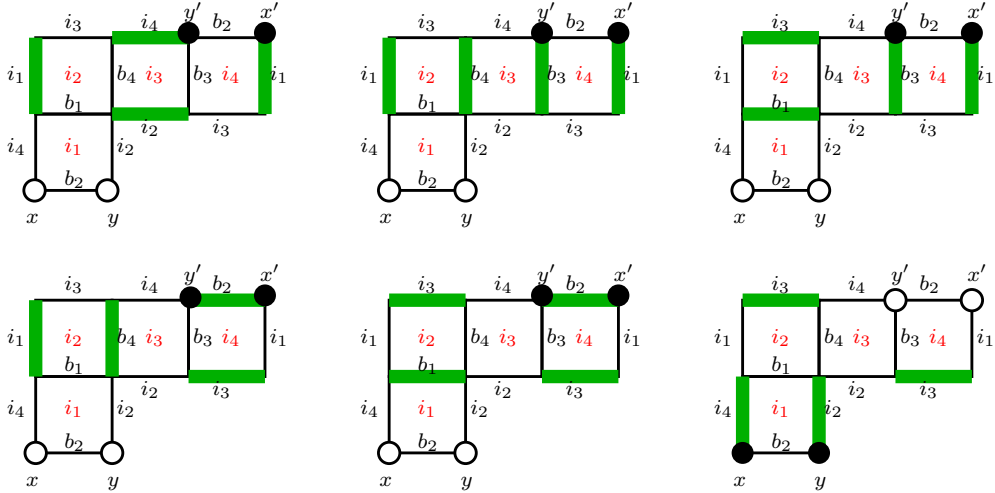}
\caption{The good matchings in the band graph $\tilde{G}_{T,\gamma}$ of the above example}
\label{fig Ex-band2}
\end{figure}

\end{example}

\begin{example}
[Examples of Laurent expansions for generalized arcs through a self-folded triangle]
\label{ExampleFolded}

\begin{figure}
\input{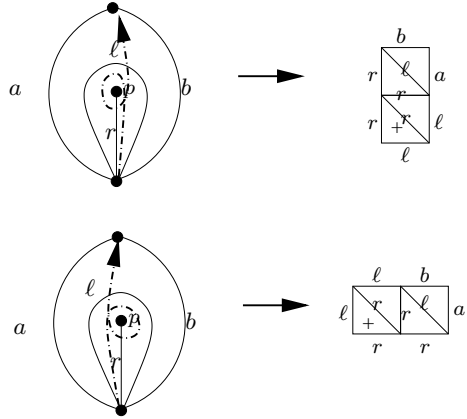}
\caption{(Left): An ideal triangulation $T$ containing a self-folded triangle and generalized arcs $\gamma_1$ and $\gamma_2$. (Right): Corresponding snake graphs $G_{T,\gamma_1}$ and $G_{T,\gamma_2}$}
\label{fig Ex-Folded}
\end{figure}

Consider the ideal triangulation $T$ and the generalized arc $\gamma_1$ on the top (resp. $\gamma_2$ on the bottom) on the left of Figure \ref{fig Ex-Folded}.  In this case, we obtain the snake graph $G_{T,\gamma_1}$ (resp. $G_{T,\gamma_2}$) appearing on the right of Figure \ref{fig Ex-Folded}.  We thus compute 
$$X_{\gamma_1}^T = \frac{a x_r x_\ell + \Y_\ell b x_r x_\ell + \Y_r \Y_\ell b x_r x_\ell}{x_r x_\ell} = b(y_{\tau^{(p)}}+ y_\tau) + a$$ and 
$$X_{\gamma_2}^T = \frac{a x_r x_\ell + \Y_r a x_r x_\ell + \Y_r \Y_\ell b x_r x_\ell}{x_r x_\ell} = a(1 + \frac{y_\tau}{y_{\tau^{(p)}}}) + y_\tau b$$
by specializing $\Y_\ell = y_{\tau^{(p)}}$, $\Y_r = \frac{y_\tau}{y_{\tau^{(p)}}}$.

Notice that in the case of $X_{\gamma_2}^T$, we obtain a Laurent polynomial expansion with a $y_{\tau^{(p)}}$ in the denominator.  The corresponding algebraic quantity $F_{\gamma_2}^T = 1 + \frac{y_\tau}{y_{\tau^{(p)}}} + y_\tau$ is in fact a \emph{Laurent} polynomial and not a polynomial in this case.  To obtain the associated cluster algebra element $x_{\gamma_2}^T$, we follow
Definition \ref{rightdefinition}, and 
divide $X_{\gamma_2}^T$ by the tropical evaluation of 
$F_{\gamma_2}^T$, which is $1/y_{\tau^{(p)}}$. This gives us 
$x_{\gamma_2}^T = a(y_{\tau^{(p)}} + y_\tau) + b y_\tau y_{\tau^{(p)}}$.
\end{example}


\section{Matrix product formulas for generalized arcs and closed loops}
\label{sec matrix}

\newcommand{\Matrix}{M}

\newcommand{\MatrixA}{\chi}
\newcommand{\MatrixL}{\chi}
\newcommand{\GG}{\mathcal{G}}

In this section we also fix a bordered surface $(S,M)$ and ideal triangulation $T$ of $(S,M)$.
We then associate a Laurent polynomial to each arc, generalized arc,
and closed loop in $(S,M)$; this Laurent polynomial represents the corresponding
element of the cluster algebra $\A_T(S,M)$ with principal coefficients with respect
to the seed $T$.
Our formula works by associating a product of matrices
to each such arc (respectively, loop), and then computing  its upper right entry 
(respectively, trace).  
Our formulas are closely related to those given in  
\cite{FG1} and \cite{FG3}.  In particular, if we set our initial cluster variables
equal to $1$, we recover Fock and Goncharov's {\it $X$-coordinates}, and if we set our
coefficient variables equal to $1$, we recover their {\it $A$-coordinates}.

Before presenting our formulas, we need to define some \emph{elementary steps},
and the matrices associated to them.

\begin{definition} (The points $v_{m,\tau}$, $v_{m,\tau}^+$, $v_{m,\tau}^-$)
For each marked point $m \in M$, draw a small 
circle $h_m$ 
locally around $m$.  If $m$ is on the boundary of $S$, then we only consider the intersection $h_m \cap S$.  The circles are chosen small enough so that $h_m \cap h_{m'} = \emptyset$ for each pair of distinct marked points $m$ and $m'$.
For each arc $\tau \in T$ and marked point $m \in M$ incident to $\tau$, we let $v_{m,\tau}$ denote the intersection point $h_m \cap \tau$.  
We let $v_{m,\tau}^+$ (resp. $v_{m,\tau}^-$) denote a point on $h_m$ which is very close
to $v_{m,\tau}$ but in the clockwise (resp. counterclockwise) direction from $v_m$. 
See Figure \ref{fig infinitesimal}.  
\end{definition}

\begin{figure}
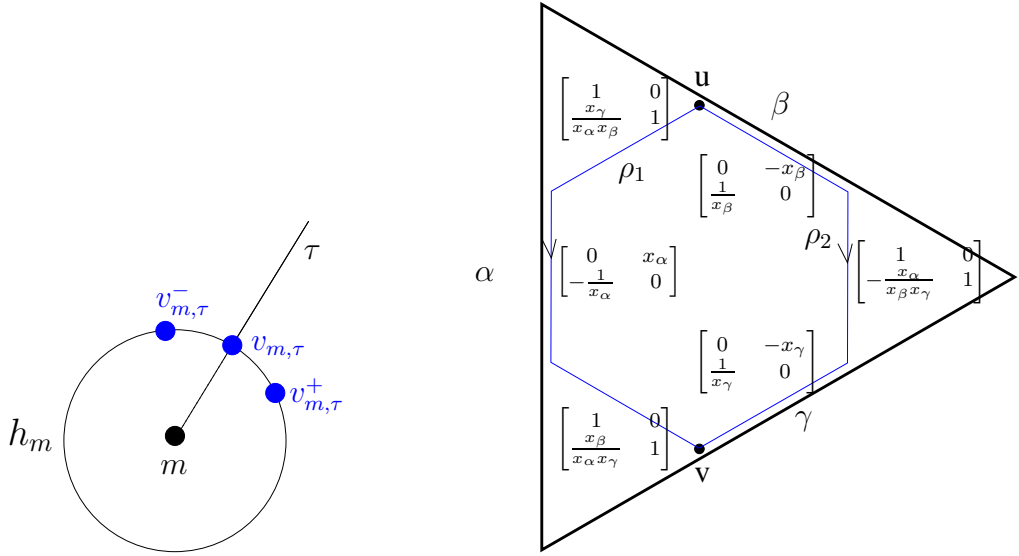

\input{Horocnew.pstex_t} \hspace{5em} \input{TriagHexNew.pstex_t}
\caption{(Left): $h_m$ and $v_{m,\tau}^{\pm}$. (Right): two $M$-paths from $u$ to $v$}
\label{fig infinitesimal}
\end{figure}

\begin{definition}(Elementary steps)
Given $(S,M)$ and $T$, we define three types of \emph{elementary steps}, each of which
connects two points of the form $v_{m,\tau}^{\pm}$ and $v_{m',\tau'}^{\pm}$.  We also
associate a $2 \times 2$ matrix $M(\rho)$ to each elementary step $\rho$.
\begin{itemize}
\item 
The first type of step is shown in Figure \ref{fig step12}.  We consider two 
arcs $\tau$ and $\tau'$ from $T$ which are both incident to a marked point $m$ and which form 
a triangle with  third side $\sigma\in T$.  Then our first type of step
is a curve 
which travels either clockwise or counterclockwise around $h_m$ between 
$\tau$ and $\tau'$, without crossing them. 
The matrix corresponding to this step is
$\left[ \begin{matrix} 1 & 0 \\ \pm \frac{x_{\sigma}}{x_{\tau}x_{\tau'}} & 1 \end{matrix}\right]$, where we choose the positive (resp. negative) sign if the step is clockwise (resp. counterclockwise).  
\item The second type of step is shown in  Figure \ref{fig step34}.  This step moves along a circle $h_m$ connecting 
two points $v_{m,\tau}^+$ and $v_{m,\tau}^-$, so in particular it crosses the arc $\tau$. If the step 
travels clockwise (resp. counterclockwise), we associate to it the matrix
$\left[ \begin{matrix} 1 & 0 \\ 0 & \Y_{\tau} \end{matrix}\right]$ (resp., 
$\left[ \begin{matrix} \Y_{\tau} & 0 \\ 0 & 1 \end{matrix}\right]$).  
\item
The third type of step is shown in Figure \ref{fig step56}.  Given two marked points $m$ and $m'$ connected by some 
$\tau \in T$, such a step follows a path parallel to $\tau$, and  connects  
$v_{m,\tau}^{\pm}$ and $v_{m',\tau}^{\mp}.$  
We associate the matrix $\left[ \begin{matrix} 0 & x_{\tau} \\ -\frac{1}{x_{\tau}} & 0 \end{matrix}\right]$ to such a step if 
$\tau$ lies beneath it (when we orient the step from left to right), and we associate the inverse matrix $\left[ \begin{matrix} 0 & -x_{\tau} \\ \frac{1}{x_{\tau}} & 0 \end{matrix}\right]$ to the step otherwise.
\end{itemize}
\end{definition}

\begin{figure}
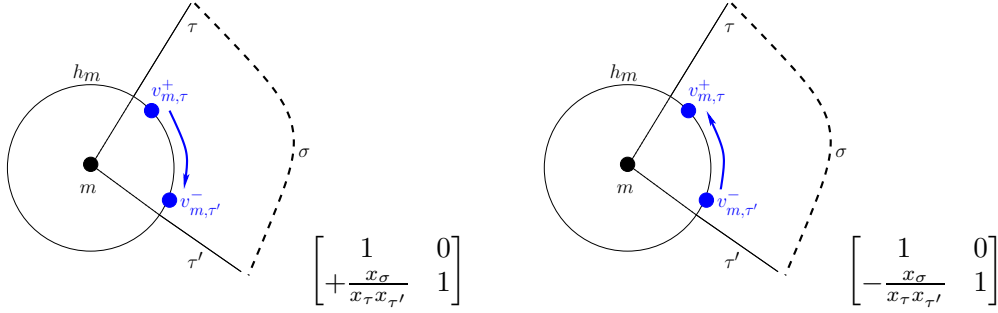

\scalebox{0.3}{\input{Step1new.pstex_t}} $\left[ \begin{matrix} 1 & 0 \\ + \frac{x_{\sigma}}{x_{\tau} x_{\tau'}} & 1 \end{matrix}\right]$ \hspace{2em} 
\scalebox{0.3}{\input{Step2new.pstex_t}} $\left[ \begin{matrix} 1 & 0 \\ - \frac{x_{\sigma}}{x_{\tau}x_{\tau'}} & 1 \end{matrix}\right]$
\caption{Elementary step of type 1}
\label{fig step12}
\end{figure}

\begin{figure}
\scalebox{0.3}{\input{Step3new.pstex_t}} $\left[ \begin{matrix} 1 & 0 \\ 0 & \Y_{\tau} \end{matrix}\right]$ \hspace{2em} 
\scalebox{0.3}{\input{Step4new.pstex_t}} $\left[ \begin{matrix} \Y_{\tau} & 0 \\ 0 & 1 \end{matrix}\right]$
\caption{Elementary step of type 2}
\label{fig step34}
\end{figure}

\begin{figure}
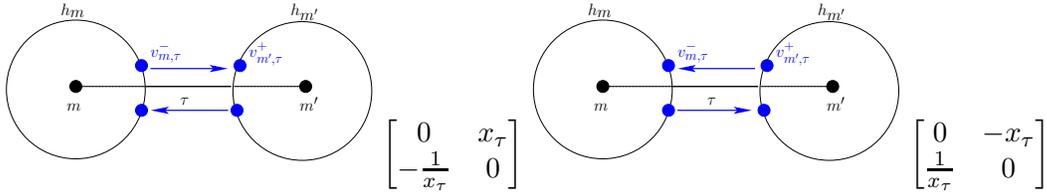

\scalebox{0.25}{\input{Step5Final.pstex_t}} $\left[ \begin{matrix} 0 & x_{\tau} \\ -\frac{1}{x_{\tau}} & 0 \end{matrix}\right]$
\scalebox{0.25}{\input{Step6Final.pstex_t}}
 $\left[ \begin{matrix} 0 & -x_{\tau} \\ \frac{1}{x_{\tau}} & 0 \end{matrix}\right]$
\caption{Elementary steps of type 3 in the positive and negative direction}
\label{fig step56}
\end{figure}

\begin{remark} \label{rem:not_in_SL2}
Note that the two matrices associated to elementary steps of type 1 and elementary steps of type 3 are inverses in $SL_2(\R)$.   Also, the product of the two
matrices associated to elementary steps of type 2 is $\Y_{\tau}$ times the identity.
\end{remark}

We are now ready to associate a matrix to each generalized arc and closed loop.

\begin{definition} (Matrix associated to an arc or loop)\label{matrixforarc}
Given $(S,M)$, $T$, and a generalized arc $\gamma$ in $(S,M)$ from $s$ to $t$, we 
choose a curve $\rho_{\gamma}$ which has the following properties:
\begin{itemize} 
\item It begins at a point $P_s$ of the form 
$v_{s,\tau}^{\pm}$, where $\tau$ is an arc of $T$ incident to $s$.  
\item It ends at a point $P_t$
of the form $v_{t,\tau'}^{\pm}$.  
\item It is a 
concatenation of elementary steps, and is isotopic to 
the portion of $\gamma$ between $h_s \cap \gamma$ and $h_t \cap \gamma$.
\item The intersections of $\rho_{\gamma}$ with $T$ 
are in bijection with the intersections of $\gamma$ with $T$. 
\end{itemize}
Given a closed loop $\gamma$ in $(S,M)$, we choose a curve $\rho_{\gamma}$
which has the following properties:
\begin{itemize}
\item It starts and ends at a point $P_s=P_t$ which has the form 
$v_{m,\tau}^{\pm}$, where $\tau \in T$ is crossed by $\gamma$.
\item It is a concatenation of elementary steps, and is isotopic to $\gamma$.
\item The intersections of $\rho_{\gamma}$ with $T$ are in bijection with 
the intersections of $\gamma$ with $T$.
\end{itemize}
In both cases, we refer to the curve $\rho_{\gamma}$ as an \emph{$M$-path}.
If $\rho_{\gamma} = \rho_t \circ \dots \circ \rho_2 \circ \rho_1$ is the decomposition of $\rho_{\gamma}$ into elementary 
steps, then we define 
$M(\rho_{\gamma})
= M(\rho_t) \cdots M(\rho_2) M(\rho_1).$  By convention, the matrix associated to the empty path is the identity matrix.
\end{definition}

\begin{definition} (Reduced Matrix associated to an arc or loop)
\label{def:reduced_matrix}
As noted in Remark \ref{rem:not_in_SL2}, the matrices corresponding to elementary steps of type $2$ are not in $SL_2(\R)$.  For some applications, 
it will be more useful to use the matrices 
$$\left[ \begin{matrix} \Y_{\tau}^{-1/2} & 0 \\ 0 & \Y_{\tau}^{1/2} \end{matrix}\right] \mathrm{~and~} 
\left[ \begin{matrix} \Y_{\tau}^{1/2} & 0 \\ 0 & \Y_{\tau}^{-1/2} \end{matrix}\right]$$ instead for a step in the clockwise (resp. counterclockwise) direction.  
In other words, we divide by $\sqrt{\Y_{\tau}}$ for every 
elementary step of type $2$ crossing the arc $\tau$.  If so, we will let $\overline{M}(\rho_{\gamma}) =
\overline{M}(\rho_t) \cdots \overline{M}(\rho_2) \overline{M}(\rho_1)$ denote the corresponding matrix product in $SL_2(\R)$.
\end{definition}

\begin{remark}
Clearly $M(\rho_{\gamma})$ and $\overline{M}(\rho_{\gamma})$ depend on our choice of $\rho_{\gamma}$.  
However, it turns out that nevertheless,
we can read off from them invariants which depends only on $\gamma$.
\end{remark}

\begin{definition}
Given a $2 \times 2$ matrix $M=(m_{ij})$, let $\UR(M)$ denote $m_{12}$.  Let $\tr(M)$ denote the trace of $M$.
\end{definition}

\begin{lemma} \label{Lem MA}
Fix $(S,M)$ and $T$  as usual.  Let
$\gamma_1$ and $\gamma_2$ be a generalized arc and 
a closed loop, respectively, with no contractible kinks.
Then for any two $M$-paths $\rho$ and $\rho'$ associated to $\gamma_1$, 
we have that 
$$|\UR(M(\rho)| = |\UR(M(\rho'))|.$$
And for any two $M$-paths $\rho$ and $\rho'$ associated to $\gamma_2$, 
we have that 
$$|\tr(M(\rho)| = |\tr(M(\rho'))|.$$
The analogous results also hold for the reduced matrices corresponding to an $M$-path.
\end{lemma}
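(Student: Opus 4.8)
The statement asserts that $|\UR(M(\rho))|$ (resp.\ $|\tr(M(\rho))|$) is an invariant of the underlying generalized arc $\gamma_1$ (resp.\ closed loop $\gamma_2$), independent of the choice of $M$-path. The plan is to set up the invariance by a standard ``connectivity + local moves'' argument: any two $M$-paths representing the same $\gamma$ can be connected by a finite sequence of elementary homotopies, and one checks that each such move changes the matrix product only by left/right multiplication by matrices that do not affect the relevant invariant (up to sign). First I would make precise the space of $M$-paths: an $M$-path is a concatenation of elementary steps of types 1, 2, 3, and two $M$-paths are ``equivalent'' if they are isotopic rel the horocycles $h_m$ and have the same intersection pattern with $T$ (the last bullet in Definition \ref{matrixforarc}). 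Since $\rho$ and $\rho'$ are both isotopic to the same portion of $\gamma$ and have intersections with $T$ in bijection with those of $\gamma$, they are isotopic to each other through curves that may temporarily create and destroy pairs of crossings with $T$ but end with the same crossing sequence; I would enumerate the local modifications that realize such an isotopy.

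\textbf{Key steps.} (1) \emph{Moves near a single horocycle $h_m$.} When $\rho$ enters a circle $h_m$, exits, and re-enters, one can slide the portion of $\rho$ on $h_m$; the relevant relations are: (a) two consecutive type-2 steps across the same $\tau$ in opposite directions multiply to $\Y_\tau \cdot \mathrm{Id}$ by Remark \ref{rem:not_in_SL2}, which is a scalar and hence invisible to $|\UR(\cdot)|$ after one divides through, or can be cancelled outright; (b) going around $h_m$ past a triangle corner (a type-1 step) followed by its reverse gives a type-1 matrix times its inverse (again Remark \ref{rem:not_in_SL2}), which is the identity; (c) the ``triangle relation'' expressing that going the long way around $h_m$ between $\tau$ and $\tau'$ equals going the short way, which will be an identity among type-1, type-2 matrices that I would verify by direct $2\times 2$ computation. (2) \emph{Moves crossing a triangle.} A homotopy that pushes $\rho$ across an ideal triangle of $T$, or across a vertex, is realized by inserting/deleting a back-and-forth excursion across an arc $\tau$: this is a type-3 step followed by its inverse (Remark \ref{rem:not_in_SL2}), hence the identity. (3) \emph{Endpoint ambiguity.} The starting point $P_s$ may be $v_{s,\tau}^+$ or $v_{s,\tau}^-$ for possibly different $\tau$ incident to $s$; changing it composes $M(\rho)$ on the right by a product of type-1 and type-2 matrices localized at $h_s$. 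A type-1 matrix is unipotent lower-triangular $\left[\begin{smallmatrix} 1 & 0 \\ * & 1\end{smallmatrix}\right]$, and a type-2 matrix is diagonal; I would check that right-multiplying by $\left[\begin{smallmatrix} 1 & 0 \\ * & 1\end{smallmatrix}\right]$ fixes $\UR$ exactly, and right-multiplying by $\mathrm{diag}(\lambda,\mu)$ scales $\UR$ by $\mu$ — which is \emph{not} harmless. So for the arc case I would argue more carefully: the ambiguity in $P_s$ and $P_t$ that actually arises (because the crossing sequence with $T$ is pinned down) only involves moving along $h_s$ without crossing $\tau$, i.e.\ pure type-1 steps, plus possibly a sign from orienting the step, giving the absolute value. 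Symmetrically on the $P_t$ side one gets left-multiplication by unipotent \emph{upper}-triangular matrices $\left[\begin{smallmatrix} 1 & * \\ 0 & 1\end{smallmatrix}\right]$ (from the opposite orientation of type-1 steps), which also fixes $\UR$. (4) \emph{The loop case.} For a closed loop the base point $P_s=P_t$ can be moved around $\gamma$; this conjugates $M(\rho)$, and $\tr$ is conjugation-invariant. The remaining local moves are as in (1)--(2), each changing $M(\rho)$ by a scalar (type 2 pair) or identity (types 1,3 reverse), so $|\tr|$ is unchanged; the scalar ambiguity from an uncancelled type-2 pair is why we need absolute value and why it is cleanest to phrase the loop invariant using $\overline M$, where each type-2 step is normalized to determinant $1$. (5) \emph{Reduced matrices.} For $\overline M$, every elementary step lands in $SL_2(\R)$ (Definition \ref{def:reduced_matrix}), so all the above identities still hold verbatim — the only difference is that a cancelling type-2 pair now gives exactly $\mathrm{Id}$ rather than $\Y_\tau\cdot\mathrm{Id}$ — and the same argument gives the analogous statement for $\overline M(\rho_\gamma)$.

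\textbf{Main obstacle.} The genuinely delicate point is (3)/(4): controlling the endpoint/basepoint ambiguity and pinning down exactly which local homotopies can occur given the constraint that $\rho$ and $\rho'$ have the \emph{same} intersection sequence with $T$ as $\gamma$. I expect the hard part is not any single $2\times 2$ computation but organizing a clean ``normal form'' for $M$-paths — e.g.\ showing every $M$-path representing $\gamma$ reduces, via the moves above, to a canonical one built directly from the crossing sequence $\tau_{i_1},\dots,\tau_{i_d}$ of $\gamma$ with $T$ (one type-3 step per crossing, with type-1 and type-2 steps interpolated at each horocycle according to how $\gamma$ turns). Once such a normal form is established, invariance of $|\UR|$ and $|\tr|$ is immediate because the normal-form matrix is manifestly a function of $\gamma$ alone. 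I would therefore structure the proof as: (i) define the normal form; (ii) show each elementary homotopy move preserves the normal-form matrix up to a scalar / conjugation / unipotent multiplication that is invisible to the invariant; (iii) conclude. The $SL_2$ identities needed in (ii) are routine and I would relegate them to a short computation, citing Remark \ref{rem:not_in_SL2} for the three ``obvious'' cancellations.
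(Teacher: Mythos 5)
Your proposal follows essentially the same strategy as the paper: reduce the claim to checking a small list of local moves connecting any two $M$-paths for $\gamma$, and verify that each move changes $M(\rho)$ only by a sign, by a lower-triangular unipotent right factor (invisible to $\UR$), or by conjugation (invisible to $\tr$). The paper's list of moves is slightly different from yours in one respect worth noting: besides the triangle relation (your (1)(c)) and the endpoint/basepoint ambiguities (your (3)--(4)), the paper's other key identity is the \emph{commutation} of a type-2 step with a type-3 step along the same arc $\tau$,
\[
\left[\begin{smallmatrix} 0 & x_{\tau} \\ -x_{\tau}^{-1} & 0 \end{smallmatrix}\right]\left[\begin{smallmatrix} \Y_{\tau} & 0 \\ 0 & 1 \end{smallmatrix}\right]
= \left[\begin{smallmatrix} 1 & 0 \\ 0 & \Y_{\tau} \end{smallmatrix}\right]\left[\begin{smallmatrix} 0 & x_{\tau} \\ -x_{\tau}^{-1} & 0 \end{smallmatrix}\right],
\]
which lets one slide the point at which the path crosses $\tau$ along $\tau$; this is not the same as your cancellations (1)(a), (1)(b), (2), though your overall framework accommodates it. Otherwise your reasoning, including the observation that the endpoint ambiguity only produces type-1 (unipotent) factors and the reduction of the loop case to trace-invariance under conjugation, matches the paper's argument.
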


Lemma \ref{Lem MA} allows us to make the following definitions.

\begin{definition}\label{def:chi}
Let $\gamma$ be a generalized arc and $\gamma'$ be a closed loop,
and let $\rho$ and $\rho'$ denote arbitrary $M$-paths associated to $\gamma$
and $\gamma'$, respectively.
We associate three (related) algebraic quantities to $\gamma$ and $\gamma'$:
\begin{enumerate}
\item $\hat{\chi}_{\gamma,T} = |\UR(M(\rho))|$ and
$\hat{\chi}_{\gamma',T}=|\tr(M(\rho'))|$.
\item $\overline{\chi}_{\gamma,T}=|\UR(\overline{M}(\rho))|$ and 
$\overline{\chi}_{\gamma',T}=|\tr(\overline{M}(\rho'))|$.
\item $\chi_{\gamma,T} = \Phi(\hat{\chi}_{\gamma,T})$ and 
   $\chi_{\gamma',T} = \Phi(\hat{\chi}_{\gamma',T}).$ 
\end{enumerate}
\end{definition}
When $T$ has no self-folded triangles, the first and third definitions
coincide.  The third definition is the most fundamental, and we will 
show in Section \ref{Sec:matrix=match} that $\chi_{\gamma,T} = X_{\gamma,T}$,
where $X_{\gamma,T}$ is the sum over matchings which we defined in 
Section \ref{sect main}.  The second definition will be used in our
proofs of skein relations in Section \ref{sec:skein}.

\begin{proof} [Proof of Lemma \ref{Lem MA}]
Consider an $M$-path $\rho$. 
First note that the two $M$-paths $\rho_1$ and $\rho_2$ in Figure
\ref{fig infinitesimal} have the property that 
$M(\rho_1) = -M(\rho_2)$; in other words, they 
are equal as elements of $PSL_2$.  This means that a local adjustment
of an $M$-path which replaces one segment $\rho_1$ by another segment
$\rho_2$ will not affect the value of $|\UR(M(\rho))|$ or 
$|\tr(M(\rho))|$.  

Next, note that 
$\left[ \begin{matrix} 0 & x_{\tau} \\ 
   \frac{-1}{x_{\tau}} & 0 \end{matrix}\right]
\left[ \begin{matrix} \Y_{\tau} & 0 \\ 
   0 & 1 \end{matrix}\right] = 
\left[ \begin{matrix} 1 & 0 \\ 
   0 & \Y_{\tau} \end{matrix}\right]
\left[ \begin{matrix} 0 & x_{\tau} \\ 
   \frac{-1}{x_{\tau}} & 0 \end{matrix}\right].$
Dividing by $\sqrt{\Y_{\tau}}$ on both sides also preserves this identity.  
This implies that if at some point an $M$-path 
crosses an arc and then travels along that arc, then 
a local adjustment which switches the order of these two
steps will not affect the final value of $|\UR(M(\rho))|$
or $|\tr(M(\rho))|$.  

Finally, if $\rho$ is the $M$-path for a generalized arc $\gamma$,
note that we can change its starting point $P_s=v_{s,\tau}^{\pm}$ 
by choosing the other point
$v_{s,\tau'}^{\pm}$ which is obtained from $P_s$ by traveling around the circle
$h_s$ without crossing any arcs of $T$.  The result 
will still be an $M$-path $\rho'$
for $\gamma$, and we will have that 
$M(\rho') = M(\rho) N$ where $N$ is lower-triangular with $1$'s on the diagonal.
Therefore $|\UR(M(\rho'))| = |\UR(M(\rho))|$.  Similarly, 
we can change the ending 
point $P_t$, and we will still have an $M$-path $\rho'$ for $\gamma$ such that 
$|\UR(M(\rho'))| = |\UR(M(\rho))|$.

Now the proof of Lemma \ref{Lem MA} for generalized arcs follows from the fact
that any two $M$-paths for $\gamma$ can be obtained from each other by 
a combination of the above local adjustments.
To complete the proof in the case that $\gamma$ is a closed loop, note that 
choosing a different starting point $P_s=P_t$ for the $M$-path $\rho$ 
amounts to circularly re-arranging the matrix product.  
The fact that this operation does not affect $|\tr(M(\rho))|$ 
follows from the identity $\tr(UV) = \tr(VU)$.  The analogous identities for 
$|\UR(\overline{M}(\rho))|$ and $|\tr(\overline{M}(\rho))|$ also hold.
\end{proof}

\begin{remark}
\label{ReducedMats} We can actually strengthen 
Lemma \ref{Lem MA} in the case of reduced matrices.
Whenever $\rho$ is an $M$-path (for some generalized arc or closed loop) from $s$ to $t$ and $\rho'$ is another path from $s$ to $t$ that is isotopic to 
$\rho$ (with possibly some extra intersections with $T$), we have that
$\overline{M}(\rho) = \pm \overline{M}(\rho')$.
\end{remark}

Our first main result is the following.

\begin{theorem} \label{thm matrix-match}
Let $(S,M)$ be a bordered surface with an ideal triangulation
$T$, and
let $\iota(T)=\{\tau_1,\tau_2,\dots, \tau_n\}$ be the
corresponding tagged triangulation.
Let $\mathcal{A}$ be
the corresponding cluster algebra with principal coefficients with respect to  $\Sigma_T=(\mathbf{x}_T,\mathbf{y}_T,B_T)$.  
\begin{itemize}
\item Suppose $\zg$
is a generalized arc in $S$ without contractible kinks (this may include a noose).
Let $G_{T,\zg}$ be the graph constructed in Section \ref{sect graph}.  Then 
$$\chi_{\gamma,T} = 
\frac{1}{\mathrm{cross}(T,\zg)} \sum_P 
x(P) y(P),$$ where the sum is over all perfect matchings $P$ of $G_{T,\zg}$.
Combining this with Theorem \ref{thm MSW}, 
it follows that when $\gamma$ is an arc, $\chi_{\gamma,T}$ is the 
Laurent expansion of $x_{\zg}$ with respect to $\Sigma_T$.
\item Suppose that $\zg$ is a closed loop which is not contractible, has no contractible kinks, and does
not enclose a single puncture.  Then 
$$\chi_{\gamma,T} = \frac{1}{\mathrm{cross}(T,\zg)} \sum_{P} 
x(P) y(P),$$ where the sum is over all good matchings $P$ of the  band graph $\widetilde{G}_{T,\zg}$.
\end{itemize}
\end{theorem}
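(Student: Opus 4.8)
The plan is to prove Theorem \ref{thm matrix-match} by a single unified induction on the number $d$ of intersections of $\gamma$ with $T$, setting up a correspondence between the terms of the matrix product $M(\rho_\gamma)$ and the perfect matchings (respectively good matchings) of the snake (respectively band) graph. The key idea is that a well-chosen $M$-path $\rho_\gamma$ visits exactly the $d$ arcs $\tau_{i_1},\dots,\tau_{i_d}$ crossed by $\gamma$, and the matrix product naturally factors as an alternating product of type-3 matrices (the ``crossing'' matrices, which contribute the $x_{\tau_{i_j}}^{\pm 1}$ factors comprising $1/\mathrm{cross}(T,\gamma)$ together with the diagonal entries of matchings) and type-1 and type-2 matrices (contributing the $a_j$ and $\Y$ factors). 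First I would fix a convenient $M$-path: start at $v_{s,\tau_{i_1}}^{\pm}$, and between consecutive crossings of $\tau_{i_j}$ and $\tau_{i_{j+1}}$ traverse the triangle $\Delta_j$ via an elementary step of type 1 (recording $a_j$) possibly composed with a type-2 step (recording $\Y_{a_j}$ or its inverse) according to the relative orientation bookkeeping set up in Section \ref{sect graph}; the choice of clockwise versus counterclockwise is dictated by $\mathrm{rel}(\tilde G_j,T)$.

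Next I would make the bijection between matrix-product terms and matchings explicit. Writing $M(\rho_\gamma)$ as a product of $2\times 2$ matrices and expanding, each monomial in the expansion corresponds to a choice, at each factor, of which matrix entry to take; the nonzero choices are highly constrained (the type-3 matrices are anti-diagonal, the type-1 matrices are lower-triangular, the type-2 matrices are diagonal), and one checks that the surviving choices are in bijection with the perfect matchings of $G_{T,\gamma}$ via the tile-by-tile description. Under this bijection, the $x$-part of a term equals $x(P)/\mathrm{cross}(T,\gamma)$ and the $\Y$-part equals $h(P)$ as in Definition \ref{height}; applying $\Phi$ (which is how $\chi_{\gamma,T}$ is obtained from $\hat\chi_{\gamma,T}$ in Definition \ref{def:chi}) converts $h(P)$ to $y(P)$. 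Concretely I expect to verify the $d=1$ base case by hand (a single type-3 matrix, whose upper-right entry is $\pm x_{\tau_{i_1}}$, matching the two perfect matchings of a single tile after normalizing by $1/x_{\tau_{i_1}}$), and then show that appending the elementary steps that carry $\gamma$ across one more arc corresponds exactly to glueing on one more tile $G_{d+1}$ as in Section \ref{sect graph}, so that the recursion on matchings (each matching of $G_{T,\gamma}$ restricts to a matching of the subgraph on the first $d$ tiles, with a controlled local modification near the glueing edge $a_d$) matches the recursion $M(\rho) \mapsto M(\rho_{d+1})\cdots M(\rho_1)$ on matrix products. The self-folded-triangle and triple-tile cases are handled by treating a triple tile as an atomic block, computing the corresponding $3$-step matrix product once and checking it reproduces the triple-tile matchings and the $\Phi$-specialization (this is exactly where the radius/noose entries $y_r/y_{r^{(p)}}$ and $y_{r^{(p)}}$ of $\Phi$ enter, consistently with Example \ref{ExampleFolded}).

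For the closed-loop case I would reduce to the arc case just proved. The band graph $\widetilde G_{T,\gamma}$ is built from the snake graph $G_{T,\tilde\gamma}$ of the opened-up arc $\tilde\gamma$ by identifying the first and last $a$-edges, and the $M$-path for the closed loop $\gamma$ is obtained from an $M$-path for $\tilde\gamma$ by closing it up into a cyclic word, whence $\chi_{\gamma,T}=\Phi(|\tr M(\rho_\gamma)|)$. The trace of the cyclic matrix product picks out exactly the closed ``necklace'' terms, and the condition characterizing good matchings in Definition \ref{def good matching} (both matched edges at $x,y$ lying on the same side of the cut edge, or $x$ matched to $y$) is precisely the combinatorial shadow of which cyclic expansions of $\tr$ survive: a term of $\tr$ corresponds to a consistent choice that ``wraps around,'' and Remark \ref{descend} already tells us good matchings of $\widetilde G$ descend to perfect matchings of $G_{T,\tilde\gamma}$ using one of the edges $(x,y)$ or $(x',y')$ — these two edges are exactly the two diagonal-entry choices available at the closing factor when taking the trace. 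So the loop statement follows by expanding $\tr(M(\rho_d)\cdots M(\rho_1))$ along the two diagonal positions of the first (equivalently, via cyclicity, any) factor and invoking the already-established arc correspondence for the remaining open product.

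The main obstacle I anticipate is the careful handling of relative orientations and the self-folded-triangle local configurations: one must pin down, for each of the two planar embeddings of each tile and each of the five triple-tile configurations in Figure \ref{triptile}, exactly which sequence of clockwise/counterclockwise elementary steps realizes $\gamma$, and check that the sign conventions in the type-1, type-2, and type-3 matrices combine so that $|\UR(\cdot)|$ (respectively $|\tr(\cdot)|$) is insensitive to these choices — this is consistent with Lemma \ref{Lem MA}, but making the bijection with matchings respect it requires a somewhat tedious case analysis. A secondary subtlety is that in a few degenerate cases $F_\gamma^T$ is only a Laurent polynomial (Lemma \ref{F-polynomial}), so one should phrase the identity at the level of $\hat\chi$ and $h(P)$ first and only apply $\Phi$ at the end, exactly as in the statement $\chi_{\gamma,T}=\Phi(\hat\chi_{\gamma,T})$, to avoid spurious denominators during the induction.
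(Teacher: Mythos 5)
Your proposal is correct and follows essentially the same route as the paper's proof: both set up a tile-by-tile induction matching the terms of the matrix product $M(\rho_\gamma)$ to the perfect (resp.\ good) matchings of the snake (resp.\ band) graph, the paper simply packaging the induction as a self-contained statement about \emph{abstract} snake and band graphs (Proposition~\ref{prop:AbstractMatrixElts} and Theorem~\ref{match1}), which tracks all four entries of the partial product $M_d$ via the boundary-edge classes $S_A,S_B,S_C,S_D$ at the first and last tiles, and then separately checks that the standard $M$-path of Definition~\ref{def:standard} produces exactly that matrix. One bookkeeping caution before you start computing: the $x_{\tau_{i_j}}^{-1}$ factors comprising $1/\mathrm{cross}(T,\gamma)$ and the $\Y_{\tau_{i_j}}$ factors enter through the type-1 (lower-triangular) and type-2 (diagonal) matrices respectively, not the type-3 (anti-diagonal) ones, which instead carry the variables $x_a$, $x_{a_j}$, $x_z$ for the other triangle sides; this will straighten itself out once you track the full $2\times 2$ partial products, which you will in any case need to do to close the induction (the upper-right entry alone does not satisfy a self-contained recursion).
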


Comparing our formulas to those 
of Fock and Goncharov \cite{FG3}, we observe the following.

\begin{prop} \label{FGXcoord}
Fix $(S,M)$ and $T$.  Let $\gamma$ be a generalized arc or closed loop,
and suppose that it crosses arcs $\tau_{i_1},\dots,\tau_{i_d}$ in $T$.
Then if we substitute $x_i=1$ and $y_i=X_i$ into  
$\overline{\chi}_{\gamma,T}$, the
resulting expression will give
the associated $X$-coordinates for $\gamma$ with respect to $T$.  
On the other hand, if we substitute $x_i=A_i$ and $y_i=1$ into 
$\chi_{\gamma,T}$, the resulting expression will give 
the associated $A$-coordinate for $\gamma$ with respect to $T$.
\end{prop}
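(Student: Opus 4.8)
The plan is to show that the matrix formulas $\overline{\chi}_{\gamma,T}$ and $\chi_{\gamma,T}$ specialize, in the two indicated ways, to the Fock--Goncharov $X$- and $A$-coordinates by checking the claim locally, one elementary step at a time, and then matching the bookkeeping conventions. First I would recall from \cite{FG3} (and \cite{FG1}) how Fock and Goncharov assign a product of $2\times 2$ matrices to a path crossing a triangulated surface: to each arc crossed one records a matrix depending on the $X$-variable of that arc, and to each corner turned inside a triangle one records a ``universal'' unipotent matrix. Their $X$-coordinate of $\gamma$ is then (up to sign and normalization) the upper-right entry (for an arc) or trace (for a loop) of the resulting product; their $A$-coordinate is obtained from the analogous product of the $SL_2$ matrices built from the $A$-variables (lambda lengths), again reading off the upper-right entry or trace.

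The key step is a direct comparison of the elementary step matrices in Definitions above with the Fock--Goncharov building blocks. Under the substitution $x_i=1$, $\Y_i = X_i$: the type~1 matrices $\left[\begin{matrix} 1 & 0 \\ \pm\frac{x_\sigma}{x_\tau x_{\tau'}} & 1 \end{matrix}\right]$ become $\left[\begin{matrix} 1 & 0 \\ \pm 1 & 1 \end{matrix}\right]$, which are exactly FG's corner-turning matrices; the reduced type~2 matrices $\left[\begin{matrix} X_\tau^{\mp 1/2} & 0 \\ 0 & X_\tau^{\pm 1/2} \end{matrix}\right]$ are exactly FG's edge matrices attached to the arc $\tau$; and the type~3 matrices become $\left[\begin{matrix} 0 & 1 \\ -1 & 0 \end{matrix}\right]^{\pm 1}$, which account for the passage between adjacent horocycles and match FG's convention for traveling parallel to an edge. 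One then observes that a concatenation of elementary steps realizing $\gamma$ visits exactly the same sequence of arcs and corners as FG's path, so the two matrix products agree in $PGL_2$, hence $|\UR|$ and $|\tr|$ agree with the FG $X$-coordinates. For the $A$-coordinate statement, under $x_i=A_i$, $\Y_i=1$ the type~2 matrices become the identity (by Remark \ref{rem:not_in_SL2}, their product is $\Y_\tau\cdot\mathrm{id}$, which is now $\mathrm{id}$ in $PGL_2$), so $\chi_{\gamma,T}$ reduces to the product of the type~1 and type~3 matrices with the $A$-variables inserted — and these are precisely the matrices whose entries compute lambda lengths via Penner's/FG's Ptolemy formalism, as already used in the proof of Proposition \ref{Ptolemy}. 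Reading off the upper-right entry (resp.\ trace) then gives the $A$-coordinate (resp.\ its loop analogue).

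I would also need to address the well-definedness subtleties: Lemma \ref{Lem MA} and Remark \ref{ReducedMats} guarantee the matrix products are independent of the choice of $M$-path up to sign, and FG's coordinates are likewise defined only up to the same ambiguities, so the identification is consistent. The role of $\Phi$ in $\chi_{\gamma,T}$ only matters when $T$ has self-folded triangles; since $\Phi$ acts on $\Y$-variables and we are setting $y_i=1$ in the $A$-coordinate statement, $\Phi$ becomes trivial and causes no trouble, while in the $X$-coordinate statement we are using $\overline{\chi}_{\gamma,T}$, which is defined before applying $\Phi$.

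The main obstacle I expect is purely a matter of matching conventions rather than substantive mathematics: Fock and Goncharov's papers \cite{FG1, FG3} use several different but equivalent formalisms (snake calculus, the $\mathcal{A}$- and $\mathcal{X}$-space matrix cocycles, specific orientations and branch-cut choices for the square roots $X_\tau^{1/2}$), and one must pin down exactly which normalization is being claimed and verify that our clockwise/counterclockwise sign rules, our choice of upper-right versus lower-left entry, and our placement of the horocircles $h_m$ all line up with theirs. I would handle this by fixing one triangle and one short path segment through it, writing out both products explicitly, and checking agreement there; the general case then follows since both constructions are multiplicative over the decomposition of $\gamma$ into elementary/triangle-crossing steps. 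Because the statement is essentially a translation result, I would keep the write-up brief, citing \cite{FG1, FG3} for the definitions and presenting only the dictionary between the two sets of matrices.
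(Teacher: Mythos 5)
Your high-level plan—setting up a dictionary between the elementary step matrices and the Fock--Goncharov building blocks, and separately noting that the type~2 matrices trivialize when $\Y_\tau=1$—is exactly the structure of the paper's proof, and for the $A$-coordinate statement your argument matches the paper's (FG's $F$-matrices are the type~1 steps, their $D$-matrices are the type~3 steps, and nothing else is needed).

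For the $X$-coordinate case, however, your claimed dictionary does not quite work, and the paper has to do more. You assert that under $x_i=1$, $\Y_i=X_i$ the type~1 matrices $\bigl[\begin{smallmatrix}1 & 0\\ \pm 1 & 1\end{smallmatrix}\bigr]$ ``are exactly FG's corner-turning matrices''; but Fock and Goncharov's corner matrix in the $X$-coordinate formalism is $I=\bigl[\begin{smallmatrix}1 & 1\\ -1 & 0\end{smallmatrix}\bigr]$, not a unipotent lower-triangular matrix, and their edge matrix is the anti-diagonal $B(X_\tau)=\bigl[\begin{smallmatrix}0 & \sqrt{X_\tau}\\ -1/\sqrt{X_\tau} & 0\end{smallmatrix}\bigr]$, not a diagonal matrix. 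So the elementary steps do not match FG's building blocks one at a time. What the paper actually does is coarsen the vertex set to the $v^-_{m,\tau}$'s alone (collapsing the hexagon inside each triangle to a triangle) and decompose an $M$-path into \emph{quasi-elementary} steps, each of which is a product of two of your elementary steps: a type~1 followed by a type~3 gives $\tilde{I}=\bigl[\begin{smallmatrix}-1 & 1\\ -1 & 0\end{smallmatrix}\bigr]$, and a type~2 followed by a type~3 gives $B(X_\tau)$. Even after this regrouping, $\tilde{I}$ still differs from FG's $I$ by a sign on the diagonal, and one needs the separate observation that simultaneously negating the diagonal entries of every factor in a product changes the product only by an overall sign—so $|\UR|$ and $|\tr|$ are unaffected. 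You did flag that a convention-matching step would be needed and proposed to verify it on one triangle, which is a sound instinct; but the regrouping into pairs of steps and the diagonal-sign identity are genuine (if small) missing ingredients, not just a choice of normalization, so as written your dictionary does not yet close the $X$-coordinate half of the proposition.
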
   

\begin{proof}
In the case of $A$-coordinates, this result is immediate.  Comparing our notation with that of Fock and Goncharov \cite{FG3}, their $F$-matrices realize elementary steps of type $1$ and their $D$-matrices realize elementary steps of type $3$.  (Note that when we set $y_i=1$, a matrix corresponding to an elementary step of type $2$ is the identity matrix.)

To obtain their formula for $X$-coordinates we coarsen our vertex structure on $(S,M)$ by using the subset $V^-$ consisting only of the $v_{m,\tau}^-$'s.  We thereby get a coarsened graph which contains a triangle (as opposed to a hexagon) for each triangle of the triangulation $T$ and a single edge (as opposed to two edges) crossing each $\tau \in T$.  See Figure \ref{fig FG-Xcoord}.  

\begin{figure}
\input{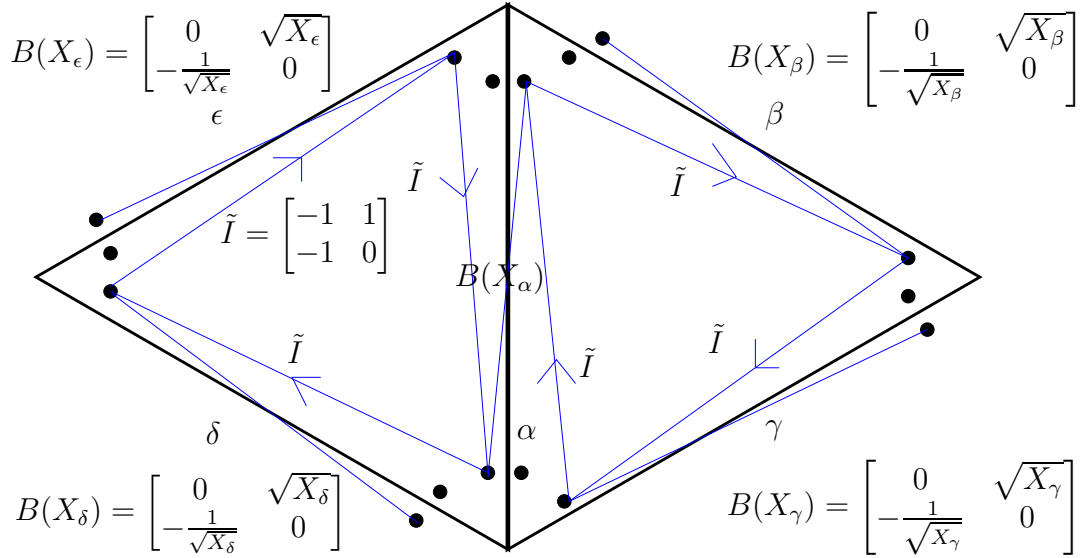}
\caption{A quadrilateral inside triangulation $T$ with steps between vertices of $V^-$ highlighted}
\label{fig FG-Xcoord}
\end{figure}

Any $M$-path from $P_s  = v_{m_1, \tau_1}^-$ to $P_t = v_{m_2, \tau_2}^-$ can be decomposed into quasi-elementary steps, each of which goes between vertices of $V^-$.  One possible quasi-elementary step combines a (counterclockwise) step of type $1$ followed by a step of type $3$.  We obtain

$$ \tilde{I} = \left[\begin{matrix} 
0 & 1 \\ -1 & 0 \end{matrix}\right] 
\left[\begin{matrix} 
1 & 0 \\ -1 & 1 \end{matrix}\right] =
\left [ \begin{matrix} -1 & 1 \\ -1 & 0 \end{matrix}\right ].$$
  
\noindent The other possible quasi-elementary step crosses an arc $\tau\in T$ and combines a step of $2$ and a step of type $3$ (in the positive direction).  These steps correspond to the matrices
$$B(X_\tau) = \frac{1}{\sqrt{X_\tau}}
\cdot \left[ \begin{matrix}
0 & 1 \\ -1 & 0 \end{matrix}\right] 
\left[\begin{matrix} 
1 & 0 \\ 0 & X_\tau \end{matrix}\right] = 
\frac{1}{\sqrt{X_\tau}}
\cdot 
\left[\begin{matrix} 
X_\tau & 0 \\ 0 & 1 \end{matrix}\right]
\left[ \begin{matrix}
0 & 1 \\ -1 & 0 \end{matrix}\right] =  
\left[ \begin{matrix} 0 & \sqrt{X_\tau} \\ -\frac{1}{\sqrt{X_\tau}} & 0 \end{matrix}\right],$$ 
where we have divided by $\sqrt{X_{\tau}}$ as our $M$-path transverses $\tau\in T$, which is crossed by the arc $\gamma$.  

Note that the matrix $\tilde{I}$ differs slightly from the matrix $I = \left [ \begin{matrix} 1 & 1 \\ -1 & 0 \end{matrix}\right ]$ used by Fock and Goncharov.  We will utilize the matrix equality 
$$\left [\begin{matrix} - A_1 & B_1 \\ C_1 & - D_1\end{matrix}\right]
\left [\begin{matrix}  - A_2 &  B_2 \\  C_2 & - D_2\end{matrix}\right] =
- \left [\begin{matrix} -(A_1 A_2 + B_1 C_2) & (A_1 B_2 + B_1 D_2) \\ 
(C_1 A_2 + D_1 C_2) & - (C_1 B_2 + D_1 D_2) \end{matrix}\right],
$$
which indicates how the matrix product changes if we negate the diagonal entries of the constituent matrices.
As our formulas for $\overline{\chi}_{\gamma,T}|_{x_i=1,y_i=X_i}$ only involve matrix products consisting of $\tilde{I}$, $\tilde{I}^2$, and the anti-diagonal matrices $B(X_\tau)$, we have by induction that the absolute values of the upper right entries and traces of these matrix products agree with the formulas for $X$-coordinates in Section 4.1 of \cite{FG3}.
\end{proof}

\vspace{1em}

Theorem \ref{thm matrix-match} immediately implies the following.
\begin{cor}\label{pos}
The quantity $\chi_{\gamma,T}$ is a Laurent polynomial with all coefficients
positive.
\end{cor}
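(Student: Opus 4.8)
The plan is to deduce Corollary~\ref{pos} directly from Theorem~\ref{thm matrix-match}, together with the matching formulas of Section~\ref{sect main}. The point is that the right-hand side of each displayed equation in Theorem~\ref{thm matrix-match} is manifestly a Laurent polynomial with positive coefficients: it is a sum, over a finite (nonempty) set of perfect matchings (resp.\ good matchings), of terms $x(P)y(P)/\mathrm{cross}(T,\gamma)$, and one then needs only two observations. First, each weight $x(P)$ is a monomial in the $x_{\tau_i}$ with all exponents nonnegative (by Definition~\ref{def:perfect}, with the conventions $x_\beta=1$ for boundary segments and $x_\ell=x_rx_{r^{(p)}}$ for a noose), and $\mathrm{cross}(T,\gamma)=\prod_{j=1}^d x_{\tau_{i_j}}$ is likewise a monomial in the initial cluster variables; hence each $x(P)/\mathrm{cross}(T,\gamma)$ is a \emph{Laurent} monomial in the $x_{\tau_i}$ with coefficient $1$. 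Second, each specialized height monomial $y(P)=\Phi(h(P))$ is, after the substitution $\Phi$, a Laurent monomial in the $y_{\tau_i}$ with coefficient $1$ --- the only subtlety being that $\Phi$ can introduce a denominator $y_{r^{(p)}}$ when $\gamma$ passes through a radius of a self-folded triangle, but this is still a Laurent monomial with positive (indeed, unit) coefficient.

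First I would handle the generic cases covered by the two bullet points of Theorem~\ref{thm matrix-match}: $\gamma$ a generalized arc without contractible kinks, or $\gamma$ a closed loop that is noncontractible, has no contractible kinks, and does not enclose a single puncture. In each of these cases Theorem~\ref{thm matrix-match} identifies $\chi_{\gamma,T}$ with $\frac{1}{\mathrm{cross}(T,\gamma)}\sum_P x(P)y(P)$, which by the above is a sum of Laurent monomials each with coefficient $+1$; so $\chi_{\gamma,T}$ is a Laurent polynomial with all coefficients positive, completing those cases.

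Then I would dispatch the remaining cases of Definition~\ref{def:chi}, which are not literally covered by Theorem~\ref{thm matrix-match}: a contractible loop, a closed loop enclosing a single puncture $p$, and curves with contractible kinks or cutting out a contractible monogon. A contractible loop gives $X_\gamma^T=-2$, and a curve with a contractible monogon gives $X_\gamma^T=0$; these have nonnegative coefficients only in a trivial sense, so one should note that the content of the corollary is really about curves without contractible kinks (the statement as phrased should be read with the standing hypotheses of Theorem~\ref{thm matrix-match} in force, or else one simply observes $0$ and the constant $-2$ are degenerate exceptions and the interesting content is the positivity of the matching sum). For a closed loop $\gamma$ enclosing a single puncture $p$, Definition~\ref{def closed loop} gives $X_\gamma^T=1+\prod_{\tau\in T}y_\tau^{e_p(\tau)}$ or $X_\gamma^T=1+y_r/y_{r^{(p)}}$, each of which is visibly a Laurent polynomial with positive coefficients. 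Finally, a contractible kink contributes an overall factor of $(-1)$, which flips the sign of every coefficient; but one removes kinks one at a time, and the genuinely meaningful statement (matching the usage in \cite{MSW2}) is the positivity of $\chi_{\gamma,T}$ for $\gamma$ with no contractible kinks, which is exactly what the two bullet points give.

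I do not expect any real obstacle here: the corollary is an immediate consequence of Theorem~\ref{thm matrix-match}, and the only thing requiring a moment's care is bookkeeping about which degenerate curves are in scope and confirming that the specialization $\Phi$ preserves the monomial-with-positive-coefficient property --- a point already verified in the proof of Lemma~\ref{F-polynomial}, which I would cite. Concretely, the write-up would be: ``By Theorem~\ref{thm matrix-match}, $\chi_{\gamma,T}=\frac{1}{\mathrm{cross}(T,\gamma)}\sum_P x(P)y(P)$, a sum over a nonempty finite set of matchings. Each $x(P)$ is a monomial in the $x_{\tau_i}$ with nonnegative exponents and $\mathrm{cross}(T,\gamma)$ is such a monomial, so $x(P)/\mathrm{cross}(T,\gamma)$ is a Laurent monomial with coefficient $1$; and by the analysis of $\Phi$ in the proof of Lemma~\ref{F-polynomial}, each $y(P)=\Phi(h(P))$ is a Laurent monomial in the $y_{\tau_i}$ with coefficient $1$. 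Hence every coefficient of $\chi_{\gamma,T}$ is a positive integer.''
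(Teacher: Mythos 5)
Your argument is exactly the paper's: Corollary~\ref{pos} is stated there as an immediate consequence of Theorem~\ref{thm matrix-match}, and your write-up simply spells out the deduction (positivity of each $x(P)y(P)/\mathrm{cross}(T,\gamma)$, the behavior of $\Phi$ via Lemma~\ref{F-polynomial}, and the degenerate cases excluded from the theorem's hypotheses). The extra bookkeeping you supply is correct and consistent with the paper's intent.
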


In the case that each $y_i=1$, Corollary \ref{pos} was also proved 
by Fock and Goncharov in 
\cite[Section 12.2]{FG1}.

\section{The matching and matrix-product 
formulas coincide}
\label{Sec:matrix=match}

In this section
we will prove Theorem \ref{thm matrix-match}.
We will start by giving
two general combinatorial 
results in Section \ref{comb-matching} about how one can enumerate
matchings of (abstract) snake and band graphs using appropriate products
of $2 \times 2$ matrices, and then apply these results in the 
case that the snake and band graphs come from arcs and loops in 
a surface.

\subsection{Matchings of abstract snake and band graphs}\label{comb-matching}

\begin{definition}[Abstract snake graph]\label{abstractsnake}
An \emph{abstract snake graph} with $d$ tiles is formed by concatenating
the following \emph{puzzle pieces}:
\begin{itemize}
\item An \emph{initial triangle} \quad \raisebox{-5pt}{\includegraphics[height=.35in]{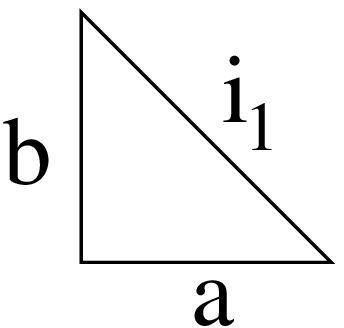}}
\item $d-1$ \emph{parallelograms} $H_1,\dots, H_{d-1}$, where 
each $H_j$ is either 
\quad \raisebox{-5pt}{\includegraphics[height=.55in]{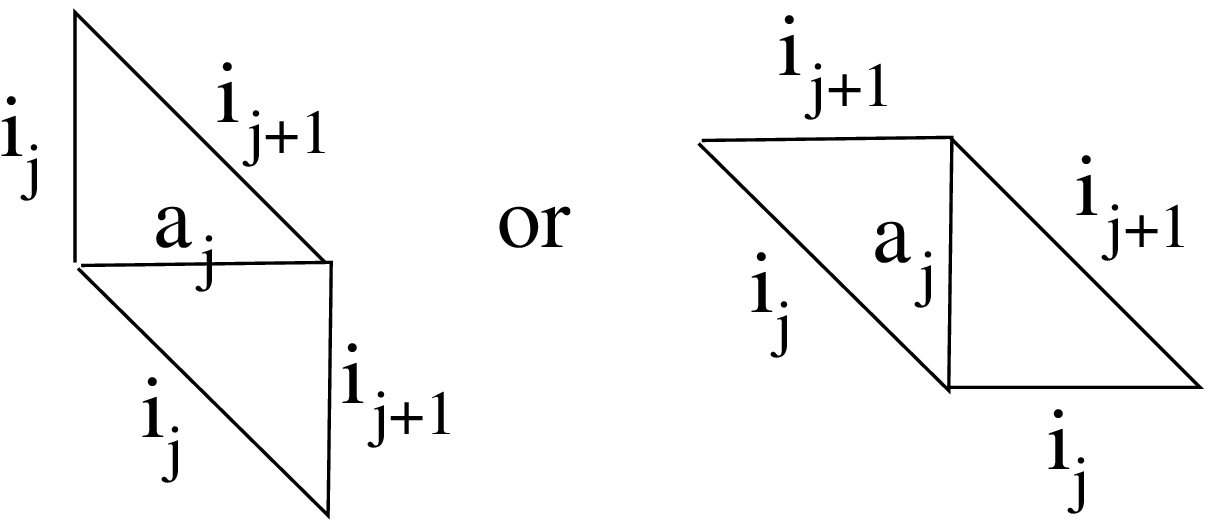}}

\noindent (a \emph{north-pointing} or \emph{east-pointing} parallelogram)
\item A \emph{final triangle} \quad \raisebox{-5pt}{\includegraphics[height=.35in]{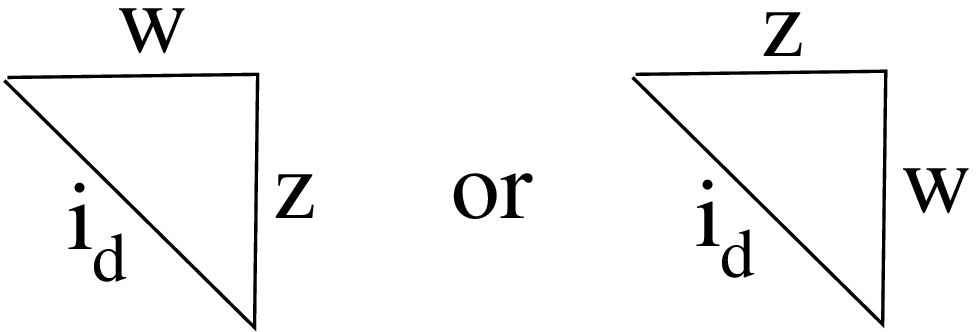}},  based on whether $d$ is odd or even.
\end{itemize}
We then erase all diagonal edges (those with slope $-1$)
from the figure.
\end{definition}

\begin{definition}[Abstract band graph]\label{abstractband}
An \emph{abstract band graph} with $d$ tiles is formed by concatenating
the following \emph{puzzle pieces}:
\begin{itemize}
\item An \emph{initial triangle} \quad \raisebox{-5pt}{\includegraphics[height=.35in]{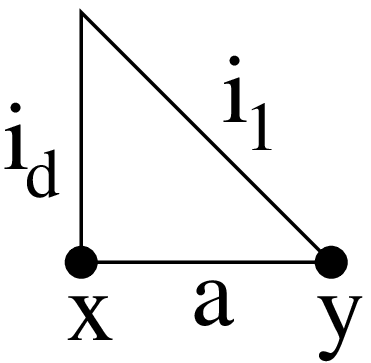}}
\item $d-1$ \emph{parallelograms} $H_1,\dots, H_{d-1}$, where 
each $H_j$ is as before.
\item A \emph{final triangle} \quad \raisebox{-5pt}{\includegraphics[height=.4in]{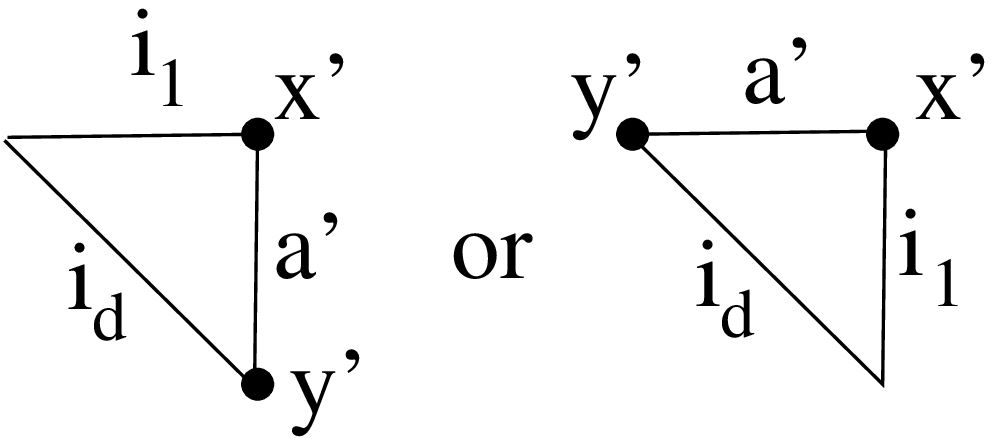}}, based on whether $d$ is odd or even.
\end{itemize}
We then identify the edges $a$ and $a'$, 
the vertices $x$ and $x'$, and 
the vertices $y$ and $y'$. 
Finally, we erase all diagonal edges (those with slope $-1$)
from the figure.
\end{definition}

Just as in Definitions \ref{def:perfect} and \ref{def good matching}, 
we can consider
the perfect matchings of an abstract snake graph 
and the good matchings of an abstract band graph.  Additionally,
we can use Definitions \ref{def:perfect} and \ref{height} to associate to 
each perfect matching $P$ of an abstract snake graph
and to each good matching $P$ of an abstract band graph 
its \emph{weight} and \emph{height} monomials
$x(P)$ and $h(P)$. 

\begin{definition}\label{abstractmatrix}
Let $G$ be an abstract snake or band graph with $d$
tiles.  We will associate to $G$ a matrix 
$M_d$.
First we define some matrices $m_1$,\dots,$m_{d-1}$,
where each $m_i$ is either
$\left[\begin{matrix} 1 & 0 \\ \frac{x_{a_j}}{x_{i_j}x_{i_{j+1}}} & \Y_{i_j} \end{matrix}\right]$
or 
$\left[\begin{matrix} \frac{x_{i_{j+1}}}{x_{i_j}} & x_{a_j} \Y_{i_j} \\ 0 &  \frac{x_{i_j} \Y_{i_j}}{x_{i_{j+1}}} \end{matrix}\right]$,
subject to the following conditions:
\begin{itemize}
\item $m_1$ is of the first type if $H_1$ is a north-pointing
parallelogram, and otherwise it is of the second type;
\item for $i>1$, $m_i$ is of the first type if both 
$H_{i-1}$ and $H_{i}$ have the same shape, and otherwise,
it is of the second type.
\end{itemize}
Finally, if $d=1$, we set $M_1 = 
\left[\begin{matrix} 1 & 0 \\ 0 & 1 \end{matrix}\right],$ and
otherwise, we set $M_d = m_{d-1} \dots m_1$.
\end{definition}

A main result of this section is the following.
\begin{theorem}\label{match1}
Suppose that $G$ is an abstract snake graph with $d$ tiles.  Then its perfect matching enumerator
is given by 
$$\sum_P x(P) h(P) = x_{i_1} \dots x_{i_d} \UR \bigg( \left[\begin{matrix} \frac{x_w}{x_{i_d}} & x_z \Y_{i_d} \\ -\frac{1}{x_z} & 0 \end{matrix}\right] 
M_d 
\left[\begin{matrix} 0 & x_a \\ -\frac{1}{x_a} & \frac{x_b}{x_{i_1}}\end{matrix}\right] \bigg),$$
where the sum is over all perfect matchings of $G$.

Now suppose that $G$ is an abstract band graph with $d$ tiles.  Then its good matching enumerator
is given by 
$$\sum_P x(P) h(P) = x_{i_1} \dots x_{i_d} \tr \bigg( 
\left[\begin{matrix} \frac{x_{i_1}}{x_{i_d}} & x_a \Y_{i_d} \\ 0 & \frac{\Y_{i_d} x_{i_d}}{x_{i_1}}\end{matrix}\right] M_d \bigg),$$ where the sum is over all good matchings of $G$.
\end{theorem}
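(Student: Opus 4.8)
The plan is to prove both statements simultaneously by induction on the number of tiles $d$, using a transfer-matrix approach that tracks partial matchings from left to right along the snake/band graph. First I would set up a refined statement. For a snake graph with $d$ tiles, instead of proving the formula directly, I would prove a two-component version: define column vectors (or rather, keep track of a $2\times 1$ ``state'') recording the two ways a partial matching of the first $j$ tiles can behave at the right-hand boundary edge of tile $j$ — namely whether that boundary edge is covered or uncovered by the partial matching. The claim will be that the generating function for partial matchings compatible with each of these two boundary conditions is given (up to the overall factor $x_{i_1}\cdots x_{i_j}$) by the appropriate entry of the product $M_j \left[\begin{smallmatrix} 0 & x_a \\ -1/x_a & x_b/x_{i_1}\end{smallmatrix}\right]$ acting on an initial state coming from the initial triangle. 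The base case $d=1$ is a direct check that the initial triangle has exactly two matchings (one using edge $a$, one using edge $b$), matching the $\UR$ of the product of the two explicit matrices with $M_1 = \mathrm{Id}$.

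The inductive step is the heart of the argument. When we concatenate tile $j+1$ onto tiles $1,\dots,j$, the two possible shapes of the parallelogram $H_j$ (north-pointing vs.\ east-pointing) determine how partial matchings extend across the glued edge $a_j$, and this is exactly encoded by the two possible forms of $m_j$ in Definition \ref{abstractmatrix}: the $\left[\begin{smallmatrix} 1 & 0 \\ x_{a_j}/(x_{i_j}x_{i_{j+1}}) & \Y_{i_j}\end{smallmatrix}\right]$ matrix versus the upper-triangular one. I would check case by case that multiplying the current state vector by $m_j$ correctly updates (a) the weight monomial $x(P)$ — accounting for the rescaling from $x_{i_1}\cdots x_{i_j}$ to $x_{i_1}\cdots x_{i_{j+1}}$ and for which new edges of tile $j+1$ get used — and (b) the height monomial $h(P)$ — the factor $\Y_{i_j}$ appears precisely when the new tile's diagonal $\tau_{i_j}$ gets enclosed as the partial matching ``flips'' past it, which is governed by whether the partial matching covered the glued edge. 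The final triangle then contributes the left-hand matrix $\left[\begin{smallmatrix} x_w/x_{i_d} & x_z\Y_{i_d} \\ -1/x_z & 0\end{smallmatrix}\right]$ in the same way the initial triangle contributed the right-hand one, and taking $\UR$ extracts the total matching enumerator (the two components summed with the right coefficients). This matrix transfer picture is essentially the classical transfer-matrix method for counting matchings of ``linear'' graphs, dressed up with the weight and height bookkeeping of \cite{ProppLattice, MS}; I would cite Lemma \ref{thm y} to justify that the height monomial really does count enclosed tiles, hence is multiplicative in the way the induction needs.

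For the band graph statement, I would reduce to the snake graph case. A good matching of the band graph $\widetilde G$, by Remark \ref{descend}, is a perfect matching of the underlying snake graph $G$ that uses one of the two cut edges $(x,y)$ or $(x',y')$; conversely each such matching of $G$ descends to a good matching. So I would run the same transfer-matrix induction along the $d$ tiles, but now close up the product by identifying the left boundary state with the right boundary state — which is exactly why a trace appears instead of an $\UR$ entry. Concretely, the closing-up matrix $\left[\begin{smallmatrix} x_{i_1}/x_{i_d} & x_a\Y_{i_d} \\ 0 & \Y_{i_d}x_{i_d}/x_{i_1}\end{smallmatrix}\right]$ plays the role of ``reading tile $d$ as if its right edge were glued back to tile $1$'s left edge $a$,'' and $\tr(\cdots M_d)$ sums over the two cyclic boundary conditions (cut edge used on one side vs.\ the other), which are precisely the two types of good matchings. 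The $\Y_{i_d}$ factors in this matrix handle the enclosed-diagonal contribution of the last tile, consistently with the snake case. I expect the main obstacle to be the sign and normalization bookkeeping: the matrices $m_j$ and the boundary matrices are not the ``obvious'' combinatorial ones, the minus signs (e.g.\ the $-1/x_z$, $-1/x_a$ entries) have to be absorbed correctly so that the final $\UR$ or $\tr$ comes out with all-positive coefficients, and the split of the parallelogram shapes into ``same shape as predecessor'' vs.\ ``different'' (condition on $m_i$ for $i>1$) must be matched precisely to how relative orientation alternates. Careful case analysis of the four configurations (north/east for $H_{j-1}$, north/east for $H_j$) should settle this, but it is the step where errors are easiest to make.
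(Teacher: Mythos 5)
Your proposal is essentially the paper's proof. The paper states this bookkeeping as Proposition \ref{prop:AbstractMatrixElts}: the four entries $A_d, B_d, C_d, D_d$ of $M_d$ are exactly the (suitably normalized) matching enumerators restricted to the four boundary conditions of which edge is used at each end, and the induction on $d$ in the two cases (same vs.\ different parallelogram shape) yields precisely \eqref{MatrixCase1} and \eqref{MatrixCase2}, which agree with the definition of $m_{d-1}$. Corollaries \ref{cor:permatch} and \ref{band-formula} then wrap the boundary matrices around $M_d$ and extract $\UR$ or $\tr$, and Remark \ref{descend} handles the descent of snake-graph matchings to good band-graph matchings exactly as you indicate. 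Two small imprecisions worth noting: (i) the base case $d=1$ is a one-tile snake graph (a $4$-cycle after removing the diagonal), not the initial triangle alone — its two perfect matchings are $\{a,w\}$ and $\{b,z\}$ — but the conclusion $M_1 = \mathrm{Id}$ is right; (ii) tracking a $2\times 1$ state vector is a slightly weaker bookkeeping than the paper's: the paper tracks the full $2\times 2$ matrix, i.e.\ both boundary conditions simultaneously, which is what makes the band-graph trace argument immediate (you need to match the left and right boundary states to close up the cycle). Your approach would get there by re-running the vector argument for each of the two initial states, which is equivalent but one extra step. The signs you flag are indeed the delicate point, but they only live in the two boundary matrices; $M_d$ itself has all nonnegative entries, which is what lets the paper's Proposition \ref{prop:AbstractMatrixElts} identify its entries with matching enumerators directly.
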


The main step towards proving Theorem \ref{match1} is the following.
\begin{prop} \label{prop:AbstractMatrixElts}
Let $G$ be an abstract snake graph with $d$ tiles.  
Write $M_d = \left[\begin{matrix} A_d & B_d \\ C_d & D_d \end{matrix} \right]$.
Then we have
\begin{eqnarray*}
A_d &=& \frac{\sum_{P\in S_A} x(P) h(P)}
{(x_{i_1} \cdots x_{i_{d-1}}) x_a x_{w}} \qquad \qquad
B_d = \frac{\sum_{P\in S_B} x(P)h(P)}
{(x_{i_2} \cdots x_{i_{d-1}}) x_b x_{w}} \\
C_d &=& \frac{\sum_{P\in S_C} x(P)h(P)}
{(x_{i_1} \cdots x_{i_d}) x_a x_z \Y_{i_d}} \qquad \qquad
D_d = \frac{\sum_{P\in S_D} x(P)h(P)}
{(x_{i_2} \cdots x_{i_d}) x_b x_z \Y_{i_d}},
\end{eqnarray*}
where 
$S_A$, $S_B$, $S_C$, and $S_D$ are the sets of perfect matchings of $G$ which 
use the edges $\{a,w\}$; $\{b,w\}$; $\{a,z\}$; and $\{b,z\}$,
respectively.
\end{prop}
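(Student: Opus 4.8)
The plan is to prove Proposition \ref{prop:AbstractMatrixElts} by induction on the number of tiles $d$, since the matrix $M_d = m_{d-1} \cdots m_1$ is built up one factor at a time. The key observation is that the four entries $A_d, B_d, C_d, D_d$ should each be (up to an explicit monomial normalization) a generating function over perfect matchings of the truncated snake graph $G$ consisting of the first $d$ tiles, restricted according to which of the two ``free'' edges $a$ (or $b$) at the final end and $w$ (or $z$) at the starting end are used. So the first step is to set up careful notation for these free edges: in the abstract snake graph with $d$ tiles, the first tile contributes edges $a$ and $b$ on its initial triangle, and the last tile contributes edges $w$ and $z$ on its final triangle, with the shape of the final parallelogram $H_{d-1}$ (north- or east-pointing) determining which pair of opposite edges plays which role.

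First I would check the base case $d=1$ carefully: here $M_1$ is the identity, so $A_1 = D_1 = 1$ and $B_1 = C_1 = 0$, and I need to match this against the combinatorial side. A single-tile snake graph (a square with its diagonal erased) has exactly two perfect matchings, one using the pair of ``horizontal'' boundary edges and one using the ``vertical'' pair; identifying $a,b$ with one pair and $w,z$ with the other (consistently with the labelling convention), one of these matchings lies in $S_A$ and one in $S_D$, while $S_B$ and $S_C$ are empty. Verifying that the normalizing monomials in the statement produce exactly $1$ and $1$ for the $A$ and $D$ entries, and that the weight/height monomials are trivial for a single tile (height monomial is $1$ since no tiles are enclosed relative to $P_-$), will pin down all the sign and normalization conventions.

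For the inductive step, I would write $M_{d} = m_{d-1} M_{d-1}$ and expand the matrix product entrywise, e.g. $A_{d} = (m_{d-1})_{11} A_{d-1} + (m_{d-1})_{12} C_{d-1}$ and similarly for the other three entries, treating the two cases for the shape of $m_{d-1}$ (the first type, lower-triangular with a $\Y_{i_{d-1}}$ in the corner, versus the second type, upper-triangular) separately. On the combinatorial side, I would partition the perfect matchings of the $d$-tile graph according to how they behave on the last tile $G_d$: a matching either matches the new tile ``internally'' in one of two local ways, and restricting to the first $d-1$ tiles yields a perfect matching of the $(d-1)$-tile graph in a corresponding restricted family. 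The effect on the weight monomial is multiplication by an explicit power of $x_{i_{d-1}}, x_{i_d}, x_{a_{d-1}}$, and the effect on the height monomial is multiplication by $\Y_{i_{d-1}}$ precisely when the local configuration on tile $G_{d-1}$ becomes enclosed — which is exactly tracked by the $\Y_{i_{d-1}}$ appearing in the $(1,1)$ or $(2,2)$ entry of $m_{d-1}$. Matching these two recursions term by term, in both shape-cases, completes the induction.

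The main obstacle I expect is bookkeeping: getting the edge-labelling conventions, the parity-dependent identification of $\{a,b\}$ versus $\{w,z\}$ with pairs of opposite edges, and the four normalizing monomials all mutually consistent, so that the entrywise matrix recursion matches the matching-restriction recursion \emph{on the nose} including all the $x$-exponents and the placement of $\Y_{i_j}$'s. In particular the asymmetry between the $A,C$ entries (normalized with an $x_{i_1}\cdots$ factor and an $x_a$) and the $B,D$ entries (normalized with $x_{i_2}\cdots$ and an $x_b$) reflects which edge of the initial triangle the matching uses, and I will need Lemma \ref{thm y} (the symmetric-difference-with-$P_-$ description of height monomials) to justify that the height monomial genuinely behaves multiplicatively under appending a tile. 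Once Proposition \ref{prop:AbstractMatrixElts} is established, Theorem \ref{match1} follows by multiplying $M_d$ on the left and right by the two explicit boundary matrices and reading off the $\UR$ entry (respectively the trace, after also identifying the band-graph good matchings with the appropriate subset of snake-graph matchings via Remark \ref{descend}), so I would present that deduction as a short corollary-style computation afterward.
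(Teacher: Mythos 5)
Your proposal matches the paper's proof in all essentials: induction on $d$ with base case $d=1$ (identity matrix, $S_A$ the minimal matching, $S_D$ the maximal, $S_B=S_C=\emptyset$), followed by an inductive step that peels off the last tile, splits into two cases according to the shape of $H_{d-1}$, and matches the entrywise matrix recursion $M_d = m_{d-1}M_{d-1}$ against a partition of perfect matchings by their behavior on the final tile. The paper packages the combinatorial side as explicit bijections $S_X(G)\leftrightarrow S_Y(G')\sqcup S_Z(G')$ between the $d$-tile and $(d-1)$-tile graphs, but this is the same bookkeeping you anticipate.
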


\begin{proof}
The proof of this proposition is straightforward, using induction
on $d$, and considering what happens as one adds one more tile to a snake graph.  When $d=1$, the graph $G$ consists 
of an initial triangle glued to a final triangle, set $S_A = \{\mathrm{minimal~matching~of~}G\}$, sets $S_B$, $S_C$ are empty, and set $S_D = \{\mathrm{maximal~matching~of~}G\}$.  Thus the base case, where $M_1$ equals the $2$-by-$2$ identity matrix, holds.  

For $d>1$, we let $G'$ denote the graph obtained by gluing together the initial triangle, parallelograms $H_1, \dots, H_{d-2}$, and the final triangle.  For convenience, we label the final triangle in $G'$ with $w'$ and $z'$, and note that the orientation of this triangle depends on whether $(d-1)$ is odd or even.  By changing edge labels, we observe that the graph $G'$ is isomorphic to the subgraph of $G$ consisting of the first $(d-1)$ tiles.  In particular, we either replace the edge label $w'$ with $a_{d-1}$ and $z'$ with $i_d$, or vice-versa.  
In the first case, we have bijections between the following pairs  
of perfect matchings: 
\begin{itemize}
\item $S_A(G) \leftrightarrow S_A(G')$, 
\item $S_B(G)\leftrightarrow S_B(G')$, 
\item $S_C(G) \leftrightarrow S_A(G') \sqcup S_C(G')$, and 
\item $S_D(G) \leftrightarrow S_B(G') \sqcup S_D(G')$.  
\end{itemize}
See Figures \ref{FigGG} and \ref{GG-Matrix}.  In the second case, we have bijections between the following
pairs of perfect matchings:
\begin{itemize}
\item $S_A(G) \leftrightarrow S_A(G') \sqcup S_C(G')$, 
\item $S_B(G) \leftrightarrow S_B(G') \sqcup S_D(G')$,
\item $S_C(G) \leftrightarrow S_C(G')$, and 
\item $S_D(G) \leftrightarrow S_D(G')$.
\end{itemize}
Note that the set $S_A$ contains the minimal matching of $G$, while $S_D$ contains the maximal matching.  Consequently, by altering the weights and heights accordingly, we obtain
\begin{equation}
\label{MatrixCase1}
\left[\begin{matrix} A_{d} & B_{d} \\ C_{d} & D_{d} \end{matrix}\right] 
= \left[\begin{matrix} 1 & 0 \\ \frac{x_{a_{d-1}}}{x_{i_{d-1}}x_{i_{d}}} & \Y_{i_{d-1}} \end{matrix}\right] 
\left[\begin{matrix} A_{d-1} & B_{d-1} \\ C_{d-1} & D_{d-1} \end{matrix}\right] 
\end{equation}
in the first case, and we obtain
\begin{equation}
\label{MatrixCase2}
\left[\begin{matrix} A_{d} & B_{d} \\ C_{d} & D_{d} \end{matrix}\right] 
= \left[\begin{matrix} \frac{x_{i_{d}}}{x_{i_{d-1}}} & x_{a_{d-1}} \Y_{i_{d-1}} \\ 0 & \frac{x_{i_{d-1}} \Y_{i_{d-1}}}{x_{i_{d}}} \end{matrix}\right] \left[\begin{matrix} A_{d-1} & B_{d-1} \\ C_{d-1} & D_{d-1} \end{matrix}\right] 
\end{equation}
in the second case.  Comparing these equations with the definition of matrix $m_{d-1}$, we see that the two cases agree with the two cases in Definition \ref{abstractmatrix}.
\end{proof}

\begin{figure}
\input{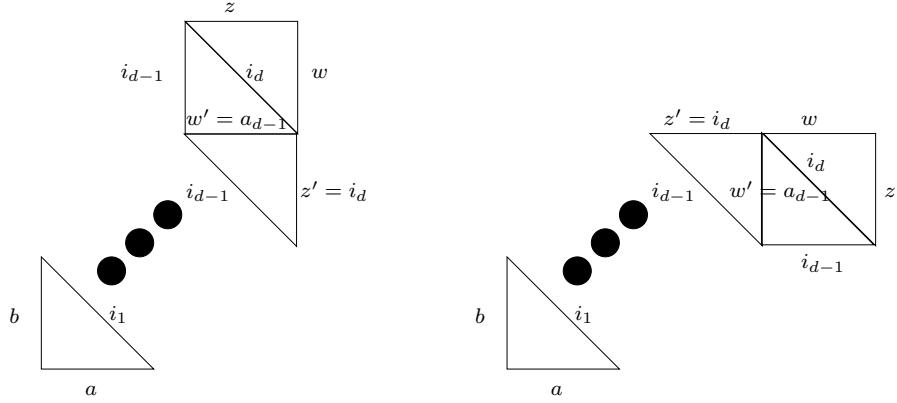}
\caption{The graph $G'$ as a subgraph of $G$ in the case $w'=a_{d-1}$ and $z'=i_d$.  (Left): d is even. (Right): d is odd}
\label{FigGG}
\end{figure}

\begin{figure}
\input{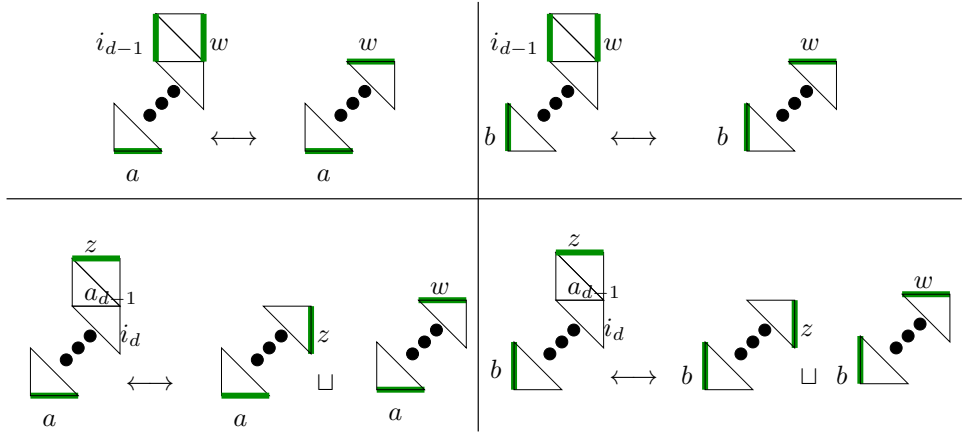}
\caption{Decomposing matrix $M_d$ in the first case and when $d$ is even}
\label{GG-Matrix}
\end{figure}

We have the following immediate corollary of Proposition 
\ref{prop:AbstractMatrixElts}.
\begin{corollary}\label{cor:permatch}
Let $G$ be an abstract snake graph with $d$ tiles.  
Write $M_d = \left[\begin{matrix} A_d & B_d \\ C_d & D_d \end{matrix} \right]$.
Then 
\begin{align}
\frac{\sum_P x(P)h(P)}{x_{i_1}\dots x_{i_d}} &=
\frac{x_a x_w A_d}{x_{i_d}} + \frac{x_b x_w B_d}{x_{i_1} x_{i_d}} + 
\Y_{i_d} x_a x_z C_d + \frac{\Y_{i_d} x_b x_w D_d}{x_{i_1}}\\
\label{matrixformula}
&= 
\UR \bigg( \left[\begin{matrix} \frac{x_w}{x_{i_d}} & x_z \Y_{i_d} \\ -\frac{1}{x_z} & 0 \end{matrix}\right] 
 \left[\begin{matrix} A_d & B_d \\ C_d & D_d \end{matrix}\right]
\left[\begin{matrix} 0 & x_a \\ -\frac{1}{x_a} & \frac{x_b}{x_{i_1}}\end{matrix}\right] \bigg),
\end{align}
where the sum at the left is over all perfect matchings of $G$.
\end{corollary}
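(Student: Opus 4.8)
The plan is to deduce the corollary directly from Proposition~\ref{prop:AbstractMatrixElts}. The first thing I would record is that the four matching sets $S_A,S_B,S_C,S_D$ of that proposition \emph{partition} the collection of all perfect matchings of $G$. Indeed, once the diagonal edges are erased (Definition~\ref{abstractsnake}), the corner vertex of the initial triangle is incident to exactly the two edges $a$ and $b$, so every perfect matching uses exactly one of $a,b$; symmetrically, the corner vertex of the final triangle is incident to exactly the two edges $w$ and $z$, so every perfect matching uses exactly one of $w,z$. Consequently $\sum_P x(P)h(P)$ is the sum of the four partial sums $\sum_{P\in S_A}x(P)h(P)$, $\sum_{P\in S_B}x(P)h(P)$, $\sum_{P\in S_C}x(P)h(P)$, $\sum_{P\in S_D}x(P)h(P)$, and Proposition~\ref{prop:AbstractMatrixElts} evaluates each of these in terms of the entries $A_d,B_d,C_d,D_d$ of $M_d$.

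Next I would substitute those four formulas and divide by $x_{i_1}\cdots x_{i_d}$. For instance Proposition~\ref{prop:AbstractMatrixElts} gives $\sum_{P\in S_A}x(P)h(P)=(x_{i_1}\cdots x_{i_{d-1}})\,x_a x_w A_d$, and division by $x_{i_1}\cdots x_{i_d}$ leaves $x_a x_w A_d/x_{i_d}$; the contributions of $S_B$, $S_C$, $S_D$ collapse in exactly the same way, the factors $x_{i_k}$ and $\Y_{i_d}$ cancelling as dictated by the denominators in Proposition~\ref{prop:AbstractMatrixElts}. This yields the first displayed equality.

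For the second equality I would just perform the $2\times2$ matrix multiplication
$$\left[\begin{matrix} \frac{x_w}{x_{i_d}} & x_z \Y_{i_d} \\ -\frac{1}{x_z} & 0 \end{matrix}\right]\left[\begin{matrix} A_d & B_d \\ C_d & D_d \end{matrix}\right]\left[\begin{matrix} 0 & x_a \\ -\frac{1}{x_a} & \frac{x_b}{x_{i_1}}\end{matrix}\right]$$
and read off its upper-right entry: multiplying the last two factors, the second column of their product has entries $A_d x_a+B_d x_b/x_{i_1}$ and $C_d x_a+D_d x_b/x_{i_1}$, and pairing these against the top row $\big(\frac{x_w}{x_{i_d}},\,x_z\Y_{i_d}\big)$ of the first factor gives $\frac{x_w}{x_{i_d}}(A_d x_a+B_d x_b/x_{i_1})+x_z\Y_{i_d}(C_d x_a+D_d x_b/x_{i_1})$, which is precisely the expression obtained in the previous step. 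I do not anticipate any real obstacle: the corollary is essentially bookkeeping once Proposition~\ref{prop:AbstractMatrixElts} is available, and the only step that requires a little care is checking that $S_A,S_B,S_C,S_D$ really are exhaustive, which is where the explicit shape of the initial and final puzzle pieces enters.
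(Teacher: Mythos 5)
Your argument is correct and is exactly the routine verification that the paper treats as ``immediate'' — observing that $S_A,S_B,S_C,S_D$ exhaust the perfect matchings (because the corner vertex of the initial triangle meets only $a,b$ and the corner vertex of the final triangle meets only $w,z$), substituting the four formulas from Proposition~\ref{prop:AbstractMatrixElts}, dividing by $x_{i_1}\cdots x_{i_d}$, and then checking the $2\times2$ product.

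One thing worth flagging: your matrix computation gives $\frac{\Y_{i_d} x_b x_z D_d}{x_{i_1}}$ for the fourth summand (consistent with $D_d$'s denominator in Proposition~\ref{prop:AbstractMatrixElts}, which involves $x_z$, not $x_w$), whereas the first displayed equality in the corollary as printed has $\frac{\Y_{i_d} x_b x_w D_d}{x_{i_1}}$. Your calculation is the correct one; the $x_w$ in the paper's intermediate display is a typo, and the final $\UR(\cdots)$ expression agrees with what you derived.
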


Proposition \ref{prop:AbstractMatrixElts} 
also implies the following.
\begin{prop} \label{prop:AbstractBandElts}
Let $G$ be an abstract snake graph with $d$ tiles, but 
with a labeling obtained by substituting 
$i_1$ for $w$, $a'$ for $z$, and $i_d$ for $b$
(the same labeling which is used for a band graph).
Write $M_d = \left[\begin{matrix} A_d & B_d \\ C_d & D_d \end{matrix} \right]$.
Then we have
\begin{eqnarray*}
A_d &=& \frac{\sum_{P\in S_A} x(P) h(P)}
{(x_{i_1} \cdots x_{i_{d-1}}) x_a x_{i_1}} \qquad \qquad
B_d = \frac{\sum_{P\in S_B} x(P)h(P)}
{(x_{i_2} \cdots x_{i_{d-1}}) x_{i_d} x_{i_1}} \\
C_d &=& \frac{\sum_{P\in S_C} x(P)h(P)}
{(x_{i_1} \cdots x_{i_d}) x_a x_{a'} \Y_{i_d}} \qquad \qquad
D_d = \frac{\sum_{P\in S_D} x(P)h(P)}
{(x_{i_2} \cdots x_{i_d}) x_{i_d} x_{a'} \Y_{i_d}},
\end{eqnarray*}
where 
$S_A$, $S_B$, $S_C$, and $S_D$ are the sets of perfect matchings of $G$ which 
respectively use the edges $a$ and $i_1$
from the first and last triangle, 
$i_d$ and $i_1$ from the first and last triangle, 
$a$ and $a'$ from the first and last triangle, and 
$i_d$ and $a'$ from the first and last triangle.
\end{prop}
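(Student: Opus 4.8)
The plan is to recognize that Proposition~\ref{prop:AbstractBandElts} is nothing more than Proposition~\ref{prop:AbstractMatrixElts} read through a change of edge labels, so the real content is setting up the correct dictionary and transporting the four identities. The key structural observation is that the matrix $M_d$ of Definition~\ref{abstractmatrix} is assembled from $m_1,\dots,m_{d-1}$, each of which involves only the variables $x_{a_j}$, $x_{i_j}$, $x_{i_{j+1}}$, $\Y_{i_j}$ and the shapes of the parallelograms $H_1,\dots,H_{d-1}$; it never mentions the labels $a$, $b$, $w$, $z$ of the outer edges of the initial and final triangles. In the same way, for any matching $P$ the height monomial $h(P)$ depends only on the diagonal labels $i_1,\dots,i_d$, and the minimal matching $P_-$ is fixed by the combinatorics of $G$ alone. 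Hence neither the graph $G$, nor $M_d$, nor $h(P)$, nor $P_-$ changes if we merely rename those four outer edges.

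Concretely, I would substitute $w\rightsquigarrow i_1$, $z\rightsquigarrow a'$, $b\rightsquigarrow i_d$ throughout the statement of Proposition~\ref{prop:AbstractMatrixElts}, leaving $a$ and all interior labels untouched. Under this substitution the four matching sets become $\{a,w\}\mapsto\{a,i_1\}$, $\{b,w\}\mapsto\{i_d,i_1\}$, $\{a,z\}\mapsto\{a,a'\}$, $\{b,z\}\mapsto\{i_d,a'\}$, which are precisely the sets $S_A,S_B,S_C,S_D$ described in the present proposition, with $a$ (resp.\ $i_d$) being the named edge of the first triangle and $i_1$ (resp.\ $a'$) that of the last. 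The weight $x(P)$ of such a matching in the relabelled graph is obtained from its weight in Proposition~\ref{prop:AbstractMatrixElts} by the identical substitution of variables, while $h(P)$ is literally unchanged; and each of the four denominators transforms exactly into the one asserted here, e.g.\ $(x_{i_1}\cdots x_{i_{d-1}})\,x_a x_w \mapsto (x_{i_1}\cdots x_{i_{d-1}})\,x_a x_{i_1}$ and $(x_{i_2}\cdots x_{i_d})\,x_b x_z \Y_{i_d}\mapsto (x_{i_2}\cdots x_{i_d})\,x_{i_d} x_{a'}\Y_{i_d}$. This yields all four formulas at once.

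The one place that calls for a little care — and the nearest thing here to an obstacle — is that the substitution makes some labels collide: the genuine (non-diagonal) edge of the last triangle formerly called $w$ now carries the label $i_1$, which is also the label borne by the (erased) diagonal of the first tile, and similarly $i_d$ is reused. So I would be explicit that the phrase ``uses the edge $i_1$ from the last triangle'' refers to that specific edge of $G$ and that its contribution to $x(P)$ is the indeterminate $x_{i_1}$; no cancellation or collision arises, since the $x$'s and $\Y$'s are treated as independent indeterminates throughout Section~\ref{comb-matching}. As a consistency check one can also note that the induction on $d$ proving Proposition~\ref{prop:AbstractMatrixElts} goes through verbatim with the band-graph labelling, because that argument only ever manipulates interior tiles; but the relabelling argument is cleaner and makes the bijections transparent.
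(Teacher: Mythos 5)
Your proposal is correct and coincides with the paper's own treatment: the paper presents Proposition~\ref{prop:AbstractBandElts} with no separate proof, merely noting that it follows from Proposition~\ref{prop:AbstractMatrixElts}, and the relabeling $w\rightsquigarrow i_1$, $z\rightsquigarrow a'$, $b\rightsquigarrow i_d$ you describe is exactly the intended reduction. Your further remark that $M_d$, $h(P)$, and the minimal matching depend only on the interior data (and hence are unaffected by renaming the outer edges, even though the new labels $i_1$ and $i_d$ now also appear on erased diagonals) is a correct and worthwhile justification of why this relabeling is legitimate.
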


\begin{corollary}\label{band-formula}
Let $G$ be an abstract band graph with $d$ tiles.  
Write $M_d = \left[\begin{matrix} A_d & B_d \\ C_d & D_d \end{matrix} \right]$.
Then 
\begin{align*}
\frac{\sum_P x(P)h(P)}{x_{i_1}\dots x_{i_d}} &=
\frac{x_{i_1} A_d}{x_{i_d}} +  
\Y_{i_d} x_a  C_d + \frac{\Y_{i_d} x_{i_d} D_d}{x_{i_1}}\\
&= 
\tr \bigg( 
\left[\begin{matrix} \frac{x_{i_1}}{x_{i_d}} & x_a \Y_{i_d} \\ 0 & \frac{\Y_{i_d} x_{i_d}}{x_{i_1}}\end{matrix}\right] 
\left[\begin{matrix} A_d & B_d \\ C_d & D_d \end{matrix}\right]
\bigg),
\end{align*}
where the sum at the left is over all good  matchings of $G$.
\end{corollary}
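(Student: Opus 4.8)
The plan is to deduce Corollary \ref{band-formula} from Proposition \ref{prop:AbstractBandElts} exactly as Corollary \ref{cor:permatch} was deduced from Proposition \ref{prop:AbstractMatrixElts}, the only new feature being that good matchings of a band graph correspond to perfect matchings of the associated snake graph that are \emph{compatible with identification of the cut edge}, which forces us to combine the $S_A$ and $S_D$ contributions rather than tracking all four. First I would recall that, by Definition \ref{abstractband} and Remark \ref{descend}, a good matching $\widetilde P$ of the abstract band graph $G$ is obtained from a perfect matching $P$ of the underlying snake graph $G'$ (with the band labeling: $w \mapsto i_1$, $z \mapsto a'$, $b \mapsto i_d$) by forgetting exactly one of the two edges $\{a, i_1\}$ or $\{i_d, a'\}$ that would be glued to the cut edge; concretely, the good matchings of $G$ are in bijection with $S_A(G') \sqcup S_C(G') \sqcup S_D(G')$ — the matchings in $S_B$, which use $i_d$ from the first triangle and $i_1$ from the last, do \emph{not} descend since the two edges to be identified carry incompatible status. (I should double-check the orientation conventions to confirm it is precisely $S_A$, $S_C$, $S_D$ that survive and not some other triple; this bookkeeping is where the one genuine subtlety lies.)

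Next I would assemble the weight/height generating function. Using Proposition \ref{prop:AbstractBandElts} to express the entries $A_d, C_d, D_d$ of $M_d$ in terms of $\sum_{P \in S_A} x(P)h(P)$, $\sum_{P\in S_C} x(P)h(P)$, and $\sum_{P\in S_D} x(P)h(P)$, one solves for those three matching-sums and adds them, being careful that when a good matching descends from $P \in S_A$ or $P \in S_D$ the weight $x(\widetilde P)$ drops the factor $x_{i_1}$ (for $S_A$ the forgotten edge is $\{a,i_1\}$, for... here I must check which edge is dropped and adjust the monomial denominators accordingly, since $S_C$ drops a factor $x_{a'}$ instead). Matching denominators this way gives
\begin{align*}
\frac{\sum_P x(P)h(P)}{x_{i_1}\cdots x_{i_d}} &= \frac{x_{i_1} A_d}{x_{i_d}} + \Y_{i_d} x_a C_d + \frac{\Y_{i_d} x_{i_d} D_d}{x_{i_1}},
\end{align*}
where the sum on the left is over all good matchings of the band graph. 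This is exactly the first displayed equality in the corollary, and the second is then a purely mechanical matrix-multiplication check: expanding
\[
\tr\!\left( \left[\begin{matrix} \frac{x_{i_1}}{x_{i_d}} & x_a \Y_{i_d} \\ 0 & \frac{\Y_{i_d} x_{i_d}}{x_{i_1}}\end{matrix}\right] \left[\begin{matrix} A_d & B_d \\ C_d & D_d \end{matrix}\right] \right) = \frac{x_{i_1}}{x_{i_d}} A_d + x_a \Y_{i_d} C_d + \frac{\Y_{i_d} x_{i_d}}{x_{i_1}} D_d,
\]
which is the middle expression. Note the off-diagonal zero in the left factor is what makes the trace pick out only $A_d$, $C_d$, $D_d$ and ignore $B_d$, matching the fact that $S_B$ does not contribute.

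The main obstacle is purely combinatorial/notational: correctly identifying which of $S_A, S_B, S_C, S_D$ descend to good matchings and tracking the precise monomial adjustments (which $x$-factor is absorbed into the cut edge in each case, and whether the height monomial changes — it should not, since identifying the cut edge does not create or destroy enclosed tiles). Once that dictionary is pinned down, everything else is a one-line linear-algebra computation. I would therefore devote the bulk of the write-up to a careful statement of the correspondence between good matchings of $\widetilde G$ and the three relevant classes of perfect matchings of the snake graph, perhaps with a reference back to Remark \ref{descend} and Figure \ref{fig band}, and then finish with the short algebraic identity above.
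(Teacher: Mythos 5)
Your proposal is correct and follows exactly the paper's own argument: identify good matchings of the band graph with the perfect matchings in $S_A\sqcup S_C\sqcup S_D$ from Proposition \ref{prop:AbstractBandElts} (noting $S_B$ fails because neither $a$ nor $a'$ is used, so the identified vertices get double-covered), track the dropped factor of $x_a$, and read off the trace. The hedges you flag are resolved just as you anticipate, and the paper's proof is a terser version of the same bookkeeping.
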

Note that Corollary \ref{cor:permatch} and Corollary \ref{band-formula}
immediately imply the first and second parts of Theorem \ref{match1}.

\begin{proof}
Consider the sets of matchings
$S_A$, $S_B$, $S_C$ and $S_D$ which
were defined in  Proposition \ref{prop:AbstractBandElts}.
Let $G$ be the snake graph from Proposition \ref{prop:AbstractBandElts},
and let $\widetilde{G}$ denote the band graph obtained from $G$
by identifying edge $a$ and $a'$.
Note that every perfect matching of $G$ in $S_A$ (respectively,
$S_C$ and $S_D$) descends to a good
matching of $\widetilde{G}$ if we remove the edge $a$ (respectively,
$a$ and $a'$) from it.
Moreover, every good matching of $\widetilde{G}$ can be obtained
uniquely from one of the sets $S_A$, $S_C$, and $S_D$.
(On the other hand, no matching $P$ from $S_B$ can give rise to 
a good matching of $\widetilde{G}$.) This completes the proof.
\end{proof}

\subsection{The standard $M$-path} 

To facilitate the proof of Theorem \ref{thm matrix-match}, 
we will associate to each arc $\gamma$ 
a {\emph standard $M$-path} $\rho_{\gamma}$, and show that the matrix
formula coming from that $M$-path has the same form as  
\eqref{matrixformula} from Corollary \ref{cor:permatch}.

\begin{definition}[Standard $M$-path for an arc]\label{def:standard}
Let $\gamma$ be a generalized arc that goes from point $P$ to point $Q$, 
crossing the arcs $\tau_{i_1},\dots, \tau_{i_d}$ in order.  
Label the first triangle $\Delta_0$ that $\gamma$ crosses with sides $a$, $b$, and $\tau_{i_1}$ in clockwise order so that $P$ is the intersection of the arcs $a$ and $b$; and label the last triangle $\Delta_d$ crossed with sides $w$, $z$, and $\tau_{i_d}$ in clockwise order, with $Q$ being the intersection of the arcs $w$ and $z$.  See Figure \ref{retracted}.

\begin{figure}
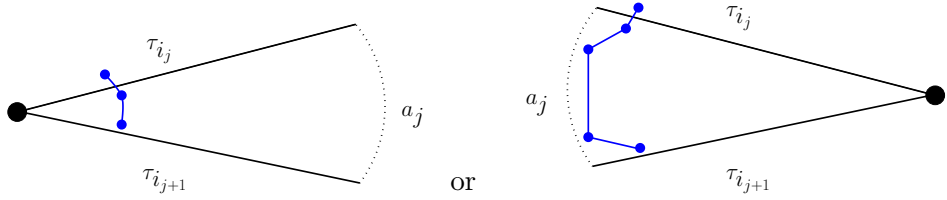

\scalebox{0.35}{\input{Trans1-v2.pstex_t}} \hspace{1em} or \hspace{1em}
\scalebox{0.35}{\input{Trans2-v2.pstex_t}}
\caption{Possible transitions between $\tau_{i_j}$ and $\tau_{i_{j+1}}$ in the standard $M$-path.
}
\label{trans}
\end{figure}

We define $\rho_{\gamma}$ so that it starts and ends
at points $v^{\pm}_{P,a}$ and $v^{\pm}_{Q,z}$,
where the sign is chosen so that the starting and ending points
lie inside the triangles $\Delta_0$ and $\Delta_d$.
The path $\rho_{\gamma}$ starts with an elementary step of type 3 along arc $a$,
followed by a step of type 1 between arcs $a$ and $\tau_{i_1}$.
The segment we have defined so far ends at a point of the form $v^{\pm}_{*, \tau_{i_1}}$
(and does not cross $\tau_{i_1}$).

Subsequently, we define the sequence of elementary steps in $\rho_{\gamma}$ based on whether 
the arc $\tau_{i_{j+1}}$ lies clockwise or counterclockwise from the arc $\tau_{i_j}$ 
in the unique triangle containing the corresponding segment of $\rho_{\gamma}$.
If it is counterclockwise, we proceed with the definition of $\rho_{\gamma}$ by 
adding an elementary step of type 2 which crosses $\tau_{i_j}$ and then 
a step of type 1 
between $\tau_{i_j}$ and $\tau_{i_{j+1}}$.  
If the orientation is clockwise, we 
again begin with a step of type 2 which crosses $\tau_{i_j}$, however, we then have a step of type 1 
between $\tau_{i_j}$ and $a_j$.  We then follow a step of type 3 in the positive direction along $a_j$, succeeded 
by a step of type 1 between $a_j$ and $\tau_{i_{j+1}}$.
In both of these cases, we do not cross or 
touch $\tau_{i_{j+1}}$. 
This progression keeps the path in the same relative position after each double or quadruple
step, and as a consequence, we can iterate our construction.  
See Figure \ref{trans}.  

After $(d-1)$ transitions as in Figure \ref{trans}, the path is at a point $v^{\pm}_{*,\tau_{i_d}}$, on the side closer to the arc labeled $z$,
and is about to cross $\tau_{i_d}$.  
We then add an elementary  step of type 2 which crosses $\tau_{i_d}$,  a step of type 1, and then
a step of type 3 which travels along $z$ to the point $v^{\pm}_{Q,z}$.
We call this particular $M$-path $\rho_{\gamma}$ the \emph{standard $M$-path associated to $\gamma$}.
\end{definition}

\begin{remark}
We remark that in the above definition, if there are three arcs $\tau_{i_j}$, $\tau_{i_{j+1}}$,
and $\tau_{i_{j+2}}$ such that 
$\tau_{i_{j+1}}$ is in the clockwise direction from  $\tau_{i_j}$ and 
$\tau_{i_{j+2}}$ is in the clockwise direction from  $\tau_{i_{j+1}}$, then the 
standard $M$-path $\rho_{\gamma}$ will have some back-tracking: there will be two consecutive 
steps of type 3 which travel in opposite directions  along
$\tau_{i_{j+1}}$. 
\end{remark}

\begin{figure}
\scalebox{0.4}{\input{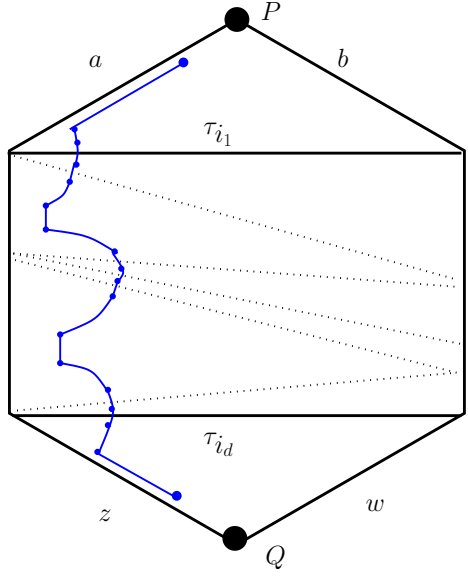}}
\caption{The standard $M$-path $\rho_{\gamma}$ of an arc $\gamma$.
}
\label{retracted}
\end{figure}

\begin{definition}[Standard $M$-path for a closed loop]
Let $\gamma$ be a closed loop which crosses exactly $d$ arcs of $T$ (counted with multiplicity).
Choose a triangle $\Delta$ in $T$ such that two of its arcs are crossed by $\gamma$.
Label those two arcs $\tau_{i_1}$ and $\tau_{i_d}$, where 
$\tau_{i_1}$ is in the clockwise direction from $\tau_{i_d}$.  
Label the third side of $\Delta$ by $a$.  Let $p$ be a point on 
$\gamma$ which lies in $\Delta$ and 
has the form $v^{\pm}_{*,\tau_{i_1}}$.  Finally, let 
$\tau_{i_1},\dots, \tau_{i_d}$ denote the ordered sequence of arcs 
which are crossed by $\gamma$, when one travels from $p$ away from $\Delta$.
We define the \emph{standard $M$-path $\rho_{\gamma}$ associated to $\gamma$} 
exactly as in Definition \ref{def:standard},  starting and ending 
at the point $p$
and travelling along elementary steps based on whether $\tau_{i_{j+1}}$
is counterclockwise or clockwise from $\tau_{i_j}$.  In this case,
we need to consider indices modulo $n$: note that the last elementary 
steps of $\rho_{\gamma}$ will be determined by the fact that 
the arc $i_1$ is in the clockwise direction from $i_d$.
See Figure \ref{retracted2}.
\end{definition}

\begin{figure}
\scalebox{0.4}{\input{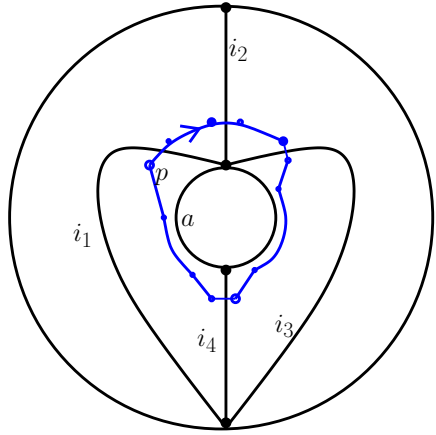}}
\caption{The standard $M$-path $\rho_{\gamma}$ of a loop $\gamma$,
which crosses
$d=4$ arcs of $T$
}
\label{retracted2}
\end{figure}

We now turn to the proof of Theorem \ref{thm matrix-match}.
\begin{proof}
First we consider the case that $\gamma$ is a generalized arc.  Consider its 
standard $M$-path $\rho_{\gamma}$, and recall Definition \ref{matrixforarc}, which 
gives an algorithm for associating a product of matrices to a concatenation of 
elementary steps such as $\rho_{\gamma}$.  Note that 
the first two steps of this path correspond to the matrix product $$\left[\begin{matrix} 1 & 0 \\ \frac{x_b}{x_a x_{i_1}} & 1 \end{matrix}\right]\left[\begin{matrix} 0 & x_a \\ -\frac{1}{x_a} & 0 \end{matrix}\right] = \left[\begin{matrix} 0 & x_a \\ -\frac{1}{x_a} & \frac{x_b}{x_{i_1}}\end{matrix}\right],$$

\noindent and the last three steps of $\rho_{\gamma}$ correspond to $$\left[\begin{matrix} 0 & x_z \\ -\frac{1}{x_z} & 0 \end{matrix}\right] 
\left[\begin{matrix} 1 & 0 \\ \frac{x_w}{x_{i_d} x_z} & 1 \end{matrix}\right]
\left[\begin{matrix} 1 & 0 \\ 0 & \Y_{i_d} \end{matrix}\right] = \left[\begin{matrix} \frac{x_w}{x_{i_d}} & x_z \Y_{i_d} \\ -\frac{1}{x_z} & 0 \end{matrix}\right].$$

In between, the matrix for the portion between $\tau_{i_j}$ and $\tau_{i_{j+1}}$ (for $1 \leq j \leq d-1$) corresponds respectively to 
\begin{eqnarray} \label{trans-matrices} 
&&\left[\begin{matrix} 1 & 0 \\ \frac{x_{a_j}}{x_{i_j}x_{i_{j+1}}} & 1 \end{matrix}\right]
\left[\begin{matrix} 1 & 0 \\ 0 & \Y_{i_j} \end{matrix}\right] = 
\left[\begin{matrix} 1 & 0 \\ \frac{x_{a_j}}{x_{i_j}x_{i_{j+1}}} & \Y_{i_j} \end{matrix}\right]
\mathrm{~~~~or~~~~}  \\
%
\label{trans-matrices2}
&&\left[\begin{matrix} 1 & 0 \\ \frac{x_{i_j}}{x_{a_j}x_{i_{j+1}}} & 1 \end{matrix}\right]
\left[\begin{matrix} 0 & x_{a_j} \\ -\frac{1}{x_{a_j}} & 0 \end{matrix}\right] 
\left[\begin{matrix} 1 & 0 \\ \frac{x_{i_{j+1}}}{x_{a_j}x_{i_{j}}} & 1 \end{matrix}\right]
\left[\begin{matrix} 1 & 0 \\ 0 & \Y_{i_j} \end{matrix}\right] 
= \left[\begin{matrix} \frac{x_{i_{j+1}}}{x_{i_j}} & x_{a_j} \Y_{i_j} \\ 0 & \frac{x_{i_j} \Y_{i_j}}{x_{i_{j+1}}} \end{matrix}\right],
\end{eqnarray}
depending on whether $\tau_{i_{j+1}}$ lies counterclockwise or clockwise from $\tau_{i_j}$.  In both cases, we let $a_j$ be the label of the third side in the triangle bounded by $\tau_{i_j}$ and $\tau_{i_{j+1}}$ which inscribes the appropriate part of the arc $\gamma$.

Applying Definition \ref{matrixforarc} to $\rho_{\gamma}$,   
we find that 
\begin{equation} \label{TripMatrix} 
\MatrixA_{\gamma,T} = \UR \bigg(\left[\begin{matrix} \frac{x_w}{x_{i_d}} & x_z \Y_{i_d} \\ -\frac{1}{x_z} & 0 \end{matrix}\right] 
\left[\begin{matrix} A & B \\ C & D \end{matrix}\right]
\left[\begin{matrix} 0 & x_a \\ -\frac{1}{x_a} & \frac{x_b}{x_{i_1}}\end{matrix}\right]\bigg),\end{equation}
where the middle matrix is obtained by multiplying together a sequence of matrices of the form
$$\left[\begin{matrix} 1 & 0 \\ \frac{x_{a_j}}{x_{i_j}x_{i_{j+1}}} & \Y_{i_j} \end{matrix}\right] \mathrm{~~or~~} \left[\begin{matrix} \frac{x_{i_{j+1}}}{x_{i_j}} & x_{a_j} \Y_{i_j} \\ 0 & \frac{x_{i_j} \Y_{i_j}}{x_{i_{j+1}}} \end{matrix}\right],$$ for $1 \leq j \leq d-1$.

Note that this has precisely the same form as \eqref{matrixformula}, 
and therefore by Corollary \ref{cor:permatch},
$\MatrixA_{\gamma,T}$ has an interpretation in terms of the perfect matchings
of some abstract snake graph $G$.
Moreover, if one compares Definitions 
\ref{abstractsnake} and \ref{abstractmatrix} with the construction of 
snake graphs associated to arcs in Section \ref{sect graph}, it is clear that 
the abstract snake graph $G$ is precisely the snake graph 
$G_{T,\gamma}$ associated to the arc $\gamma$.  This completes the proof of 
Theorem \ref{thm matrix-match} when $\gamma$ is a generalized arc.

Now we consider the case that $\gamma$ is a closed loop.  Consider its 
standard $M$-path $\rho_{\gamma}$.  Note that as before, for $j$ from $1$ to $d-1$,
the matrix for the portion of the path from $\tau_{i_j}$ to $\tau_{i_{j+1}}$  
corresponds respectively to \eqref{trans-matrices} or \eqref{trans-matrices2},
depending on whether $\tau_{i_{j+1}}$ lies counterclockwise or clockwise from $\tau_{i_j}$.  
Since $\tau_{i_1}$ is in the clockwise direction from $\tau_{i_d}$ (by construction), 
the last few steps of 
$\rho_{\gamma}$ correspond to 
$\left[\begin{matrix} \frac{x_{i_1}}{x_{i_d}} & x_a \Y_{i_d} \\ 0 & \frac{\Y_{i_d} x_{i_d}}{x_{i_1}}\end{matrix}\right].$ 

Therefore from Definition \ref{matrixforarc} we have that  
\begin{equation} \label{TripMatrix2} 
\MatrixA_{\gamma,T} = 
\tr \bigg( 
\left[\begin{matrix} \frac{x_{i_1}}{x_{i_d}} & x_a \Y_{i_d} \\ 0 & \frac{\Y_{i_d} x_{i_d}}{x_{i_1}}\end{matrix}\right] 
\left[\begin{matrix} A & B \\ C & D \end{matrix}\right]
\bigg),
\end{equation}
where the rightmost matrix is obtained by multiplying together a sequence of matrices of the form
$$\left[\begin{matrix} 1 & 0 \\ \frac{x_{a_j}}{x_{i_j}x_{i_{j+1}}} & \Y_{i_j} \end{matrix}\right] \mathrm{~~or~~} \left[\begin{matrix} \frac{x_{i_{j+1}}}{x_{i_j}} & x_{a_j} \Y_{i_j} \\ 0 & \frac{x_{i_j} \Y_{i_j}}{x_{i_{j+1}}} \end{matrix}\right],$$ for $1 \leq j \leq d-1$.

Note that this has precisely the same form as the expression in Corollary \ref{band-formula},
and therefore it follows that 
$\MatrixA_{\gamma,T}$ has an interpretation in terms of the good matchings
of some abstract band graph $G$.
Moreover, if one compares Definitions 
\ref{abstractband} and \ref{abstractmatrix} with the construction of 
band graphs associated to arcs in Definition \ref{def band}, it is clear that 
the abstract band graph $G$ is precisely the band graph 
$\widetilde{G}_{T,\gamma}$ associated to the arc $\gamma$.  This completes the proof of 
Theorem \ref{thm matrix-match}.  
\end{proof}

\begin{example}
Consider the loop $\gamma$ and ideal triangulation as in Example \ref{ExampleBand}.  Note that the standard $M$-path $\rho_\gamma$ for this $\gamma$ is illustrated in Figure \ref{retracted2}, where the arcs of triangulation are labelled slightly differently.

Using the arc labels from Figure \ref{fig Ex-band}, we see in this case that $\hat{\chi}_{\gamma,T} = |\tr(M(\rho_{\gamma}))|$, where 
\begin{eqnarray*}
M(\rho_{\gamma}) &=& 
\left[ \begin{matrix} \frac{x_{1}}{x_{4}} & b_2 \Y_{4} \\ 0 & \frac{x_{4}\Y_{4}}{x_{1}} \end{matrix}\right] 
\left[ \begin{matrix} \frac{x_{4}}{x_{3}} & b_3 \Y_{3} \\ 0 & \frac{x_{3}\Y_{3}}{x_{4}} \end{matrix}\right]
\left[ \begin{matrix} 1 & 0 \\ \frac{b_4}{x_{2}x_{3}} & \Y_{2} \end{matrix}\right]
\left[ \begin{matrix} 1 & 0 \\ \frac{b_1}{x_{1}x_{2}} & \Y_{1} \end{matrix}\right]
\\ &=&  \left[\begin{matrix}
\frac{x_1^2x_2x_4 + \Y_3x_1^2b_1b_4 + \Y_3\Y_4x_1x_3b_2b_4 + \Y_2\Y_3x_1x_3b_1b_3 + \Y_2\Y_3\Y_4x_3^2b_1b_2}{x_1x_2x_3x_4} &
\frac{\Y_1\Y_2\Y_3(x_1b_3 + \Y_4x_3b_2)}{x_4}\\
\frac{\Y_3\Y_4(x_1b_4 + \Y_2x_3b_1)}{x_1^2x_2} & \frac{\Y_1\Y_2\Y_3\Y_4x_3}{x_1}
\end{matrix}\right] .
\end{eqnarray*}
Computing the trace of this matrix product, we see that $X_\gamma^T$ and $\chi_{\gamma,T} = \hat{\chi}_{\gamma,T}|_{\Y_i = y_i,b_1=1}$ agree.  The last equality follows from the absence of self-folded triangles in the triangulation $T$ and the fact that the $b_i$'s label boundary segments.
\end{example}

\subsection{Signs of $\hat{\chi}_{\gamma,T}$, $\overline{\chi}_{\gamma,T}$, and $\chi_{\gamma,T}$}
\label{sign-chi}

Recall that the signs of three algebraic quantities defined in the last section, Definition \ref{def:chi}, depended on the choice of the $M$-path 
$\rho$ associated to the generalized arc or loop $\gamma$.  

\begin{lemma} \label{standard-sign}
If we use the standard $M$-path, Definition \ref{def:standard}, for the generalized arc or loop $\gamma$ with no contractible kinks, then every coefficient of $\hat{\chi}_{\gamma,T}$, $\overline{\chi}_{\gamma,T}$, and $\chi_{\gamma,T}$ is positive.
\end{lemma}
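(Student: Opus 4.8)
The plan is to trace carefully through the matrix products that arise in the standard $M$-path of Definition \ref{def:standard} (and its analogue for loops) and show that, at every stage, the entries that contribute to $\UR$ (for an arc) or $\tr$ (for a loop) come out with a consistent sign, so that no cancellation occurs and the absolute value does nothing but strip an overall sign. In fact this positivity was already essentially established inside the proof of Theorem \ref{thm matrix-match}: there we showed that the standard $M$-path produces a matrix product of exactly the shape appearing in Corollary \ref{cor:permatch} (for arcs) or Corollary \ref{band-formula} (for loops), namely
\[
\hat\chi_{\gamma,T} = \UR\!\bigg(\left[\begin{matrix} \frac{x_w}{x_{i_d}} & x_z \Y_{i_d} \\ -\frac{1}{x_z} & 0 \end{matrix}\right] M_d \left[\begin{matrix} 0 & x_a \\ -\frac{1}{x_a} & \frac{x_b}{x_{i_1}}\end{matrix}\right]\bigg),
\]
where $M_d = m_{d-1}\cdots m_1$ and each $m_j$ is one of the two matrices of Definition \ref{abstractmatrix}, \emph{each of which has all entries nonnegative} (being monomials in the $x$'s and $\Y$'s). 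Thus the first step is to observe that $M_d$ itself has all entries nonnegative monomials, hence $A_d, B_d, C_d, D_d \geq 0$ coefficient-wise.

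The second step is to expand the triple product and read off the $\UR$ entry. Carrying out the multiplication gives
\[
\UR(\cdots) = \frac{x_a x_w A_d}{x_{i_d}} + \frac{x_b x_w B_d}{x_{i_1}x_{i_d}} + \Y_{i_d} x_a x_z C_d + \frac{\Y_{i_d} x_b x_w D_d}{x_{i_1}},
\]
exactly as displayed in the first line of Corollary \ref{cor:permatch}. Since each summand is a product of $A_d,B_d,C_d,D_d$ with positive monomials in the $x$'s and $\Y$'s, the whole expression is a Laurent polynomial with all coefficients positive; in particular it is itself positive, so $|\UR(M(\rho_\gamma))| = \UR(M(\rho_\gamma))$ and $\hat\chi_{\gamma,T}$ has all coefficients positive. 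The loop case is identical using Corollary \ref{band-formula}: $\tr$ of the relevant product is $\frac{x_{i_1}A_d}{x_{i_d}} + \Y_{i_d}x_a C_d + \frac{\Y_{i_d}x_{i_d}D_d}{x_{i_1}}$, again a sum of positive monomials times nonnegative quantities.

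For $\overline{\chi}_{\gamma,T}$, the only difference is that each elementary step of type $2$ contributes a factor $\Y_\tau^{\pm 1/2}$ instead of $\Y_\tau$ or $1$; since this merely rescales each relevant matrix entry by a positive (Laurent) monomial in the $\sqrt{\Y_\tau}$, positivity of coefficients is preserved, so the same argument gives $|\UR(\overline M(\rho_\gamma))| = \UR(\overline M(\rho_\gamma))$ with positive coefficients (and likewise for the trace). Finally, for $\chi_{\gamma,T} = \Phi(\hat\chi_{\gamma,T})$, I would invoke the analysis already carried out in the proof of Lemma \ref{F-polynomial}: the specialization $\Phi$ sends each $\Y_{\tau_i}$ to a \emph{ratio of $y$'s with positive numerator}, and in the snake/band graph coming from a generalized arc or loop, whenever a tile labeled $r$ (a radius) contributes, an adjacent tile labeled $\ell$ also contributes, so the denominators $y_{r^{(p)}}$ always cancel into genuine monomials; in any case $\Phi$ of a positive Laurent polynomial in the $\Y$'s is a positive Laurent polynomial in the $y$'s, since $\Phi$ sends each generator to a positive Laurent monomial. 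Hence every coefficient of $\chi_{\gamma,T}$ is positive as well.

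\textbf{Main obstacle.} The genuinely delicate point is that the argument above uses the \emph{standard} $M$-path, and one must be sure the standard $M$-path is always well-defined and legitimately an $M$-path even in the presence of self-folded triangles and of the back-tracking noted in the remark after Definition \ref{def:standard}; the back-tracking produces a factor $\left[\begin{smallmatrix} 0 & -x_\tau \\ 1/x_\tau & 0\end{smallmatrix}\right]\left[\begin{smallmatrix} 0 & x_\tau \\ -1/x_\tau & 0\end{smallmatrix}\right] = -I$, so one must check that such $-I$ factors pair up or are absorbed into the overall sign rather than flipping signs of individual monomials. This is exactly why the statement is phrased for the standard $M$-path specifically and why it is packaged as a separate lemma: the bookkeeping that the matrices $m_j$ of Definition \ref{abstractmatrix} really do capture the standard $M$-path's transition matrices \eqref{trans-matrices}, \eqref{trans-matrices2} — including the clockwise case with its internal $\left[\begin{smallmatrix} 0 & x_{a_j} \\ -1/x_{a_j} & 0\end{smallmatrix}\right]$ — without stray signs is the content of the computation in the proof of Theorem \ref{thm matrix-match}, and the present lemma is really just the observation that those computations never introduced a minus sign into any monomial coefficient.
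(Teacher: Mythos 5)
Your proposal is correct and follows essentially the same route as the paper: both rely on the triple-product decomposition already obtained in the proof of Theorem \ref{thm matrix-match} (equations \eqref{TripMatrix} and \eqref{TripMatrix2}), observe that the middle factor $M_d$ has nonnegative monomial entries, and note that the only negative entries (the lower-left entries of the two outer matrices in the arc case) never contribute to $\UR$ or $\tr$; the reduced-matrix and $\Phi$-specialized cases then follow because rescaling by $\sqrt{\Y_\tau}$ and applying $\Phi$ both send positive-coefficient Laurent expressions to positive-coefficient Laurent expressions. One small arithmetic slip in your final paragraph: the product $\left[\begin{smallmatrix} 0 & -x_\tau \\ 1/x_\tau & 0 \end{smallmatrix}\right]\left[\begin{smallmatrix} 0 & x_\tau \\ -1/x_\tau & 0 \end{smallmatrix}\right]$ equals $+I$, not $-I$ (the two type-3 matrices are inverses, cf.\ Remark \ref{rem:not_in_SL2}), so the back-tracking concern you raise dissolves even more readily than you allow.
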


For the purposes of this lemma, we consider a contractible loop $\gamma$ to contain a \emph{contractible kink}.  In particular, note that in this case that the signs of the three quantities are \emph{negative}: $\hat{\chi}_{\gamma,T} = \overline{\chi}_{\gamma,T} = \chi_{\gamma,T} = \tr\left(\left[\begin{matrix} -1 & 0 \\ 0 & -1\end{matrix}\right]\right) = -2$ for such a loop.

\begin{proof}
Given $\gamma$, which is a generalized arc or loop which has no contractible kinks, let $\rho_{\gamma}$ denote a corresponding standard $M$-path.  By inspection, the matrices $M(\rho_{\gamma})$ and $\overline{M}(\rho_{\gamma})$ can be decomposed into a product of matrices where almost all entries are positive.  The only negative entries appearing are the lower-left entries of the first and third matrices in \eqref{TripMatrix}; these entries do not affect the upper-right entry of this triple product.  In the case when $\gamma$ is a loop, we use \eqref{TripMatrix2} instead, where no negative entries appear.  
\end{proof}

If $\rho$ is a \emph{non-standard} $M$-path for the generalized arc or loop $\gamma$, then $\rho$ can be deformed into a standard $M$-path $\rho_{\gamma}$ by the local adjustments utilized in the proof of Lemma \ref{Lem MA}:

\begin{enumerate}
\item going from $u$ to $v$ in Figure \ref{fig infinitesimal} clockwise instead of counterclockwise,
\item reversing the order of a step of type 2 crossing an arc $\tau_i \in T$ and a step of type 3 moving along the arc $\tau_i$,
\item rotating around an $h_m$ using a combination of steps of types 1 and 2,
\item reselecting the starting or ending point of the $M$-path corresponding to a closed loop.   
\end{enumerate}  

Each local adjustment of type (1) or (2) changes the sign of the corresponding matrix but does not otherwise affect the matrix.  The local adjustments of type (3) and (4) does not affect $\UR$ or $\tr$ of the resulting matrix product.

\section{Skein relations for generalized arcs and closed loops}\label{sec:skein}

 In this section we prove certain {\it skein relations}, which 
 give multiplication formulas for the cluster algebra elements
 corresponding to  generalized arcs and loops, i.e. 
 arcs and loops which are allowed to have self-intersections.
 We work in the setting of principal coefficients.
 To prove these results, we use the matrix formulas from 
 Section \ref{sec matrix}.
 The proofs of skein relations then follow from the matrix identities in 
 Lemma \ref{matrix-identity}.
Throughout this section we fix a marked surface $(S,M)$, an ideal triangulation 
$T=(\tau_1,\dots,\tau_n)$, 
and the cluster algebra $\A=
\Aprin(B_T)$ which has principal
coefficients with respect to $T$.

\begin{definition}[\emph{Smoothing}]\label{def:smoothing} Let $\gamma, \gamma_1$, and $\gamma_2$ be generalized arcs or closed loops such that we have
one of the following two cases: 

\begin{enumerate}
 \item $\gamma_1$ crosses $\gamma_2$ at a point $x$,
  \item $\gamma$ has a self-intersection at a point $x$.
\end{enumerate}

\noindent We define the \emph{smoothing of $C=\{\gamma_1, \gamma_2\}$ or 
$C=\{\gamma\}$ at the point $x$} to be the pair of configurations $C_+$ 
and $C_-$, where 
$C_+$ (respectively, $C_-$) is the same as $C$ except for the local change that replaces the crossing or self-intersection {\Large $\times$} with the pair of segments {\Large $~_\cap^{\cup}$} (resp., {\large $\supset \subset$}).
\end{definition}   

Note that in the case that two generalized arcs cross, each resulting configuration contains two generalized arcs; if two closed loops cross, each resulting configuration contains only a single closed loop (necessarily with self-intersections); and if a generalized arc and a closed loop cross, then each configuration is a single generalized arc.  When a self-intersection is smoothed, one of the two resulting configurations has an extra closed loop.  See Figures \ref{fig:orient}, \ref{skein3figure}, and \ref{skein5figure}.

\begin{remark}Since we consider generalized arcs up to isotopy, we may assume that each intersection  is transverse.  In particular, there are no triple intersections.
\end{remark}

\begin{figure}
\input{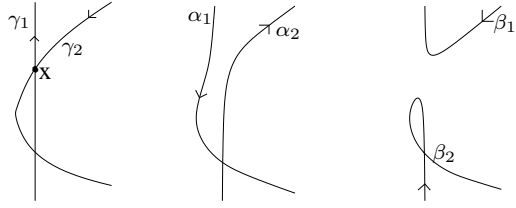}
\caption{Skein relation for two generalized arcs crossing}\label{fig:orient}
\end{figure}

\begin{figure}
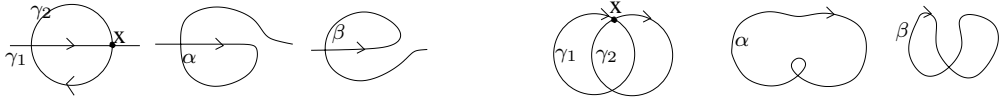

\input{Skein3.pstex_t} \hspace{1.4cm} \input{Skein4.pstex_t}
\caption{Skein relation involving a closed loop}\label{skein3figure}
\end{figure}

\begin{figure}
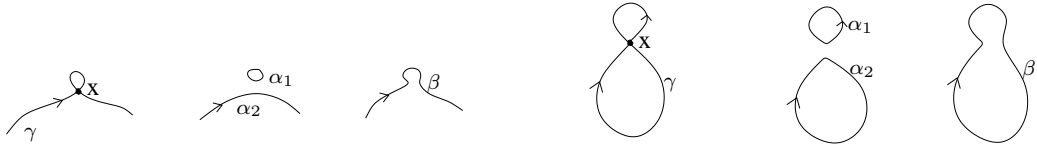

\input{Skein5.pstex_t} \hspace{1.4cm} \input{Skein6.pstex_t}
\caption{Skein relation resolving a self-intersection}\label{skein5figure}
\end{figure}

\subsection{Skein relations for unpunctured surfaces}
When $(S,M)$ is a surface without punctures, there are 
simple formulas for the skein relations.  However, the formulas
are somewhat more complicated to write down in the presence of punctures.  For
this reason, we start by writing down the formula in the unpunctured
case, and give the appropriate generalization (together
with proofs of all results) in Section \ref{withpunctures}.

\begin{Def} \label{elem-laminate}
Let $\gamma$ be an arc in an unpunctured surface $(S,M)$.  Denote by
$L_{\gamma}$ the  curve which 
runs along
$\gamma$ within a small neighborhood of it.  Such a curve is known as the \emph{elementary lamination} associated to $\gamma$, see \cite[Definition 16.2]{FT}.  If $\gamma$
has an endpoint $a$ on a (circular) component $C$ of the boundary of $S$,
then $L_{\gamma}$ begins at a point $a'\in C$ located near $a$ in
the counterclockwise direction, and proceeds along $\gamma$
as shown in Figure \ref{laminate}. 
In particular, if $T=(\tau_1,\dots,\tau_n)$, then we 
let $L_i$ denote $L_{\tau_i}$.
\end{Def}

\begin{figure} \begin{center}
\input{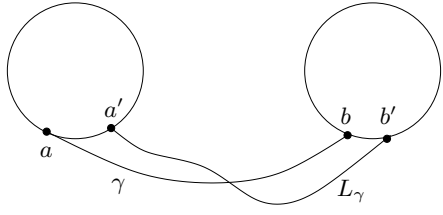}
\end{center}
\caption{Elementary lamination for an arc whose endpoints are on the boundary of $S$}
\label{laminate}
\end{figure}

\begin{prop} \label{up:skein1}
Let $\gamma_1$ and $\gamma_2$ be two generalized arcs
which intersect each other at least once; let $x$ be a point of intersection;
and let $\alpha_1$, $\alpha_2$
and $\beta_1$, $\beta_2$ be the two pairs of arcs obtained by 
smoothing $\gamma_1$
and $\gamma_2$ at $x$.
We then obtain the following identity:
\begin{equation} \label{u:skein-eq1}
\chi_{\gamma_1,T}~ \chi_{\gamma_2,T} = \pm \chi_{\alpha_1,T} ~\chi_{\alpha_2,T}
\prod_{i=1}^n \Y_i^{(c_i-a_i)/2}
 \pm \chi_{\beta_1,T} ~\chi_{\beta_2,T} 
\prod_{i=1}^n \Y_i^{(c_i-b_i)/2},
\end{equation}
where
$c_i=e(\gamma_1,L_i)+e( \gamma_2, L_i)$,
$a_i=e(\alpha_1, L_i)+e(\alpha_2, L_i)$, and
$b_i=e(\beta_1,L_i)+e(\beta_2,L_i)$.
\end{prop}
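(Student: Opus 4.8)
\textbf{Proof plan for Proposition \ref{up:skein1}.}

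The plan is to reduce the skein relation to a purely matrix-theoretic identity, exactly as advertised at the start of Section \ref{sec:skein}: the proposition follows from an identity of $2\times 2$ matrices (the promised Lemma \ref{matrix-identity}) once we know that the cluster algebra element attached to each arc is (up to sign and a monomial in the $\Y_i$) the trace or upper-right entry of a product of matrices associated to an $M$-path. First I would set up the local picture at the crossing point $x$. Since $\gamma_1$ and $\gamma_2$ meet transversally at $x$, I can choose $M$-paths $\rho_1$ for $\gamma_1$ and $\rho_2$ for $\gamma_2$ that pass through a common small neighborhood of $x$ in ``general position'': by Lemma \ref{Lem MA} and Remark \ref{ReducedMats}, the quantities $\overline{\chi}_{\gamma_i,T}$ are independent of the choice of $M$-path (up to sign, in the reduced normalization even up to an overall $\pm$), so I am free to pick paths that factor through the crossing. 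Writing $\overline{M}(\rho_1) = Y_1 X \cdot$ (stuff) and $\overline{M}(\rho_2) = Y_2 X' \cdot$ (stuff) where $X,X'$ are the local pieces at $x$, the smoothings $C_+=\{\alpha_1,\alpha_2\}$ and $C_-=\{\beta_1,\beta_2\}$ correspond precisely to the two ways of re-routing the two strands through the crossing, i.e.\ to two different products of the same set of elementary-step matrices.

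The key algebraic input is then the following $SL_2$ identity: if $\overline{M}(\rho_1)$ and $\overline{M}(\rho_2)$ are written in $SL_2(\R)$ using the reduced matrices of Definition \ref{def:reduced_matrix}, then at the crossing one has a relation of the shape
\[
\tr\!\big(\overline{M}(\rho_1)\big)\,\tr\!\big(\overline{M}(\rho_2)\big)
= \tr\!\big(\overline{M}(\rho_{\alpha})\big) \pm \tr\!\big(\overline{M}(\rho_{\beta})\big),
\]
or the analogous statement with $\UR$ in place of $\tr$ for generalized arcs, which is just the classical identity $\tr(U)\tr(V) = \tr(UV) + \tr(UV^{-1})$ combined with $\det = 1$. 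This is where the switch to the reduced (i.e.\ $SL_2(\R)$) matrices is essential, since the identity $\tr(U)\tr(V)=\tr(UV)+\tr(UV^{-1})$ relies on $V\in SL_2$. Concretely I would prove Lemma \ref{matrix-identity} by taking $U$ and $V$ to be the reduced matrices of the two $M$-paths, observing that $UV$ and $UV^{-1}$ are (up to sign) the reduced matrices of $M$-paths for the two smoothed configurations, and reading off the traces. One must check that the local crossing piece $X$ is anti-diagonal of determinant $1$ — this follows because in reduced form an $M$-path crossing an arc between two strands is a product of a step of type $3$ and (half of) a step of type $2$, all of which are in $SL_2(\R)$ by Remark \ref{rem:not_in_SL2}.

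Finally I would translate the $SL_2$ identity back into the cluster algebra. Passing from $\overline{M}$ to $M$ reintroduces, for each arc $\tau_i$ crossed, a factor of $\Y_i^{1/2}$; since $e(\gamma,L_i)$ counts crossings of $\gamma$ with the elementary lamination $L_i$, which by the construction of the $M$-path equals the number of times the $M$-path executes a type-$2$ step across $\tau_i$, the bookkeeping of these square roots is exactly the monomial $\prod_i \Y_i^{(c_i-a_i)/2}$ (resp.\ $\prod_i \Y_i^{(c_i-b_i)/2}$) appearing in \eqref{u:skein-eq1}: $c_i$ counts the total type-$2$ steps on the left-hand side, $a_i$ and $b_i$ count them on the two right-hand configurations, and the half-integer exponents are integers because $c_i$, $a_i$, $b_i$ all have the same parity (the smoothings do not change the homology class of the strands relative to $L_i$). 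Applying $\Phi$ and using Theorem \ref{thm matrix-match} (which identifies $\chi_{\gamma,T} = \Phi(\hat\chi_{\gamma,T})$ with the matching formula, hence with $X_{\gamma}^T$) converts the $\hat\chi$'s into $\chi$'s and yields \eqref{u:skein-eq1}, with the $\pm$ signs absorbing the sign ambiguity in the reduced-matrix normalization and the choice of $M$-paths, controlled by Lemma \ref{standard-sign}. The main obstacle I anticipate is the last step's sign and parity bookkeeping: verifying that the two smoothed configurations really are realized by the matrix products $UV$ and $UV^{-1}$ up to the correct $\pm$, and that $c_i - a_i$ and $c_i - b_i$ are even, requires a careful case analysis of how the strands sit relative to the triangulation near $x$ (and, in the punctured case deferred to Section \ref{withpunctures}, near any self-folded triangles the strands traverse).
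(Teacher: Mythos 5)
Your overall strategy is the paper's: write the cluster algebra element as the $\UR$ (or $\tr$) of a reduced $SL_2(\mathbb{R})$ matrix product, use a matrix identity to resolve the crossing, and then restore the square-root $\Y$-factors and identify the exponent with intersection numbers against the elementary laminations. But there are two genuine gaps that make the argument as written not go through for Proposition~\ref{up:skein1} specifically.

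First, the key matrix identity you cite is of the wrong shape. The two-matrix trace identity $\tr(U)\tr(V)=\tr(UV)+\tr(UV^{-1})$ — and its ``$\UR$ in place of $\tr$'' analogue that you allude to — produces a right-hand side with \emph{one} factor per term, so it proves a skein relation of the form $\chi_1\chi_2 = \chi_\alpha \pm \chi_\beta$. That is what happens when at least one of the curves being smoothed is a closed loop (Propositions~\ref{prop:skein2}, \ref{prop:skein3}, using identities~\eqref{second} and~\eqref{third}). But in Proposition~\ref{up:skein1} both $\gamma_1$ and $\gamma_2$ are generalized arcs, so each smoothing produces a \emph{pair} of arcs, and the identity you need has two factors per term on the right. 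The correct ingredient is identity~\eqref{first}: for $m_1, m_2, m_3$ with $\det(m_1)=1$,
\begin{equation*}
\UR(m_2 m_1)\,\UR(m_1 m_3) \;=\; \UR(m_1)\,\UR(m_2 m_1 m_3) \;+\; \UR(m_2)\,\UR(m_3).
\end{equation*}
This is a three-matrix Pl\"ucker/Ptolemy-type identity, not a corollary of the trace identity. Correspondingly, the right decomposition is not ``each $\overline{M}(\rho_i)$ factors through a local piece at $x$,'' but rather that $\gamma_1$ and $\gamma_2$ \emph{share a whole sub-arc} once you reorient and smooth: $\gamma_1\sim\beta_1\circ\alpha_2$, $\gamma_2\sim\alpha_1\circ\beta_1$, $\beta_2\sim\alpha_1\circ\beta_1\circ\alpha_2$, and you take $m_1=\overline{M}(\tilde\rho_{\beta_1})$, $m_2=\overline{M}(\rho_{\alpha_1})$, $m_3=\overline{M}(\rho_{\alpha_2})$. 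Then $m_1 m_3$, $m_2 m_1$, and $m_2 m_1 m_3$ realize $\gamma_1$, $\gamma_2$, $\beta_2$ respectively, and the identity unpacks directly to the skein relation.

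Second, the bookkeeping that turns the $SL_2$ identity into the monomial $\prod_i\Y_i^{(c_i-a_i)/2}$ needs more than a parity argument. Concatenating the $M$-paths for $\alpha_1$, $\beta_1$, $\alpha_2$ does not immediately give \emph{standard} $M$-paths for $\gamma_1$, $\gamma_2$, $\beta_2$: the endpoints may land at different points on the little circles $h_m$, and correcting them costs lower-triangular conjugating factors. This is exactly what the paper's ``loosened $M$-paths'' (Definition~\ref{loosened}) and Lemma~\ref{skein-subs} are for — they record that correction as a monomial factor $\prod_i\Y_i^{-\tilde\ell/2}$. Moreover, to get from the signed-excess form of Corollary~\ref{cor:skein} to the clean statement with $e(\gamma,L_i)$, the paper specifically chooses the loosened $M$-paths to start and end at the distinguished point $V_m^+=v_{m,b_m}^-$ on each boundary horocycle $h_m\cap S$, which makes the signed intersection number $\ell(\tilde\rho_\gamma,\tau_i)$ coincide term-by-term with $e(\gamma,L_i)$ (equation~\eqref{up-identity}). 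Without making this choice, the intersection count against the lamination $L_i$ and the count of type-2 steps in the $M$-path will generally differ by the excess, and the asserted equality $c_i = $ ``total type-$2$ steps'' is not automatic. Your parity remark is correct as far as it goes and is stated (in the form ``$c_i-a_i$ and $c_i-b_i$ are even'') as a separate lemma in the paper, but it only shows the exponents are integers, not that they equal what the proposition claims.
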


\begin{prop}
\label{up:skein2}
Let $\gamma_1$ be a generalized arc
or loop and let $\gamma_2$ be a generalized loop, such that
$\gamma_1$ and $\gamma_2$ intersect each other; let $x$ be a point of intersection.
Let $\alpha$ and $\beta$ be the two oriented generalized arcs or loops
obtained by 
{\it smoothing} $\gamma_1$
and $\gamma_2$ at $x$, as in Figure \ref{skein3figure}.  
\begin{equation} \label{uskein-eq2}
\chi_{\gamma_1,T} ~\chi_{\gamma_2,T} =  \pm \chi_{\alpha,T} 
\prod_{i=1}^n \Y_i^{(c_i-a_i)/2}
\pm \chi_{\beta,T}\prod_{i=1}^n \Y_i^{(c_i-b_i)/2},
\end{equation}
where
$c_i=e(\gamma_1,L_i)+e(\gamma_2,L_i)$,
$a_i=e(\alpha, L_i)$, and
$b_i=e(\beta,L_i)$.
\end{prop}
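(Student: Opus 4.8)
\textbf{Proof proposal for Proposition \ref{up:skein2}.}

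The plan is to reduce this skein relation to the single matrix identity in Lemma \ref{matrix-identity} (the trace/upper-right identity promised at the start of Section \ref{sec:skein}), using the matrix formulas $\overline{\chi}_{\gamma,T}$ of Section \ref{sec matrix} rather than the $\hat\chi$ or $\chi$ versions, since the reduced matrices $\overline{M}(\rho)$ live in $SL_2(\R)$ and (by Remark \ref{ReducedMats}) transform by $\pm 1$ under isotopy with extra crossings. First I would choose $M$-paths $\rho_1$ and $\rho_2$ for $\gamma_1$ and $\gamma_2$ that pass through a common point $v_{m,\tau}^\pm$ sitting on a tiny horocycle near the crossing point $x$; concretely, isotope both curves so that near $x$ they each cross the same arc $\tau\in T$ transversally in a small disk, and arrange the starting/ending points of the $M$-paths at $v_{m,\tau}^\pm$. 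Then $\overline{M}(\rho_1)$ and $\overline{M}(\rho_2)$ each decompose as a product of two blocks $\overline{M}(\rho_1) = R_1 L_1$ and (for the loop $\gamma_2$) the trace $\overline{\chi}_{\gamma_2,T} = |\tr(L_2 R_2)|$, where $L_j,R_j$ are the reduced-matrix products of the two halves of each curve on either side of the common point.

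Next, the key observation is that the two smoothings $\alpha$ and $\beta$ are obtained by re-connecting the four half-paths: $\alpha$ and $\beta$ are (up to isotopy and up to extra intersections with $T$) the curves whose reduced matrices are, respectively, $R_1 L_2 R_2 L_1$ and $R_1 R_2^{-1}$-type products --- the precise pairing depending on the orientations drawn in Figure \ref{skein3figure}. The heart of the argument is then the algebraic identity
\begin{equation}
\tr\!\big(R_1 L_1\big)\cdot\tr\!\big(L_2 R_2\big) = \pm\,\UR\!\big(A\big) \pm \UR\!\big(B\big)
\end{equation}
(or the analogous all-trace version when $\gamma_1$ is also a loop), where $A$ and $B$ are the matrix products attached to the two smoothed configurations; this is exactly an instance of Lemma \ref{matrix-identity} applied to the $2\times 2$ blocks, and it follows from the elementary fact that for $M,N\in SL_2$ one has $\tr(M)\tr(N) = \tr(MN) + \tr(MN^{-1})$ together with $\tr(MN^{-1}) = \pm\,\UR(\cdots)$ after inserting the appropriate anti-diagonal ``turn'' matrices of type 3 at the common point. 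I would carry this out by first writing everything for $\gamma_1$ a generalized arc (so one factor is an $\UR$ and one is a $\tr$), then noting the loop-loop case is identical with both factors traces.

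Once the matrix identity is in hand, the remaining work is bookkeeping of the $\Y_i$-powers. Since $\overline{M}(\rho)$ differs from $M(\rho)$ by a factor of $\prod \Y_{\tau}^{1/2}$ over the arcs crossed --- and crucially, by Proposition \ref{FGXcoord}/Definition \ref{def:reduced_matrix}, the number of times an $M$-path crosses $\tau_i$ equals $e(\gamma,\tau_i)$, which in turn is related to $e(\gamma,L_i)$ --- passing from the $\overline{\chi}$ identity back to a $\chi_{\gamma,T}$ identity introduces exactly the monomials $\prod_i \Y_i^{(c_i-a_i)/2}$ and $\prod_i \Y_i^{(c_i-b_i)/2}$ claimed. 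I would verify the exponent arithmetic by observing that $c_i - a_i$ (resp. $c_i - b_i$) counts the net change in the number of crossings with the elementary lamination $L_i$ when one passes from the pair $\{\gamma_1,\gamma_2\}$ to the smoothing $\alpha$ (resp. $\beta$), each crossing contributing a $\Y_i^{1/2}$ from the square-root normalization; the halves are genuine integers because crossings come in pairs near $x$, and finally apply $\Phi$ to translate $\Y_i$ into the coefficient variables if $T$ has self-folded triangles.

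The main obstacle I expect is \textbf{not} the algebra but the topological case analysis: one must check that the chosen $M$-paths really can be taken to agree on a neighborhood of $x$ and that the four reassembled half-paths isotope (allowing extra intersections with $T$, which is harmless by Remark \ref{ReducedMats}) to genuine $M$-paths for $\alpha$ and $\beta$ --- and that this is compatible with all the possible local pictures (arc/loop, the two orientations in Figure \ref{skein3figure}, and $x$ lying in the interior of a triangle versus on an arc of $T$). I would handle this by reducing, via an isotopy, to the standard local model where both curves cross a single fixed arc $\tau$ transversally inside one triangle $\Delta$ near $x$, so that the ``turn'' at the common point is always the same type-3 elementary step, making the matrix blocks $L_j, R_j$ well-defined up to the $\pm$ ambiguity that the absolute values in Definition \ref{def:chi} absorb.
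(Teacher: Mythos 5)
Your proposal takes essentially the same route as the paper: pass to the reduced matrices $\overline{M}(\rho)$ in $SL_2(\mathbb{R})$, arrange the $M$-paths of $\gamma_1$ and $\gamma_2$ to share a common point $v_{m,\tau}^\pm$, split the arc's $M$-path there into two blocks, apply the trace/$\UR$ identity of Lemma~\ref{matrix-identity}, and then do $\Y$-exponent bookkeeping to come back to $\chi_{\gamma,T}$. Your factorization $\overline{M}(\rho_{\gamma_1}) = R_1 L_1$ and $\overline{\chi}_{\gamma_2,T} = |\tr(L_2 R_2)|$ corresponds directly to the paper's $m_3 m_2$ and $\tr(m_1)$, and $\overline{M}(\tilde\rho_{\alpha}) = \pm\, m_3 m_1 m_2$, $\overline{M}(\tilde\rho_{\beta}) = \pm\, m_3 m_1^{-1} m_2$ recovers your $R_1 L_2 R_2 L_1$ and $R_1(L_2 R_2)^{-1} L_1$.

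That said, there is one genuine gap. The outcome of the matrix identity on the $\overline\chi$ side naturally produces $\Y$-exponents given by the \emph{signed excess} $\tilde\ell(\tilde\rho_\cdot,\tau_i)$ of loosened $M$-paths, which a priori depend on your choice of $M$-paths near the endpoints; these are \emph{not} simply $e(\gamma,\tau_i)$. To arrive at the stated intrinsic exponents $(c_i - a_i)/2$ and $(c_i - b_i)/2$ — which are crossing numbers with the elementary laminations $L_i$, not with the arcs $\tau_i$ — you need the extra step the paper carries out in the unpunctured case: choose the loosened $M$-paths to start and end at the particular boundary points $V_m^+ = v_{m,b_m}^-$, and then verify the geometric identity $\ell(\tilde\rho_\gamma,\tau_i) = \tilde\ell(\tilde\rho_\gamma,\tau_i) + e(\gamma,\tau_i) = e(\gamma,L_i)$ by comparing the position of $V_m^+$ relative to $\tau_i$ with the position of $m$ relative to $L_i$. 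Your sketch asserts that the exponents ``are related to $e(\gamma,L_i)$'' without this mechanism (and cites Proposition~\ref{FGXcoord}, which is about Fock--Goncharov coordinates and does not supply it). Two further small imprecisions: your displayed identity mixes $\tr$ and $\UR$ inconsistently for the arc-times-loop case (the left factor for a generalized arc $\gamma_1$ should be $\UR(R_1 L_1)$, not $\tr(R_1 L_1)$), and the ``anti-diagonal turn matrices of type 3 inserted at the common point'' are not needed — the half-path matrices already compose directly.
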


\begin{prop}
\label{up:skein3}
Let $\gamma$ be a generalized arc or closed loop with a self-intersection at $x$.
Let $\alpha_1$, $\alpha_2$, and $\beta$ be the generalized arcs and loops
obtained by 
{\it smoothing} at $x$, as in Figure \ref{skein5figure}.
Then we have the identity
\begin{equation}\label{uskein-eq3}
\chi_{\gamma,T}  = 
\pm \chi_{\alpha_1,T} \chi_{\alpha_2,T} 
\prod_{i=1}^n \Y_i^{(c_i-a_i)/2}
\pm \chi_{\beta,T}\prod_{i=1}^n \Y_i^{(c_i-b_i)/2},
\end{equation}
where
$c_i=e(\gamma_1,L_i)+e(\gamma_2,L_i)$,
$a_i=e(\alpha_1,L_i)+e(\alpha_2,L_i)$, and
$b_i=e(\beta,L_i)$.
\end{prop}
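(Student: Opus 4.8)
\textbf{Proof proposal for Proposition \ref{up:skein3}.}
The plan is to reduce the skein relation for a self-intersecting curve to the matrix identities alluded to just before Definition \ref{def:smoothing}, which is the same strategy that Propositions \ref{up:skein1} and \ref{up:skein2} will use. First I would fix the self-intersection point $x$ of $\gamma$ and choose an $M$-path $\rho_\gamma$ for $\gamma$ that passes through a small circle $h_m$ (or a point of the form $v_{m,\tau}^\pm$) located near $x$; since $\gamma$ returns to a neighborhood of $x$ a second time, the matrix $M(\rho_\gamma)$ factors as a product $M_2 \cdot N \cdot M_1$, where $N$ is the piece of the $M$-path corresponding to the loop-shaped portion of $\gamma$ traversed between the two visits to $x$, and $M_1, M_2$ are the remaining two pieces. (Here I should use the \emph{reduced} matrices $\overline{M}$ so that everything lives in $SL_2(\R)$, invoking Remark \ref{ReducedMats} to control signs, and then translate back to $\chi$ via $\Phi$ at the end.) Because $\gamma$ is a single arc or loop, $\chi_{\gamma,T}$ is $|\UR|$ or $|\tr|$ of this product according to whether $\gamma$ is an arc or a closed loop.

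The two smoothings $\alpha_1 \cup \alpha_2$ (or $\beta$) are obtained by locally replacing the crossing $\times$ with $\cup\,\cap$ or $\supset\subset$. Near $x$ this local change is realized at the level of $M$-paths by inserting one of two specific constant $2\times 2$ matrices between $M_1$ and $N$, or by reconnecting the strands so that the matrix product re-splits. Concretely, $\supset\subset$ separates $\gamma$ into a curve whose $M$-path gives matrix $M_2 M_1$ (this is $\beta$, the smoothing that keeps one connected curve, and picks up the loop factor) while $\cup\,\cap$ separates it into two pieces, one giving $M_1$-type data and one giving the loop $N$ together with $M_2$ — these become $\alpha_1$ and $\alpha_2$. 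The key algebraic input will then be a matrix identity of the shape
\[
\overline{M}(\rho_\gamma) \;=\; \varepsilon_1\, \overline{M}(\rho_{\alpha_1})\,\overline{M}(\rho_{\alpha_2}) \;+\; \varepsilon_2\, \overline{M}(\rho_\beta)
\]
(with $\varepsilon_i = \pm 1$), after which taking $\UR$ or $\tr$ and absolute values yields \eqref{uskein-eq3}; this is precisely the content of the "matrix identities in Lemma \ref{matrix-identity}" that the section opener refers to, so I would cite that lemma here rather than reprove it.

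The remaining bookkeeping is the power-of-$\Y$ normalization. The matrices $\overline{M}$ differ from the combinatorial $M$ by a factor of $\Y_\tau^{\pm 1/2}$ for each crossing of $\tau \in T$, and when I split $\rho_\gamma$ into the pieces corresponding to $\alpha_1,\alpha_2$ (resp. $\beta$) the total number of crossings changes: $\gamma$ crosses $\tau_i$ a total of $c_i = e(\gamma_1,L_i)+e(\gamma_2,L_i)$ times (in the notation of the statement, $\gamma$ being thought of as the union $\gamma_1\cup\gamma_2$ of its two "halves" at $x$), while $\alpha_1\cup\alpha_2$ crosses it $a_i$ times and $\beta$ crosses it $b_i$ times, so converting from $\overline{M}$-normalization to $\chi$-normalization produces exactly the factors $\prod_i \Y_i^{(c_i-a_i)/2}$ and $\prod_i \Y_i^{(c_i-b_i)/2}$. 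A short lemma (or an appeal to the discussion in Section \ref{sign-chi} and Proposition \ref{FGXcoord}) identifying these crossing numbers with intersection numbers $e(\cdot,L_i)$ with the elementary laminations finishes this.

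The main obstacle I expect is not the algebra but the casework: one must check that for \emph{every} local picture of a self-intersection — in particular when $x$ lies inside or near a self-folded triangle, or when the loop-portion $N$ is contractible or has a contractible kink — the splitting of the $M$-path behaves as claimed and the specialization $\Phi$ does not introduce spurious denominators or extra constant terms (the same subtlety handled in Lemma \ref{F-polynomial}). Getting the signs $\varepsilon_1,\varepsilon_2$ and the absolute-value conventions consistent across all these cases, and making sure the standard $M$-path of Definition \ref{def:standard} can be chosen compatibly with the three resolved configurations simultaneously, is where the real work lies; the matrix identity itself is a one-line computation.
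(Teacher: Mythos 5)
Your strategy — factor the reduced matrix $\overline{M}(\rho_\gamma)$ around the self-intersection, invoke the identities of Lemma \ref{matrix-identity}, and then convert from $\overline{M}$-normalization back to $\chi$ — is exactly the approach the paper takes, so you have the right skeleton. Two corrections of detail: there is no matrix identity of the form $\overline{M}(\rho_\gamma)=\varepsilon_1\overline{M}(\rho_{\alpha_1})\overline{M}(\rho_{\alpha_2})+\varepsilon_2\overline{M}(\rho_\beta)$; what you actually have (from $m+m^{-1}=\tr(m)\,I$ for $m\in SL_2$) is identity \eqref{second} or \eqref{third}, which mixes $\UR$'s with one $\tr$ and therefore only becomes a scalar relation after you apply $\UR$ or $\tr$ — i.e. it is an identity in $\R$, not in $M_2(\R)$. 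Also, since this proposition is the \emph{unpunctured} case, the triangulation has no self-folded triangles automatically, so $\chi_{\gamma,T}=\hat{\chi}_{\gamma,T}$ and the specialization $\Phi$ plays no role here; your closing worry about $x$ lying near a self-folded triangle does not arise.

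The genuine gap is in what you call the "remaining bookkeeping." Converting from $\overline{M}$ to $\hat{\chi}$ picks up a factor $\sqrt{\Y_{\tau_i}}$ for each elementary step of type $2$ across $\tau_i$, i.e. an exponent governed by crossings with the \emph{arcs} $\tau_i$; but the exponents $c_i,a_i,b_i$ in \eqref{uskein-eq3} are intersection numbers with the \emph{elementary laminations} $L_i$, and for generalized arcs these two quantities differ near the marked points (where $L_i$ is shifted off $\tau_i$). Bridging this gap is precisely what the paper's loosened $M$-paths (Definition \ref{loosened}), signed excess (Definition \ref{signed-exc}), signed intersection numbers, and the base points $V_m^+$ are for: one proves Proposition \ref{prop:skein3} with exponents phrased via signed excess, converts to signed intersection numbers $\ell$ in Corollary \ref{cor:skein}, and then uses the geometric observation (Figure \ref{fig HoroBound}) that choosing loosened $M$-paths based at the $V_m^+$'s forces $\ell(\tilde{\rho}_\gamma,\tau_i)=e(\gamma,L_i)$. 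That last observation is exactly the content of the "short lemma" you defer; it is not short, and it is not in Section \ref{sign-chi} or Proposition \ref{FGXcoord} (neither of which discusses elementary laminations) but rather in the final proof at the end of Section \ref{sec:skein}, resting on the definition of $L_i$ in Definition \ref{elem-laminate} and the boundary geometry in Figure \ref{fig HoroBound}.
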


\subsection{Skein relations for surfaces with punctures}\label{withpunctures}
We now give the skein relations for surfaces with punctures.  
\begin{remark}
\label{rem:no-self-folded}
For technical reasons we assume in this section that the ideal triangulation $T$ has no self-folded triangles.  In particular, when there is a self-folded triangle, we cannot use the dictionary between signed intersection numbers and transverse measures that is described below in Remark \ref{FT-dict}. 
\end{remark}
Before stating the relations, we need to define some terminology.
The following definition is a slight variant of the $M$-paths defined in Section \ref{sec matrix}. 

\begin{definition}\label{loosened}
A \emph{loosened $M$-path} 
$\tilde{\rho}_\gamma$ 
for the (oriented) generalized arc $\gamma$ from the marked point $m$ to 
$m'$,
is a concatenation of oriented curves
$\sigma_2\circ \rho_{\gamma} \circ \sigma_1$ 
such that:
\begin{itemize}
\item  $\rho_\gamma$ is an 
$M$-path for $\gamma$
\item $\sigma_1$ (respectively, $\sigma_2$)
is a concatenation of elementary steps of types 1 and 2 
traveling along $h_{m}$ (resp., $h_{m'}$).
\end{itemize}
\end{definition}

\begin{definition} \label{signed-exc}
Let $\tilde{\rho}_\gamma = 
\sigma_2\circ \rho_{\gamma} \circ \sigma_1$ be a loosened $M$-path.  
We define the \emph{signed excess} of $\tilde{\rho}_\gamma$ with respect to the
arc $\tau_i \in T$ as
\begin{eqnarray*}\tilde{\ell}(\tilde{\rho}_\gamma, \tau_i) &=& \# \{\mathrm{intersections~in~} \tau_i \cap \sigma_1 ~~\mathrm{~if~}\sigma_1 \mathrm{~travels ~
counterclockwise~ along~ } h_m\}\\
&+& \# \{\mathrm{intersections~in~} \tau_i \cap \sigma_2 ~~\mathrm{~if~}\sigma_2 \mathrm{~travels~
clockwise~ along~ }h_{m'}\} \\
&-& \# \{\mathrm{intersections~in~} \tau_i \cap \sigma_1 ~~\mathrm{~if~}\sigma_1 \mathrm{~travels~
clockwise~ along~ }h_m\} \\
&-& \# \{\mathrm{intersections~in~} \tau_i \cap \sigma_2 ~~\mathrm{~if~}\sigma_2 \mathrm{~travels~
counterclockwise~ along~ }h_{m'}\}.
\end{eqnarray*} 
We also make the convention that if $\alpha$ is a closed loop, then 
$\tilde{\ell}(\tilde{\rho}_{\alpha}, \tau_i)=0$.
\end{definition}

\begin{figure}
\input{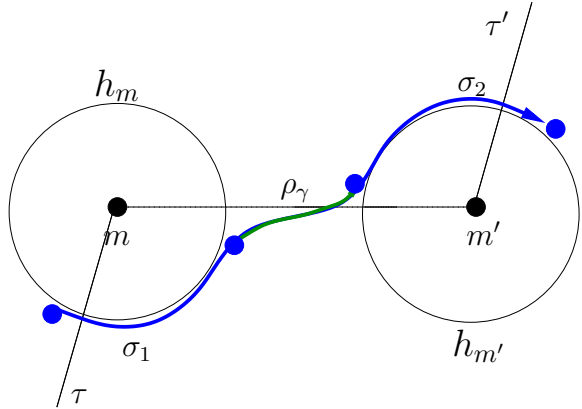}
\caption{A \emph{loosened $M$-path} with a \emph{positive} signed excess with respect to both $\tau$ and $\tau'$}\label{fig:sgn-excess}
\end{figure}

\begin{lemma}\label{skein-subs}
We use the notations of Definitions \ref{loosened} and \ref{signed-exc}.
Then we have that 
\begin{equation*} 
|\UR\left(\overline{M}(\tilde{\rho}_{\gamma})\right)|  = 
\overline{\chi}_{\gamma,T} 
\prod_{i=1}^n \Y_i^{\frac{-\tilde{\ell}(\tilde{\rho}_{\gamma}, \tau_i)}{2}}.
\end{equation*}
\end{lemma}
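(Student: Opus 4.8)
\textbf{Proof plan for Lemma \ref{skein-subs}.}
The plan is to reduce the statement to the already-established formula $\overline{\chi}_{\gamma,T} = |\UR(\overline{M}(\rho_\gamma))|$ (Definition \ref{def:chi} together with Lemma \ref{Lem MA}) by analyzing precisely how prepending $\sigma_1$ and appending $\sigma_2$ modify the matrix product $\overline{M}(\rho_\gamma)$, and how each such modification alters the upper-right entry. The key point is that $\sigma_1$ and $\sigma_2$ are concatenations of elementary steps of types $1$ and $2$ only, so each of their constituent matrices is either lower-triangular with $1$'s on the diagonal (type $1$) or diagonal of the form $\mathrm{diag}(\Y_\tau^{\pm 1/2}, \Y_\tau^{\mp 1/2})$ (type $2$, in the reduced normalization). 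I will track how $\UR$ of a $2\times 2$ matrix transforms under left- and right-multiplication by these two families.

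First I would record the elementary computations: for a matrix $M=\left[\begin{smallmatrix} a & b \\ c & d\end{smallmatrix}\right]$, right-multiplying by $N=\left[\begin{smallmatrix} 1 & 0 \\ t & 1\end{smallmatrix}\right]$ sends $\UR(M)=b$ to $\UR(MN)=b$, i.e.\ it is unchanged; left-multiplying by such an $N$ sends $\UR(NM)=\UR(M)$ as well (since $\UR(NM) = b$). For the reduced type-$2$ matrices $D^{\pm}_\tau = \mathrm{diag}(\Y_\tau^{\mp 1/2},\Y_\tau^{\pm 1/2})$, right-multiplication by $D^{+}_\tau$ (clockwise step, as in Figure \ref{fig step34}) scales $\UR(M)$ by $\Y_\tau^{1/2}$, while right-multiplication by $D^{-}_\tau$ (counterclockwise) scales $\UR(M)$ by $\Y_\tau^{-1/2}$; and left-multiplication by $D^{+}_\tau$ scales $\UR(M)$ by $\Y_\tau^{-1/2}$, while $D^{-}_\tau$ scales it by $\Y_\tau^{1/2}$. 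Thus, along $\sigma_1$ (which is applied \emph{first}, hence sits on the right of the matrix product $\overline{M}(\tilde\rho_\gamma) = \overline{M}(\sigma_2)\,\overline{M}(\rho_\gamma)\,\overline{M}(\sigma_1)$), every counterclockwise type-$2$ step crossing $\tau_i$ contributes a factor $\Y_i^{-1/2}$ to $\UR$ and every clockwise one contributes $\Y_i^{1/2}$; along $\sigma_2$ (on the left), a clockwise step crossing $\tau_i$ contributes $\Y_i^{-1/2}$ and a counterclockwise one contributes $\Y_i^{1/2}$. Type-$1$ steps in $\sigma_1,\sigma_2$ contribute nothing.

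Next I would observe that the number of intersections of $\sigma_1$ (resp.\ $\sigma_2$) with $\tau_i$ equals the number of type-$2$ elementary steps in $\sigma_1$ (resp.\ $\sigma_2$) that cross $\tau_i$, since type-$1$ steps travel around $h_m$ without crossing any arc. Matching this count against Definition \ref{signed-exc}: the four terms of $\tilde\ell(\tilde\rho_\gamma,\tau_i)$ are, with signs, exactly (counterclockwise crossings of $\sigma_1$) $+$ (clockwise crossings of $\sigma_2$) $-$ (clockwise crossings of $\sigma_1$) $-$ (counterclockwise crossings of $\sigma_2$). Comparing with the exponent bookkeeping from the previous paragraph, the total power of $\Y_i$ by which $\UR(\overline{M}(\tilde\rho_\gamma))$ differs from $\UR(\overline{M}(\rho_\gamma))$ is exactly $-\tfrac12\tilde\ell(\tilde\rho_\gamma,\tau_i)$ for each $i$ (the counterclockwise-$\sigma_1$ factor $\Y_i^{-1/2}$ and counterclockwise-$\sigma_2$ factor $\Y_i^{1/2}$ have the right signs, etc.). Taking absolute values and using $|\UR(\overline{M}(\rho_\gamma))| = \overline{\chi}_{\gamma,T}$ (which is a monomial rescaling identity valid up to sign by Lemma \ref{Lem MA} and Remark \ref{ReducedMats}) yields the claimed identity. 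The convention $\tilde\ell(\tilde\rho_\alpha,\tau_i)=0$ for closed loops is consistent since then there are no $\sigma_1,\sigma_2$ segments, so the statement is vacuous in that case.

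The main obstacle I anticipate is purely one of sign/orientation bookkeeping: making sure the clockwise-versus-counterclockwise conventions in Figures \ref{fig step34} and \ref{fig:sgn-excess} are threaded consistently through the left- versus right-multiplication asymmetry (since $\sigma_1$ occurs at the start of the path but at the right of the matrix product, while $\sigma_2$ occurs at the end but at the left). I would handle this by fixing once and for all the correspondence ``clockwise step of type $2$ crossing $\tau \leftrightarrow$ matrix $\mathrm{diag}(\Y_\tau^{-1/2},\Y_\tau^{1/2})$'' from Definition \ref{def:reduced_matrix}, and then verifying the four cases of Definition \ref{signed-exc} one at a time against the four combinations (left/right multiplication) $\times$ (clockwise/counterclockwise). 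Everything else is routine $2\times 2$ matrix arithmetic.
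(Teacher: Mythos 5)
Your proposal is correct and follows essentially the same route as the paper: both arguments rest on the observation that $\overline{M}(\sigma_1)$ and $\overline{M}(\sigma_2)$ are lower-triangular with determinant $1$, so conjugating $\overline{M}(\rho_\gamma)$ by them rescales the upper-right entry by the ratio of the relevant diagonal entries, which is exactly $\prod_i \Y_i^{-\tilde\ell(\tilde\rho_\gamma,\tau_i)/2}$. The only (harmless) difference is that you track the contribution of each elementary step of $\sigma_1,\sigma_2$ individually, whereas the paper records $\overline{M}(\sigma_1)$ and $\overline{M}(\sigma_2)$ at once as lower-triangular matrices with diagonals $\bigl(\prod_i\Y_i^{a_i/2},\prod_i\Y_i^{-a_i/2}\bigr)$ and $\bigl(\prod_i\Y_i^{b_i/2},\prod_i\Y_i^{-b_i/2}\bigr)$ and reads off the answer directly.
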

\begin{proof}
By Definition \ref{loosened}, 
the matrix 
$\overline{M}(\tilde{\rho}_{\gamma})$ associated to
a loosened $M$-path $\tilde{\rho}_{\gamma} = 
\sigma_2\circ \rho_\gamma \circ \sigma_1$
can be decomposed as a product 
$\overline{M}(\tilde{\rho}_{\gamma}) = \overline{M}(\sigma_2) \overline{M}(\rho_{\gamma}) \overline{M}(\sigma_1)$, where 
$\sigma_1$ and $\sigma_2$ consist only of steps of types 1 and 2.  
Therefore $\overline{M}(\sigma_1)$ and $\overline{M}(\sigma_2)$ are lower-triangular matrices in 
$SL_2(\R)$, so we obtain
\begin{eqnarray*} \left|\UR\left(\overline{M}(\tilde{\rho}_{\gamma})\right)\right| &=& \left|\UR\left(
\left[ \begin{matrix} \prod_{i=1}^n \Y_i^{b_i/2} & 0 \\ * & \prod_{i=1}^n \Y_i^{-b_i/2} \end{matrix}\right]
\overline{M}(\rho_\gamma) 
\left[ \begin{matrix} \prod_{i=1}^n \Y_i^{a_i/2} & 0 \\ * & \prod_{i=1}^n \Y_i^{-a_i/2} \end{matrix}\right]
\right)\right| \\  &=&  |\UR\left(\overline{M}(\rho_{\gamma})\right)| \prod_{i=1}^n \Y_i^{b_i/2-a_i/2} = \overline{\chi}_{\gamma,T} 
\prod_{i=1}^n \Y_i^{\frac{-\tilde{\ell}(\tilde{\rho}_{\gamma}, \tau_i)}{2}}.
\end{eqnarray*}
Here, the last equality follows from Definition \ref{signed-exc}. 
\end{proof}

For convenience, we also record the
following identities involving
$2 \times 2$ matrices.
If $m$ is a $2 \times 2$ matrix, let 
$\UR(m)$ denote its upper right entry, and $\tr(m)$ denote its trace.
These identities will be 
used to prove skein relations involving reduced matrices associated to 
generalized arcs and loops (as in Definition \ref{def:reduced_matrix}).

\begin{lemma}\label{matrix-identity}
Let $m_1, m_2, m_3$ be $2 \times 2$ matrices where $\det(m_1)=1$.
Then 
\begin{align}
\label{first}
\UR(m_2 m_1) \UR(m_1 m_3) &= \UR(m_1) \UR(m_2 m_1 m_3) + 
            \UR(m_2) \UR(m_3), \\
\label{second} \UR(m_3 m_2) \tr(m_1) &= \UR(m_3 m_1 m_2) + 
\UR(m_3 m_1^{-1} m_2), \text{ and }\\
\label{third} \tr(m_2) \tr(m_1) &= \tr (m_1 m_2) + \tr(m_1^{-1} m_2).
\end{align}
\end{lemma}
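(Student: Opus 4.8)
The plan is to verify each of the three identities in Lemma \ref{matrix-identity} by direct computation with $2\times 2$ matrices, exploiting the constraint $\det(m_1)=1$ throughout. Since the entries of $m_2$ and $m_3$ are arbitrary, and only $m_1$ is required to have determinant $1$, I would parametrize
$$m_1 = \begin{pmatrix} a & b \\ c & d \end{pmatrix}, \qquad ad-bc=1,$$
and write $m_2$, $m_3$ with generic entries. The guiding observation is that the right-hand side of each identity is designed so that the cross-terms coming from the $2\times 2$ matrix multiplications cancel, leaving exactly the product on the left; this is a polynomial identity in the entries, and once the determinant condition $ad-bc=1$ is substituted at the right moment, both sides collapse to the same expression.

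First I would prove \eqref{first}. Expanding $\UR(m_2 m_1)$ and $\UR(m_1 m_3)$ gives bilinear expressions in the entries of $m_2$ (resp.\ $m_3$) and $m_1$. Expanding $\UR(m_2 m_1 m_3)$ produces a trilinear expression; multiplying by $\UR(m_1)=b$ and adding $\UR(m_2)\UR(m_3)$, I expect the terms involving $ad$ and $bc$ to combine via $ad=1+bc$ precisely so that the total equals $\UR(m_2 m_1)\UR(m_1 m_3)$. The computation is short: each side is a sum of at most a handful of monomials, and the only nontrivial input is the single substitution $ad-bc=1$.

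For \eqref{second}, the key structural fact is that for a $2\times 2$ matrix $m_1$ with $\det(m_1)=1$ one has, by Cayley--Hamilton, $m_1 + m_1^{-1} = \tr(m_1)\, I$. Hence
$$\UR(m_3 m_1 m_2) + \UR(m_3 m_1^{-1} m_2) = \UR\bigl(m_3 (m_1+m_1^{-1}) m_2\bigr) = \tr(m_1)\,\UR(m_3 m_2),$$
which is exactly \eqref{second}. Identity \eqref{third} follows the same way, using linearity of trace: $\tr(m_1 m_2) + \tr(m_1^{-1} m_2) = \tr\bigl((m_1+m_1^{-1})m_2\bigr) = \tr(m_1)\tr(m_2)$. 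So \eqref{second} and \eqref{third} are essentially immediate once Cayley--Hamilton is invoked, and require no hypothesis on $m_2, m_3$ beyond being $2\times 2$.

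The only step with any friction is \eqref{first}, since it does not reduce to a clean operator identity like $m_1 + m_1^{-1} = \tr(m_1) I$; it genuinely requires matching monomials after the determinant substitution. Even so, this is a routine finite check rather than a real obstacle. I would present \eqref{first} via the explicit entrywise expansion (organizing the bookkeeping by which entry of $m_2$ and which entry of $m_3$ a given monomial uses), and present \eqref{second} and \eqref{third} via the Cayley--Hamilton reduction, noting that \eqref{second} is the ``upper-right'' analogue of the trace identity \eqref{third}.
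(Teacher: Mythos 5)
Your proof is correct. The paper offers no argument beyond the single sentence that the identities ``can be easily checked by hand or computer,'' so for \eqref{first} your direct expansion is exactly what is implicitly intended; the one cancellation you anticipate is indeed the one that occurs: writing $a,b,c,d$ for the entries of $m_1$ and $q=\UR(m_2)$, $v=\UR(m_3)$, the two sides of \eqref{first} differ by $(ad-bc-1)\,qv$, which vanishes by hypothesis. Where you genuinely improve on the unstated brute-force check is in \eqref{second} and \eqref{third}: the Cayley--Hamilton consequence $m_1 + m_1^{-1} = \tr(m_1)\,I$ (valid precisely when $\det(m_1)=1$) reduces both identities to linearity of $\UR$ and $\tr$, and it makes transparent why the determinant hypothesis enters there. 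It is worth noting for completeness that \eqref{first} also has a clean unconditional form, $\UR(m_2 m_1)\,\UR(m_1 m_3) = \UR(m_1)\,\UR(m_2 m_1 m_3) + \det(m_1)\,\UR(m_2)\,\UR(m_3)$, of which the stated version is the $\det(m_1)=1$ specialization; this drops out of the same entrywise computation you describe.
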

\begin{proof}
These identities can be easily checked by hand or  computer.
\end{proof}

In what follows, the notation $\sim$ denotes isotopy of curves.

\begin{prop} \label{prop:skein1}
Let $\gamma_1$ and $\gamma_2$ be two generalized arcs
which intersect each other at least once; let $x$ be a point of intersection;
and let $\alpha_1$, $\alpha_2$
and $\beta_1$, $\beta_2$ be the two pairs of arcs obtained by 
smoothing $\gamma_1$
and $\gamma_2$ at $x$, as in Figure \ref{fig:orient}.
In particular, we choose orientations and label these four arcs so that:
$$ \gamma_1  \sim 
\beta_1 \circ \alpha_2, ~~\gamma_2 \sim \alpha_1 \circ \beta_1, \text{ and } \beta_2 \sim \alpha_1 \circ \beta_1 \circ 
\alpha_2.$$

We then construct standard $M$-paths (Definition \ref{def:standard}) 
$\rho_{\alpha_1}$ from $s_1$ to $t_1$ and $\rho_{\alpha_2}$ from 
$s_2$ to $t_2$.  We let $\tilde{\rho}_{\beta_1}$ be a loosened $M$-path from $t_2$ to $s_1$ that extends a standard $M$-path corresponding to $\beta_1$.  Finally,
we choose loosened $M$-paths 
$\tilde{\rho}_{\gamma_1},~~
\tilde{\rho}_{\gamma_2},$ and 
$\tilde{\rho}_{\beta_2}$ so that 
\begin{equation} \label{skein1-isotopy}
\tilde{\rho}_{\gamma_1} \sim \tilde{\rho}_{\beta_1}\circ \rho_{\alpha_2},~~
\tilde{\rho}_{\gamma_2} \sim \rho_{\alpha_1}\circ \tilde{\rho}_{\beta_1},~~\mathrm{~ and~}~~ 
\tilde{\rho}_{\beta_2} \sim \rho_{\alpha_1}\circ \tilde{\rho}_{\beta_1}\circ \rho_{\alpha_2}.
\end{equation}
We then obtain the following identity:
\begin{equation} \label{skein-eq1}
\overline{\chi}_{\gamma_1,T}~ \overline{\chi}_{\gamma_2,T} = \pm \overline{\chi}_{\alpha_1,T} ~\overline{\chi}_{\alpha_2,T}
\prod_{i=1}^n \Y_i^{\frac{\tilde{\ell}(\tilde{\rho}_{\gamma_1}, \tau_i)
+ \tilde{\ell}(\tilde{\rho}_{\gamma_2}, \tau_i)}{2}}
\pm \overline{\chi}_{\beta_1,T} ~\overline{\chi}_{\beta_2,T} \prod_{i=1}^n \Y_i^{\frac{\tilde{\ell}(\tilde{\rho}_{\gamma_1}, \tau_i)
+ \tilde{\ell}(\tilde{\rho}_{\gamma_2}, \tau_i) - \tilde{\ell}(\tilde{\rho}_{\beta_1}, \tau_i)
- \tilde{\ell}(\tilde{\rho}_{\beta_2}, \tau_i)}{2}}.
\end{equation}

\end{prop}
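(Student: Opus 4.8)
The plan is to reduce the skein relation \eqref{skein-eq1} to the matrix identity \eqref{first} of Lemma \ref{matrix-identity}, using the reduced matrices $\overline{M}$ associated to the loosened $M$-paths and the substitution formula of Lemma \ref{skein-subs}. First I would set up the matrices: let $m_2 = \overline{M}(\rho_{\alpha_1})$, $m_1 = \overline{M}(\tilde\rho_{\beta_1})$, and $m_3 = \overline{M}(\rho_{\alpha_2})$, where $\rho_{\alpha_1},\rho_{\alpha_2}$ are standard $M$-paths and $\tilde\rho_{\beta_1}$ is the chosen loosened $M$-path from $t_2$ to $s_1$. The key point is that $m_1 \in SL_2(\R)$ (all reduced matrices lie in $SL_2$ by Definition \ref{def:reduced_matrix}), so $\det(m_1)=1$ and \eqref{first} applies. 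By the concatenation rule in Definition \ref{matrixforarc} and the isotopies \eqref{skein1-isotopy}, together with Remark \ref{ReducedMats} (which says that isotopic $M$-paths with possibly extra intersections give reduced matrices agreeing up to sign), we have $\overline{M}(\tilde\rho_{\gamma_1}) = \pm m_1 m_3$, $\overline{M}(\tilde\rho_{\gamma_2}) = \pm m_2 m_1$, and $\overline{M}(\tilde\rho_{\beta_2}) = \pm m_2 m_1 m_3$, while $\overline{M}(\rho_{\alpha_1}) = m_2$ and $\overline{M}(\rho_{\alpha_2}) = m_3$ on the nose.

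Next I would apply $\UR$ and take absolute values. Identity \eqref{first} gives
\[
\UR(m_2 m_1)\,\UR(m_1 m_3) = \UR(m_1)\,\UR(m_2 m_1 m_3) + \UR(m_2)\,\UR(m_3),
\]
so after taking absolute values and allowing signs on each term,
\[
|\UR(m_2 m_1)|\,|\UR(m_1 m_3)| = \pm |\UR(m_1)|\,|\UR(m_2 m_1 m_3)| \pm |\UR(m_2)|\,|\UR(m_3)|.
\]
Now I translate each factor back into a $\overline{\chi}$ times a monomial in the $\Y_i$ via Lemma \ref{skein-subs}: $|\UR(\overline{M}(\tilde\rho_{\gamma}))| = \overline{\chi}_{\gamma,T}\prod_i \Y_i^{-\tilde\ell(\tilde\rho_\gamma,\tau_i)/2}$, while for a standard $M$-path the signed excess vanishes, so $|\UR(m_2)| = \overline{\chi}_{\alpha_1,T}$ and $|\UR(m_3)| = \overline{\chi}_{\alpha_2,T}$, and $|\UR(m_1)| = \overline{\chi}_{\beta_1,T}\prod_i \Y_i^{-\tilde\ell(\tilde\rho_{\beta_1},\tau_i)/2}$. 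Substituting these in, the term $|\UR(m_2 m_1)|\,|\UR(m_1 m_3)|$ becomes $\overline{\chi}_{\gamma_2,T}\,\overline{\chi}_{\gamma_1,T}\prod_i \Y_i^{-(\tilde\ell(\tilde\rho_{\gamma_1},\tau_i)+\tilde\ell(\tilde\rho_{\gamma_2},\tau_i))/2}$; clearing this denominator onto the right-hand side is exactly what produces the exponent $\tfrac{\tilde\ell(\tilde\rho_{\gamma_1},\tau_i)+\tilde\ell(\tilde\rho_{\gamma_2},\tau_i)}{2}$ multiplying the first term, and the analogous bookkeeping for $|\UR(m_1)|\,|\UR(m_2 m_1 m_3)|$ yields the more elaborate exponent on the second term. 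The $\overline{\chi}_{\alpha_i,T}$ carry no monomial factors, matching the statement.

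The main obstacle I expect is \emph{not} the algebra but the geometric bookkeeping: one must check carefully that the chosen orientations and the four labeled arcs really do satisfy the stated isotopies $\gamma_1 \sim \beta_1\circ\alpha_2$, $\gamma_2 \sim \alpha_1\circ\beta_1$, $\beta_2 \sim \alpha_1\circ\beta_1\circ\alpha_2$, and that the loosened $M$-paths $\tilde\rho_{\gamma_1}, \tilde\rho_{\gamma_2}, \tilde\rho_{\beta_2}$ can be chosen compatibly with \eqref{skein1-isotopy} so that the concatenation-of-matrices rule composes exactly as $m_1 m_3$, $m_2 m_1$, $m_2 m_1 m_3$ without any stray elementary step between the blocks. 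Here one uses that smoothing at $x$ is a purely local operation: away from a small disk around $x$ the curves $\gamma_1,\gamma_2$ and $\alpha_1,\alpha_2,\beta_1,\beta_2$ literally coincide, so their $M$-paths can be taken to agree segment-by-segment outside that disk, and inside the disk the two smoothings $C_+$ and $C_-$ correspond to the two ways of concatenating the matrix blocks. The secondary subtlety is tracking the signs: since everything is only well-defined up to $\pm$ in $PSL_2$, the $\pm$ signs in \eqref{skein-eq1} are genuinely ambiguous and need no further justification, but one should remark that they can be pinned down in each concrete case by comparing leading terms (e.g.\ via Lemma \ref{standard-sign}, which guarantees positivity of coefficients when standard $M$-paths are used). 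Once these geometric identifications are in place, the proof is just the substitution described above.
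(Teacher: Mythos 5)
Your proof is correct and takes essentially the same route as the paper: the same choice of $m_1=\overline{M}(\tilde\rho_{\beta_1})$, $m_2=\overline{M}(\rho_{\alpha_1})$, $m_3=\overline{M}(\rho_{\alpha_2})$, the same appeal to Remark \ref{ReducedMats} with \eqref{skein1-isotopy} to get \eqref{matrix-partial}, followed by the matrix identity \eqref{first} and Lemma \ref{skein-subs}. Your write-up is slightly more explicit than the paper's in spelling out that the standard $M$-paths $\rho_{\alpha_1},\rho_{\alpha_2}$ carry zero signed excess, but the argument is the same.
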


\begin{prop}
\label{prop:skein2}
Let $\gamma_1$ be a generalized arc
or loop and let $\gamma_2$ be a generalized loop, such that
$\gamma_1$ and $\gamma_2$ intersect each other; let $x$ be a point of intersection.
Let $\alpha$ and $\beta$ be the two oriented generalized arcs or loops
obtained by 
{\it smoothing} $\gamma_1$
and $\gamma_2$ at $x$, as in Figure \ref{skein3figure}.  
We construct standard $M$-paths $\rho_{\gamma_1}$ and $\rho_{\gamma_2}$, respectively, so that 
they intersect at some $v_{m,\tau}^\pm$.  We let $\rho_{\gamma_1}^{(1)}$ be the first portion of the $M$-path $\rho_{\gamma_1}$ (up until $v_{m,\tau}^{\pm}$), and $\rho_{\gamma_1}^{(2)}$ be the second portion;
if $\gamma_1$ is a loop, then we let $\rho_{\gamma_1}^{(2)}$ denote the empty path.  We then choose
loosened $M$-paths (or just $M$-paths when $\gamma_1$ is a closed loop) 
$\tilde{\rho}_{\alpha}$ and $\tilde{\rho}_{\beta}$
for $\alpha$ and $\beta$ such that
$\tilde{\rho}_{\alpha} \sim \rho_{\gamma_1}^{(2)} \circ \rho_{\gamma_2} \circ \rho_{\gamma_1}^{(1)}$, 
and 
$\tilde{\rho}_{\beta} \sim \rho_{\gamma_1}^{(2)} \circ (\rho_{\gamma_2})^{-1} \circ \rho_{\gamma_1}^{(1)}$.
We have the identity
\begin{equation} \label{skein-eq2}
\overline{\chi}_{\gamma_1,T} ~\overline{\chi}_{\gamma_2,T} =  \pm \overline{\chi}_{\alpha,T} 
\prod_{i=1}^n \Y_i^{\frac{ - \tilde{\ell}(\tilde{\rho}_{\alpha}, \tau_i)}{2}}
\pm \overline{\chi}_{\beta,T}\prod_{i=1}^n \Y_i^{ \frac{ -\tilde{\ell}(\tilde{\rho}_{\beta}, \tau_i)}{2}}.
\end{equation}
\end{prop}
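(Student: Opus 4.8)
\textbf{Proof proposal for Proposition \ref{prop:skein2}.}

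The plan is to reduce the skein relation to the matrix identities \eqref{second} and \eqref{third} from Lemma \ref{matrix-identity}, exactly as in the unpunctured case but keeping careful track of the $\Y_i$-monomials via Lemma \ref{skein-subs}. First I would set up the three matrices to which the identity will be applied. Let $\rho_{\gamma_1}^{(1)}$ and $\rho_{\gamma_1}^{(2)}$ be the two portions of the standard $M$-path $\rho_{\gamma_1}$ meeting at $v_{m,\tau}^{\pm}$ (with $\rho_{\gamma_1}^{(2)}$ empty when $\gamma_1$ is a loop), and write $m_3 = \overline{M}(\rho_{\gamma_1}^{(2)})$, $m_2 = \overline{M}(\rho_{\gamma_1}^{(1)})$, and $m_1 = \overline{M}(\rho_{\gamma_2})$. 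Since $\rho_{\gamma_2}$ is a (loosened) $M$-path for a closed loop, $m_1 \in SL_2(\R)$, so $\det(m_1)=1$ and the hypothesis of Lemma \ref{matrix-identity} is satisfied. By Remark \ref{ReducedMats} and the concatenation convention of Definition \ref{matrixforarc}, the matrices of the chosen loosened $M$-paths satisfy $\overline{M}(\tilde{\rho}_{\gamma_1}) = \pm\, m_3 m_2$ when $\gamma_1$ is an arc (or $\pm\, m_2$, i.e. $m_3 = I$, when it is a loop), $\overline{M}(\tilde{\rho}_{\gamma_2}) = \pm\, m_1$, $\overline{M}(\tilde{\rho}_{\alpha}) = \pm\, m_3 m_1 m_2$, and $\overline{M}(\tilde{\rho}_{\beta}) = \pm\, m_3 m_1^{-1} m_2$; here I use that $(\rho_{\gamma_2})^{-1}$ reverses the list of elementary steps and hence contributes the inverse matrix (since each type-1, type-2, and type-3 step matrix is invertible and reversing orientation inverts it up to sign, which does not matter in $PSL_2$).

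Next I would apply the appropriate identity. If $\gamma_1$ is a generalized arc, then $\overline{\chi}_{\gamma_1,T}\cdot\overline{\chi}_{\gamma_2,T}$ is built from $|\UR(m_3 m_2)|$ and $|\tr(m_1)|$, so identity \eqref{second},
\[
\UR(m_3 m_2)\,\tr(m_1) = \UR(m_3 m_1 m_2) + \UR(m_3 m_1^{-1} m_2),
\]
applies; if $\gamma_1$ is a closed loop, then $m_3 = I$ and instead identity \eqref{third},
\[
\tr(m_2)\,\tr(m_1) = \tr(m_1 m_2) + \tr(m_1^{-1} m_2),
\]
applies (noting $\tr(m_1 m_2)=\tr(m_2 m_1)$, which absorbs the circular ambiguity in choosing the base point for the loop $\alpha$). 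In either case, taking absolute values of both sides and invoking Lemma \ref{Lem MA} (so the left side really is $\hat{\chi}$ of the two standard $M$-paths, up to sign) turns the right side into $\pm$(a matrix entry of $\overline{M}(\tilde{\rho}_{\alpha})$) $\pm$ (a matrix entry of $\overline{M}(\tilde{\rho}_{\beta})$). The only subtlety is that the two terms on the right come with possibly opposite signs; but since both $\tilde{\rho}_{\alpha}$ and $\tilde{\rho}_{\beta}$ can be deformed to standard $M$-paths, and we are only asserting an identity up to the $\pm$'s in \eqref{skein-eq2}, we do not need to pin these signs down — this is exactly the role of Lemma \ref{standard-sign} and the discussion of non-standard $M$-paths at the end of Section \ref{sign-chi}.

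Finally I would convert the loosened $M$-paths back to the genuine invariants $\overline{\chi}$. By Lemma \ref{skein-subs},
\[
\bigl|\UR\bigl(\overline{M}(\tilde{\rho}_{\alpha})\bigr)\bigr| = \overline{\chi}_{\alpha,T}\prod_{i=1}^n \Y_i^{-\tilde{\ell}(\tilde{\rho}_{\alpha},\tau_i)/2},
\]
and similarly for $\tilde{\rho}_{\beta}$, $\tilde{\rho}_{\gamma_1}$, $\tilde{\rho}_{\gamma_2}$; since $\rho_{\gamma_1}$ and $\rho_{\gamma_2}$ were taken to be \emph{standard} $M$-paths their signed excesses vanish, so the left side is literally $\overline{\chi}_{\gamma_1,T}\,\overline{\chi}_{\gamma_2,T}$. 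Substituting these into the matrix identity and clearing the $\Y$-monomials yields \eqref{skein-eq2}. The main obstacle I anticipate is purely bookkeeping rather than conceptual: one must verify that the isotopies $\tilde{\rho}_{\alpha}\sim \rho_{\gamma_1}^{(2)}\circ\rho_{\gamma_2}\circ\rho_{\gamma_1}^{(1)}$ and $\tilde{\rho}_{\beta}\sim \rho_{\gamma_1}^{(2)}\circ(\rho_{\gamma_2})^{-1}\circ\rho_{\gamma_1}^{(1)}$ really do correspond, at the level of curves on $(S,M)$, to the topological smoothings $\alpha$ and $\beta$ depicted in Figure \ref{skein3figure} — in particular that concatenating $\rho_{\gamma_2}$ (resp. its reverse) at the intersection point $v_{m,\tau}^{\pm}$ produces the $\cup_\cap$ resolution (resp. the $\supset\subset$ resolution), and that the extra little arcs $\sigma_1,\sigma_2$ needed to make these concatenations into honest $M$-paths are accounted for by the signed-excess exponents. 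This is where the restriction to triangulations without self-folded triangles (Remark \ref{rem:no-self-folded}) is used, so that the dictionary between $M$-paths and curves, and between signed excess and the $\Y_i$-exponents, is unambiguous.
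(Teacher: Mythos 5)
Your proposal matches the paper's proof: you define the same matrices $m_1 = \overline{M}(\rho_{\gamma_2})$, $m_2 = \overline{M}(\rho_{\gamma_1}^{(1)})$, $m_3 = \overline{M}(\rho_{\gamma_1}^{(2)})$, split into the same two cases (applying identity \eqref{second} when $\gamma_1$ is an arc and \eqref{third} when $\gamma_1$ is a loop), and use Lemma \ref{skein-subs} to convert the loosened $M$-paths back into the $\overline{\chi}$ invariants with the signed-excess $\Y$-monomials. The additional commentary you give on signs, orientations, and the topological identification of the concatenated $M$-paths with the smoothings $\alpha$ and $\beta$ is correct supplementary reasoning but not a departure from the paper's argument.
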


\begin{prop}
\label{prop:skein3}
Let $\gamma$ be a generalized arc or closed loop with a self-intersection at $x$.
Let $\alpha_1$, $\alpha_2$, and $\beta$ be the generalized arcs and loops
obtained by 
{\it smoothing} at $x$, as in Figure \ref{skein5figure}.
Construct the standard $M$-path $\rho_\gamma$ for $\gamma$, and write it as 
$\rho_\gamma^{(3)} \circ \rho_\gamma^{(2)} \circ \rho_{\gamma}^{(1)}$, where the 
subpaths meet at point $x$.  (If $\gamma$ is a loop, we let 
$\rho_{\gamma}^{(3)}$ be the empty path.)
We define an $M$-path $\tilde{\rho}_{\alpha_1} = \rho_{\gamma}^{(2)}$,
and choose loosened $M$-paths $\tilde{\rho}_{\alpha_2}$ and $\tilde{\rho}_\beta$ which are isotopic to 
$\rho_{\gamma}^{(3)} \circ \rho_{\gamma}^{(1)}$ and $\rho_{\gamma}^{(3)} \circ (\rho_{\gamma}^{(2)})^{-1} \circ \rho_{\gamma}^{(1)}$, respectively.
Then we have the identity
\begin{equation}\label{skein-eq3}
\overline{\chi}_{\gamma,T}  = 
\pm \overline{\chi}_{\alpha_1,T} \overline{\chi}_{\alpha_2,T} 
\prod_{i=1}^n \Y_i^{\frac{ - \tilde{\ell}(\tilde{\rho}_{\alpha_2}, \tau_i)}{2}}
\pm \overline{\chi}_{\beta,T}\prod_{i=1}^n \Y_i^{\frac{ - \tilde{\ell}(\tilde{\rho}_{\beta}, \tau_i)}{2}}.
\end{equation}
\end{prop}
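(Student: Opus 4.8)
\textbf{Proof plan for Proposition \ref{prop:skein3}.}
The plan is to deduce this self-intersection skein relation from the matrix identity \eqref{second} in Lemma \ref{matrix-identity}, in exactly the same spirit as the proofs of Propositions \ref{prop:skein1} and \ref{prop:skein2}. First I would set up the matrix bookkeeping: let $\rho_\gamma = \rho_\gamma^{(3)}\circ\rho_\gamma^{(2)}\circ\rho_\gamma^{(1)}$ be the decomposition of the standard $M$-path of $\gamma$ at the self-intersection point $x$, and write $m_1 = \overline{M}(\rho_\gamma^{(2)})$, $m_2 = \overline{M}(\rho_\gamma^{(3)}\circ\rho_\gamma^{(1)})$ (reading off from the fact that $\gamma$ returns to a neighborhood of $x$, these concatenate in the correct order as $2\times 2$ matrices in $SL_2(\R)$). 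Since $m_1$ is a product of matrices associated to elementary steps, each lying in $SL_2(\R)$ after the $\sqrt{\Y}$-normalization, we have $\det(m_1)=1$, so identity \eqref{second} applies with the roles $m_3 = m_2$ arranged appropriately.

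The key geometric input is the identification of the three smoothed configurations with the three matrix products appearing in \eqref{second}. By construction $\widetilde{\rho}_{\alpha_1} = \rho_\gamma^{(2)}$ (the loop piece), so $\overline{M}(\widetilde{\rho}_{\alpha_1})$ contributes a trace $\tr(m_1)$; the loosened $M$-path $\widetilde{\rho}_{\alpha_2}\sim\rho_\gamma^{(3)}\circ\rho_\gamma^{(1)}$ contributes $\UR$ of a product of the form $m_3 m_2$ (the two outer pieces glued directly); and $\widetilde{\rho}_{\beta}\sim\rho_\gamma^{(3)}\circ(\rho_\gamma^{(2)})^{-1}\circ\rho_\gamma^{(1)}$ contributes $\UR(m_3 m_1^{-1} m_2)$, while $\overline{\chi}_{\gamma,T}$ itself is $\UR(m_3 m_1 m_2)$ up to sign. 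Plugging these into \eqref{second} gives an identity among the $\UR$ and $\tr$ of these reduced matrices, with signs $\pm$ absorbing the $PSL_2$ ambiguity (Remark \ref{ReducedMats}). Finally, I would convert the reduced-matrix quantities $|\UR(\overline{M}(\widetilde\rho))|$ back to the $\overline{\chi}$'s via Lemma \ref{skein-subs}, which produces exactly the monomial factors $\prod_i \Y_i^{-\tilde\ell(\widetilde\rho_{\alpha_2},\tau_i)/2}$ and $\prod_i \Y_i^{-\tilde\ell(\widetilde\rho_{\beta},\tau_i)/2}$ in the statement (with the convention that a closed-loop subpath contributes zero signed excess).

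The main obstacle I anticipate is the careful topological verification that the three loosened $M$-paths $\widetilde{\rho}_{\alpha_1},\widetilde{\rho}_{\alpha_2},\widetilde{\rho}_\beta$ can indeed be chosen isotopic to the prescribed concatenations $\rho_\gamma^{(2)}$, $\rho_\gamma^{(3)}\circ\rho_\gamma^{(1)}$, and $\rho_\gamma^{(3)}\circ(\rho_\gamma^{(2)})^{-1}\circ\rho_\gamma^{(1)}$ \emph{while remaining valid $M$-paths for the smoothed curves $\alpha_1$, $\alpha_2$, $\beta$} --- that is, checking that smoothing the self-intersection at $x$ really does produce these three curves and that the local picture near $x$ matches up the endpoints $v_{m,\tau}^{\pm}$ correctly so that the matrix products concatenate without spurious factors. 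This is the step that requires genuinely new care compared to Propositions \ref{prop:skein1} and \ref{prop:skein2}, because here the single curve $\gamma$ passes through $x$ twice and one of the two resolutions splits off a closed loop; I would handle it by a careful local analysis at $x$ (as in Figure \ref{skein5figure}), reducing to the case where $x$ lies in the interior of a triangle of $T$ and tracking how the two strands of $\gamma$ through $x$ reconnect. The remaining steps --- the matrix identity and the substitution via Lemma \ref{skein-subs} --- are then formal.
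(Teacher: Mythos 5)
Your plan matches the paper's proof: take reduced matrices $m_1 = \overline{M}(\rho_\gamma^{(2)})$, $m_2 = \overline{M}(\rho_\gamma^{(1)})$, $m_3 = \overline{M}(\rho_\gamma^{(3)})$, apply the relevant identity from Lemma \ref{matrix-identity}, and then use Lemma \ref{skein-subs} together with Remark \ref{ReducedMats} to convert the upper-right entries and traces into $\overline{\chi}$'s with the stated $\Y$-factors. One correction you will need to make when carrying this out: you cite only identity \eqref{second}, which handles the case that $\gamma$ is a generalized arc; when $\gamma$ is a closed loop (so $\rho_\gamma^{(3)}$ is empty and $\overline{\chi}_{\gamma,T}$ is a trace, not an upper-right entry), every $\UR$ must be replaced by $\tr$, which is exactly identity \eqref{third} --- the paper explicitly splits into these two cases. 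Also, your opening definition $m_2 = \overline{M}(\rho_\gamma^{(3)}\circ\rho_\gamma^{(1)})$ as a single combined matrix conflicts with your later use of the three-factor products $m_3 m_1 m_2$ and $m_3 m_1^{-1} m_2$; keeping the three factors separate from the outset avoids this. The topological verification you flag as the ``main obstacle'' is in fact already discharged by the hypotheses of the Proposition, which prescribe the isotopy classes of the loosened $M$-paths, combined with Remark \ref{ReducedMats}, which guarantees that isotopic $M$-paths give the same reduced matrix up to sign; the paper treats this step as immediate rather than as a point requiring new argument.
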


\begin{remark}
In the above three propositions, as well as Propositions \ref{up:skein1}, \ref{up:skein2}, \ref{up:skein3}, and Corollary \ref{cor:skein}, the signs should be positive; see \cite[Section 12]{FG1} and \cite{Th-Note}.
\end{remark}

\begin{proof}
We start by proving Proposition \ref{prop:skein1}.  
Let $m_1 = \overline{M}(\tilde{\rho}_{\beta_1})$, $m_2 = \overline{M}(\rho_{\alpha_1})$, and $m_3 = \overline{M}(\rho_{\alpha_2})$.  By \eqref{skein1-isotopy} and 
Remark \ref{ReducedMats}, we have that 
\begin{equation}\label{matrix-partial}
m_1 m_3 = \pm\overline{M}(\tilde{\rho}_{\gamma_1}),~~~
m_2 m_1 = \pm\overline{M}(\tilde{\rho}_{\gamma_2}), \mathrm{~and~}~~ 
m_2 m_1 m_3 = \pm\overline{M}(\tilde{\rho}_{\beta_2}).
\end{equation}
By \eqref{first},
it follows that 
\begin{equation*}\label{ur-equation}
{\small |\UR\left(\overline{M}(\tilde{\rho}_{\gamma_1})\right)\UR\left(\overline{M}(\tilde{\rho}_{\gamma_2})\right)|
= \pm |\UR\left(\overline{M}(\rho_{\alpha_1})\right)\UR\left(\overline{M}(\rho_{\alpha_2})\right)|
~~\pm~~ |\UR\left(\overline{M}(\tilde{\rho}_{\beta_1})\right)\UR\left(\overline{M}(\tilde{\rho}_{\beta_2})\right)|.}
\end{equation*}
Applying Lemma \ref{skein-subs} 
to each of $\tilde{\rho}_{\gamma_1},~
\tilde{\rho}_{\gamma_2}$, and 
$\tilde{\rho}_{\beta_2}$, and substituting into the above equation, 
we obtain  
\eqref{skein-eq1}, as desired.

We now  prove Proposition \ref{prop:skein2}.  
Let $m_1 = \overline{M}(\rho_{\gamma_2})$, $m_2 = \overline{M}(\rho_{\gamma_1}^{(1)})$, and $m_3 = \overline{M}(\rho_{\gamma_1}^{(2)})$.
In the case that $\gamma_1$ is a loop, $\alpha$ and $\beta$ are also loops,
and $\tilde{\ell}(\rho_{\alpha},\tau_i) = 
\tilde{\ell}(\rho_{\beta},\tau_i) = 0$
for any arc $\tau_i \in T$.
Since in this case, $\tilde{\rho}_{\alpha}$ and $\tilde{\rho}_{\beta}$ are 
$M$-paths (not loosened $M$-paths), we obtain  \eqref{skein-eq2} immediately from \eqref{third}.
In the case that  $\gamma_1$ is a generalized arc, we have that
$\tilde{\rho}_{\alpha}$ and $\tilde{\rho}_{\beta}$ are loosened $M$-paths, so 
we apply Lemma \ref{skein-subs}  to
$|\UR\left(\overline{M}(\tilde{\rho}_{\alpha})\right))|$ and $|\UR\left(\overline{M}(\tilde{\rho}_{\beta})\right))|$.  
We then obtain \eqref{skein-eq2} from \eqref{second}.

The proof of Proposition \ref{prop:skein3} is analogous.  
We set $m_1 = \overline{M}(\rho_{\gamma}^{(2)}),$
$m_2 = \overline{M}(\rho_{\gamma}^{(1)}),$ and 
$m_3 = \overline{M}(\rho_{\gamma}^{(3)})$.  
If $\gamma$ is a generalized arc, then from \eqref{second} we have that 
$\UR(m_3 m_1 m_2)  = 
\UR(m_3 m_2) \tr(m_1) - 
\UR(m_3 m_1^{-1} m_2).$
And if $\gamma$ is a closed loop, then from \eqref{third} we have that
$\tr (m_1 m_2) = 
\tr(m_2) \tr(m_1) - 
\tr(m_1^{-1} m_2).$ 
Applying  Lemma \ref{skein-subs} gives
\eqref{skein-eq3}.
\end{proof}

We can use  the previous three propositions to obtain formulas for the 
quantities $\hat{\chi}_{\gamma,T}$. 
Before stating these formulas, we introduce a variant of intersection numbers that we refer to as the \emph{signed intersection number} between a (loosened) $M$-path $\rho$ and an arc $\tau \in T$. 

Recall from Definition \ref{e_p} that $e(\gamma,\tau)$ denotes the \emph{intersection number} between $\gamma$ and $\tau$, i.e. the number of crossings between the generalized arc $\gamma$ and the arc $\tau$.

\begin{definition}
Given a (loosened) $M$-path $\tilde{\rho}_\gamma$ for a generalized arc or loop $\gamma$,
and an arc $\tau$, we define the \emph{signed intersection number} between $\tilde{\rho}_\gamma$ and $\tau$ to be $$\ell(\tilde{\rho}_\gamma,\tau) = e(\gamma,\tau) + \tilde{\ell}(\tilde{\rho}_\gamma,\tau).$$ 
\end{definition}

\begin{remark} \label{FT-dict}
Our definitions of signed intersection numbers and signed excess for loosened $M$-paths are motivated by the definition of transverse measures appearing in \cite[Sec. 13]{FT}.  In particular,
when $T$ has no self-folded triangles, the signed intersection number $\ell(\tilde{\rho}_\gamma,\tau)$ between a loosened $M$-path $\tilde{\rho}_\gamma$ corresponding to an arc $\gamma$ and an arc $\tau \in T$ is equal to the \emph{transverse measure} $\ell_{\overline{L}_\tau}(\overline{\gamma})$ between a \emph{lift} $\overline{\gamma}$ of the arc $\gamma$ to an \emph{opened surface} and a lift $\overline{L}_\tau$ of the elementary lamination corresponding to arc $\tau \in T$.  There is a direct connection between these two quantities if $T$ has self-folded triangles as well, but the formula relating the two is more complicated.  In particular, in this case we have a multi-lamination containing two elementary laminations spiralling into a puncture (one clockwise, one counterclockwise), but the transverse measures associated to such laminations do not match up as simply to the signed intersection numbers which are defined in this paper.
\end{remark}

\begin{corollary}\label{cor:skein}
Using the notation  of Proposition \ref{prop:skein1}, we have 
{\small \begin{align*} \label{skein-eq4}
{\chi}_{\gamma_1,T}~ {\chi}_{\gamma_2,T} = &\pm {\chi}_{\alpha_1,T} ~{\chi}_{\alpha_2,T}
\prod_{i=1}^n \Y_i^{\frac{\ell(\tilde{\rho}_{\gamma_1}, \tau_i)
+ \ell(\tilde{\rho}_{\gamma_2}, \tau_i) - \ell(\rho_{\alpha_1}, \tau_i)
- \ell(\rho_{\alpha_2}, \tau_i)}{2}}\\
& \pm {\chi}_{\beta_1,T} ~{\chi}_{\beta_2,T} \prod_{i=1}^n \Y_i^{\frac{\ell(\tilde{\rho}_{\gamma_1}, \tau_i)
+ \ell(\tilde{\rho}_{\gamma_2}, \tau_i) - \ell(\tilde{\rho}_{\beta_1}, \tau_i)
- \ell(\tilde{\rho}_{\beta_2}, \tau_i)}{2}}. 
\end{align*}}

Using the notation  of Proposition \ref{prop:skein2}, we have 
\begin{equation*} \label{skein-eq5}
{\chi}_{\gamma_1,T} ~{\chi}_{\gamma_2,T} =  \pm {\chi}_{\alpha,T} 
\prod_{i=1}^n \Y_i^{\frac{\ell(\rho_{\gamma_1},\tau_i) + \ell(\rho_{\gamma_2},\tau_i) - \ell(\tilde{\rho}_{\alpha}, \tau_i)}{2}}
\pm {\chi}_{\beta,T}\prod_{i=1}^n \Y_i^{ \frac{\ell(\rho_{\gamma_1},\tau_i) + \ell(\rho_{\gamma_2},\tau_i)
 -\ell(\tilde{\rho}_{\beta}, \tau_i)}{2}}.
\end{equation*}

Using the notation of of Proposition \ref{prop:skein3}, we have 
\begin{equation*}\label{skein-eq6}
{\chi}_{\gamma,T}  = 
\pm {\chi}_{\alpha_1,T} {\chi}_{\alpha_2,T} 
\prod_{i=1}^n \Y_i^{\frac{\ell(\rho_{\gamma},\tau_i) - \ell(\tilde{\rho}_{\alpha_1}, \tau_i)
- \ell(\tilde{\rho}_{\alpha_2}, \tau_i)}{2}}
\pm {\chi}_{\beta,T}\prod_{i=1}^n \Y_i^{\frac{\ell(\rho_{\gamma},\tau_i) - \ell(\tilde{\rho}_{\beta}, \tau_i)}{2}}.
\end{equation*}
\end{corollary}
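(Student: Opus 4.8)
\textbf{Proof proposal for Corollary \ref{cor:skein}.}

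The plan is to deduce each of the three displayed identities directly from the corresponding Proposition (\ref{prop:skein1}, \ref{prop:skein2}, \ref{prop:skein3}) by applying the specialization map $\Phi$ and rewriting the exponents in terms of signed intersection numbers rather than signed excesses. Recall from Definition \ref{def:chi} that $\chi_{\gamma,T} = \Phi(\hat{\chi}_{\gamma,T})$, and that $\hat{\chi}_{\gamma,T}$ and $\overline{\chi}_{\gamma,T}$ differ only by a factor of $\prod_i \Y_i^{e(\gamma,\tau_i)/2}$: indeed, each elementary step of type $2$ crossing $\tau_i$ contributes a factor $\Y_{\tau_i}$ to $M(\rho_\gamma)$ but only $\Y_{\tau_i}^{\pm 1/2}$ to $\overline{M}(\rho_\gamma)$, so reading off $\UR$ or $\tr$ gives $\hat{\chi}_{\gamma,T} = \overline{\chi}_{\gamma,T}\prod_{i=1}^n \Y_i^{e(\gamma,\tau_i)/2}$, where $e(\gamma,\tau_i)$ counts the crossings of $\gamma$ with $\tau_i$ (and hence the number of type-$2$ steps in the standard $M$-path, by the last bullet of Definition \ref{matrixforarc}). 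The same relation holds for closed loops with $\UR$ replaced by $\tr$, and in that case $e(\gamma,\tau_i)$ is counted with multiplicity.

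First I would take Proposition \ref{prop:skein1}, which is stated in terms of $\overline{\chi}$ and the signed excesses $\tilde{\ell}$. I multiply both sides of \eqref{skein-eq1} by $\prod_{i=1}^n \Y_i^{(e(\gamma_1,\tau_i)+e(\gamma_2,\tau_i))/2}$. On the left this converts $\overline{\chi}_{\gamma_1,T}\,\overline{\chi}_{\gamma_2,T}$ into $\hat{\chi}_{\gamma_1,T}\,\hat{\chi}_{\gamma_2,T}$. On the right, for the first term I need the identity
\begin{equation*}
e(\gamma_1,\tau_i) + e(\gamma_2,\tau_i) + \tilde{\ell}(\tilde{\rho}_{\gamma_1},\tau_i) + \tilde{\ell}(\tilde{\rho}_{\gamma_2},\tau_i)
= \ell(\tilde{\rho}_{\gamma_1},\tau_i) + \ell(\tilde{\rho}_{\gamma_2},\tau_i) + e(\alpha_1,\tau_i) + e(\alpha_2,\tau_i) - \ell(\rho_{\alpha_1},\tau_i) - \ell(\rho_{\alpha_2},\tau_i),
\end{equation*}
which follows at once from the definition $\ell(\tilde{\rho},\tau) = e(\cdot,\tau) + \tilde{\ell}(\tilde{\rho},\tau)$ together with the fact that $\rho_{\alpha_1},\rho_{\alpha_2}$ are standard (unloosened) $M$-paths, so $\tilde{\ell}(\rho_{\alpha_j},\tau_i)=0$ and $\ell(\rho_{\alpha_j},\tau_i) = e(\alpha_j,\tau_i)$. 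Thus multiplying the first $\overline{\chi}$-term by the same power of $\Y$ and regrouping converts $\overline{\chi}_{\alpha_1,T}\,\overline{\chi}_{\alpha_2,T}$ into $\hat{\chi}_{\alpha_1,T}\,\hat{\chi}_{\alpha_2,T}$ with the stated exponent; the analogous bookkeeping handles the $\beta$-term, using $\ell(\tilde{\rho}_{\beta_j},\tau_i) = e(\beta_j,\tau_i) + \tilde{\ell}(\tilde{\rho}_{\beta_j},\tau_i)$. Finally I apply $\Phi$ to the resulting identity of elements of $\mathbb{Z}[\Y_{\tau_i}^{\pm}, x_i^{\pm}]$; since $\Phi$ is a ring homomorphism it passes through products and sums, converting each $\hat{\chi}$ into the corresponding $\chi$ and each $\Y_i$-monomial into its image, which yields exactly the first displayed formula of the Corollary. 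The derivations of the second and third formulas from Propositions \ref{prop:skein2} and \ref{prop:skein3} are word-for-word the same, the only differences being which of the smoothed pieces are closed loops (so that the corresponding $e(\cdot,\tau_i)$ is counted with multiplicity and the corresponding $\tilde{\ell}$ vanishes by the convention in Definition \ref{signed-exc}) and that one sometimes uses $\tr$ in place of $\UR$ in the relation between $\hat{\chi}$ and $\overline{\chi}$.

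I do not expect a serious obstacle here: the content of the Corollary is entirely in the three Propositions, and what remains is purely the translation $\overline{\chi} \leftrightarrow \hat{\chi} \leftrightarrow \chi$ and the substitution $\tilde{\ell} \mapsto \ell - e$. The one point that requires a moment of care — and is the closest thing to a ``hard part'' — is verifying that the power of $\Y$ by which one multiplies is correctly distributed across the two terms on the right-hand side so that the exponents come out symmetric in the way stated; this is just the linear-algebra identity displayed above, applied once per term. One should also note that this step implicitly uses that the signs $\pm$ in the Propositions are unaffected by multiplying by a (Laurent) monomial in the $\Y_i$ and by applying $\Phi$, which is immediate since $\Phi$ sends monomials to monomials (no cancellation of signs occurs).
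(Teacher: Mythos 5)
Your proof is correct and follows essentially the same route as the paper: both pass from the $\overline{\chi}$-identities of Propositions \ref{prop:skein1}--\ref{prop:skein3} to the $\chi$-identities by observing that $\hat{\chi}_{\gamma,T}$ and $\overline{\chi}_{\gamma,T}$ differ by $\prod_i \Y_i^{e(\gamma,\tau_i)/2}$ (one factor of $\sqrt{\Y_{\tau_i}}$ per type-2 step), which precisely upgrades each signed excess $\tilde{\ell}$ to a signed intersection number $\ell = e + \tilde{\ell}$ in the exponents. The only cosmetic difference is that you phrase the final step as applying $\Phi$, whereas the paper invokes the standing assumption (Remark \ref{rem:no-self-folded}) that $T$ has no self-folded triangles to conclude directly that $\chi_{\gamma,T}=\hat{\chi}_{\gamma,T}$; these amount to the same thing since $\Phi$ is the identity in that setting.
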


\begin{proof}
Since we have assumed that $T$ has no self-folded triangles,
we have that $\chi_{\gamma,T} = \hat{\chi}_{\gamma,T}$.
Now it follows from Definition \ref{def:chi} that when we substitute the quantities $\hat{\chi}_{\gamma,T}$ for $\overline{\chi}_{\gamma,T}$ 
in Propositions \ref{prop:skein1}, \ref{prop:skein2} and \ref{prop:skein3},  
we get an extra factor of  $\sqrt{y_{\tau_i}}$ for every elementary step of type 2 in the 
corresponding $M$-path.  This explains the appearance of the $e(\gamma,\tau_i)/2$'s (implicitly via the signed intersection numbers) in the exponents of the new identities.  Corollary
\ref{cor:skein} then follows from \eqref{skein-eq1}, \eqref{skein-eq2}, and \eqref{skein-eq3}.
\end{proof}

\begin{remark}
Note that the coefficients in Corollary \ref{cor:skein} seem to depend on our choices of $M$-paths and 
loosened $M$-paths.  
However, it follows from \cite[(13.10)]{FT} (which cites \cite{FG1, FG3} and 
well-known facts about intersection numbers)
that in fact the coefficients do not depend on our choices.  
To apply \cite[(13.10)]{FT} to our situation, we use the dictionary indicated in Remark \ref{FT-dict} in the case when $T$ has no self-folded triangles.  Though the signed intersection numbers or signed excesses \emph{do} depend on the choices of loosened $M$-paths, the differences between the signed intersection numbers appearing in the exponents of these expressions are independent of these choices.  
\end{remark}

\begin{lemma} The coefficients appearing in Corollary \ref{cor:skein} 
are Laurent monomials in the variables $y_1,\dots,y_n$.
\end{lemma}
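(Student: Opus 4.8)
The plan is to show that each exponent appearing in Corollary \ref{cor:skein} is a (possibly negative) integer, and that the various negative contributions are always cancelled by corresponding positive ones, so that the coefficient is an honest Laurent monomial rather than a product of fractional powers of the $y_i$. The first point is the easier one: by Remark \ref{FT-dict}, each signed intersection number $\ell(\tilde\rho_\gamma,\tau_i)$ equals a transverse measure $\ell_{\overline L_{\tau_i}}(\overline\gamma)$ of a lift of $\gamma$ against a lift of the elementary lamination $L_{\tau_i}$, and such transverse measures are integers; since the exponents in Corollary \ref{cor:skein} are of the form $\tfrac12(\ell(\tilde\rho_{\gamma_1},\tau_i)+\ell(\tilde\rho_{\gamma_2},\tau_i)-\ell(\rho_{\alpha_1},\tau_i)-\ell(\rho_{\alpha_2},\tau_i))$ and its analogues, it suffices to check parity. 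First I would observe that the smoothing operation does not change the total number of crossings with any given arc $\tau_i$ modulo $2$ — smoothing $\{\gamma_1,\gamma_2\}$ at $x$ only reroutes the two strands locally away from $T$ (we may choose $x$ off all arcs of $T$), so $e(\gamma_1,\tau_i)+e(\gamma_2,\tau_i)\equiv e(\alpha_1,\tau_i)+e(\alpha_2,\tau_i)\equiv e(\beta_1,\tau_i)+e(\beta_2,\tau_i)\pmod 2$, and similarly for the signed-excess pieces coming from the loosened portions $\sigma_1,\sigma_2$, which in the three Propositions are matched up compatibly under the isotopies \eqref{skein1-isotopy}. Combining these congruences shows every exponent is an integer.

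Next I would address the genuine concern, which is the sign: even knowing the exponents are integers, one must rule out a denominator that cannot be cancelled, i.e. one must see that the coefficient lies in $\Z[y_1^{\pm},\dots,y_n^{\pm}]$ as a single monomial and not merely in the fraction field. The cleanest route is to invoke the independence-of-choices statement already recorded in the Remark following Corollary \ref{cor:skein} (citing \cite[(13.10)]{FT}): since the exponent of $\Y_i$ in, say, the first term of the skein relation does not depend on the chosen loosened $M$-paths, I would choose the loosened $M$-paths to be \emph{minimal}, i.e. so that $\sigma_1$ and $\sigma_2$ are trivial on each side; then every signed excess $\tilde\ell$ vanishes, the exponents reduce to differences of ordinary (nonnegative) intersection numbers $\tfrac12(e(\gamma_1,\tau_i)+e(\gamma_2,\tau_i)-e(\alpha_1,\tau_i)-e(\alpha_2,\tau_i))$, and these are bona fide integers by the parity argument above. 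Hence the coefficient is a product $\prod_i y_i^{n_i}$ with $n_i\in\Z$, which is exactly a Laurent monomial. The same argument applies verbatim to the Proposition \ref{prop:skein2} and Proposition \ref{prop:skein3} identities, using in those cases that smoothing a crossing with a loop, or resolving a self-intersection, likewise preserves crossing numbers with each $\tau_i$ modulo $2$.

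I expect the main obstacle to be bookkeeping rather than conceptual: one must be careful that the isotopies in \eqref{skein1-isotopy}, \eqref{skein-eq2}, \eqref{skein-eq3} really do identify the signed-excess data of $\tilde\rho_{\gamma_1}$, $\tilde\rho_{\gamma_2}$, $\tilde\rho_{\beta_1}$, $\tilde\rho_{\beta_2}$ (resp. the loop analogues) in a way that makes the alternating sums in the exponents telescope to differences of even integers. A secondary subtlety is that when $T$ has self-folded triangles the dictionary of Remark \ref{FT-dict} is more complicated; but since Corollary \ref{cor:skein} and the present lemma are stated under the standing assumption (Remark \ref{rem:no-self-folded}) that $T$ has no self-folded triangles, this case does not arise and the transverse-measure interpretation applies directly. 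I would therefore write the proof as: (i) reduce to showing the $\Y_i$-exponents are integers, by the independence-of-choices remark; (ii) choose minimal loosened $M$-paths so all signed excesses vanish; (iii) invoke invariance of crossing numbers modulo $2$ under smoothing to conclude each exponent $\tfrac12(\,\cdot\,)$ is an integer; (iv) note this means the coefficient is $\prod_i y_i^{n_i}$, $n_i\in\Z$, hence a Laurent monomial.
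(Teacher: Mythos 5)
Your first paragraph is essentially the paper's argument and is correct: you show each exponent $\tfrac12(\cdots)$ is an integer by (a) observing that for a loosened $M$-path $\tilde\rho_\gamma = \sigma_2\circ\rho_\gamma\circ\sigma_1$ the signed intersection number $\ell(\tilde\rho_\gamma,\tau) = x\pm a\pm b$ has the same parity as the ordinary crossing number $e(\tilde\rho_\gamma,\tau)=x+a+b$, and (b) observing that smoothing a crossing changes the total crossing number with any $\tau$ by a (nonnegative) even integer. That is exactly the paper's proof, modulo wording.

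The problem is with everything after ``Next I would address the genuine concern.'' There is no further concern: once you know each exponent $n_i$ is an integer, $\prod_i y_i^{n_i}$ \emph{is}, by definition, a Laurent monomial in $\Z[y_1^{\pm},\dots,y_n^{\pm}]$. You have manufactured a phantom issue (``not merely in the fraction field'') and then proposed an argument that does not work to resolve it. Specifically, you cannot in general ``choose the loosened $M$-paths to be minimal, so that $\sigma_1$ and $\sigma_2$ are trivial.'' In Proposition \ref{prop:skein1} the loosened $M$-paths $\tilde\rho_{\gamma_1},\tilde\rho_{\gamma_2},\tilde\rho_{\beta_2}$ are required to be isotopic to the concatenations $\tilde\rho_{\beta_1}\circ\rho_{\alpha_2}$, $\rho_{\alpha_1}\circ\tilde\rho_{\beta_1}$, $\rho_{\alpha_1}\circ\tilde\rho_{\beta_1}\circ\rho_{\alpha_2}$, and the concatenated curve starts and ends at the endpoints of the constituent standard $M$-paths $\rho_{\alpha_1},\rho_{\alpha_2}$ — points of the form $v_{m,\tau}^{\pm}$ determined by the first and last arcs crossed by $\alpha_1,\alpha_2$. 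The standard $M$-path for $\gamma_1$ would generically begin at a different $v_{m,\tau'}^{\pm}$ (a different arc of $T$ at the same marked point), so a nontrivial $\sigma$ is unavoidable and the signed excess is genuinely nonzero. Indeed, if you could always kill the $\sigma$'s, the coefficient would reduce to $\prod_i y_i^{\frac12(e(\gamma_1,\tau_i)+e(\gamma_2,\tau_i)-e(\alpha_1,\tau_i)-e(\alpha_2,\tau_i))}$; but the correct coefficient in Proposition \ref{up:skein1} involves crossings with the elementary laminations $L_i$, not with $\tau_i$, and these differ precisely because of endpoint behavior. So step (ii) of your plan is false. Fortunately it is also unnecessary: delete it and your remaining parity argument is a complete proof.
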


\begin{proof}
It suffices to show that the exponent of each $y_i$ in Corollary \ref{cor:skein}
is an integer (not a half-integer).
We first claim that for any loosened $M$-path $\tilde{\rho}_{\gamma}$,
$\ell(\tilde{\rho}_{\gamma}, \tau) \equiv
 e(\tilde{\rho}_{\gamma},\tau) \mod 2$.
To see this, write 
$\tilde{\rho}_{\gamma} = \sigma_2 \circ \rho_{\gamma} \circ \sigma_1$,
and let $a = e(\sigma_1, \tau)$,
$b=e(\sigma_2,\tau)$, and $x=e(\rho_{\gamma}, \tau)$.
Then $e(\tilde{\rho}_{\gamma},\tau) = x+a+b$
and $\ell(\tilde{\rho}_{\gamma},\tau) = x\pm a\pm b$.  It's clear
that both $x+a+b$ and $x \pm a \pm b$ have the same parity.

Now observe that if $C$ (as in Definition \ref{def:smoothing})
is a pair of curves with an intersection or 
one curve with a self-intersection, and we smooth $C$ at a
point $x$, obtaining $C_+$, then 
$e(C,\tau) - e(C_+,\tau)$ is a (positive) even integer.
Here, if $C = \{\gamma_1,\gamma_2\}$, then 
$e(C,\tau)$ denotes $e(\gamma_1,\tau)+e(\gamma_2,\tau)$.
The lemma follows.
\end{proof}

We now turn back to the unpunctured surface case, and Propositions \ref{up:skein1}, \ref{up:skein2}, and \ref{up:skein3}.  In this situation, every marked point $m \in M$ is on the boundary of $S$, and thus we have an arc segment $h_m \cap S$ (rather than a circle $h_m$) associated to each $m \in M$.  We define some new notation that will be useful in the proofs of these propositions.

\begin{figure}
\input{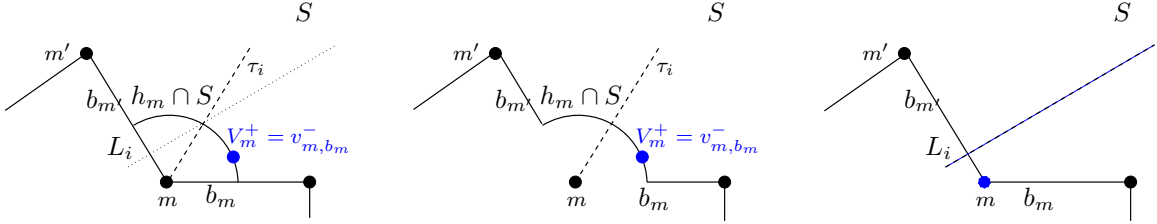}
\caption{(Left): Illustrating a horocyclic arc segment at a boundary.  (Middle and Right): Comparing the local configuration of $\tau_i$ and $V_m^+$ with the local configuration 
of $L_i$ and the marked point $m$. }
\label{fig HoroBound}
\end{figure}

Given a 
boundary component $C_m$ containing $m$, we let $b_m$ denote the boundary segment immediately before and incident to $m$ when traveling along $C_m$ in the counterclockwise direction.  We let $V_m^+$ denote the clockwise-most vertex on $h_m \cap S$, which also has the form $v_{m,b_m}^-$.  See Figure \ref{fig HoroBound}.  
With this notation in mind, we now prove Propositions 
\ref{up:skein1},  
\ref{up:skein2}, and   
\ref{up:skein3} together, using   
Corollary \ref{cor:skein}.

\begin{proof} 
The main idea of this proof is that for each arc
$\alpha$ involved in one of the skein relations, 
we can choose a loosened $M$-path 
$\tilde{\rho}_{\alpha}$ such that 
$\ell(\tilde{\rho}_{\alpha}, \tau_i) 
= e(\alpha, L_i)$.  
In particular, we choose loosened $M$-paths $\tilde{\rho}$ so that they start and end at a point in the set $\{V_m^+\}_{m \in M}$.  For a generalized arc $\gamma$, this completely determines the starting and ending points of the associated loosened $M$-path $\tilde{\rho}_{\gamma}$.  
As above, we decompose such a loosened $M$-path $\tilde{\rho}_{\gamma}$ as $\sigma_2 \circ \rho_{\gamma} \circ \sigma_1$ where $\rho_{\gamma}$ is an $M$-path, and $\sigma_i$ consists only of steps of type $1$ and $2$.

First, observe that in this situation 
the signed excess $\tilde{\ell}(\tilde{\rho}_{\gamma},\tau_i)$ is nonnegative and equal to $e(\sigma_1,\tau_i) + e(\sigma_2,\tau_i)$ for any generalized arc $\gamma$ and any arc $\tau_i \in T$;  
see Figure \ref{fig:sgn-excess}.
Second, 
observe from Figure \ref{fig HoroBound} 
that the relationship between $V_m^+$ and $\tau_i$  is analogous to the 
relationship between $m$ and $L_i$:
in particular, any arc $\tau_i \in T$ incident to $m$ lies in the counterclockwise direction (along $C_m$) from $V_m^+$ just as the corresponding elementary lamination $L_i$ lies in the counterclockwise
direction from $m$.  Consequently, for any generalized arc or loop $\gamma$ and arc $\tau_i \in T$, we have the identity
\begin{equation}
\label{up-identity}
\ell(\tilde{\rho}_{\gamma},\tau_i) = \tilde{\ell}(\tilde{\rho}_{\gamma},\tau_i) + e(\gamma,\tau_i) = 
e(\sigma_1,\tau_i) + e(\sigma_2,\tau_i) + e(\gamma,\tau_i)
= e(\gamma, L_i).
\end{equation}
Plugging \eqref{up-identity} into Corollary \ref{cor:skein} completes the proof of all three propositions.
\end{proof}

See Figure \ref{fig Mbar} for an example; note that of the three laminations listed, $L_{\gamma_1}$ is the only lamination that crosses $\{\gamma_1,\gamma_2\}$ and $\{\beta_1,\beta_2\}$ but not  
$\{\alpha_1,\alpha_2\}$.  Analogously the concatenation $\tilde{\rho}_{\alpha_2} \circ \tilde{\rho}_{\beta_1}$ can be pulled taut as to avoid intersections with $\beta_1$ and $\beta_2$.  This yields the loosened $M$-path $\tilde{\rho}_{\gamma_1}$.  However, the loosened $M$-paths $\tilde{\rho}_{\alpha_1}$ and $\tilde{\rho}_{\beta_1}$ must intersect both the pairs $\{\alpha_1,\alpha_2\}$ and $\{\beta_1,\beta_2\}$.  This is consistent with the exchange relation
$$x_{\gamma_1} x_{\gamma_1'} = \Y_{\gamma_1}x_{\alpha_1}x_{\alpha_2} + x_{\beta_1}x_{\beta_2}$$ that we obtain by shear coordinates in this example.

\begin{figure}
\input{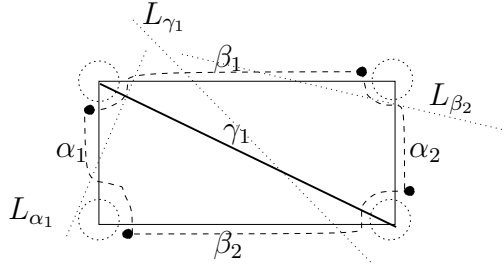}
\caption{Comparing shear coordinates with respect to elementary laminations to intersections with loosened $M$-paths}\label{fig Mbar}
\end{figure}

{} 


\begin{thebibliography}{}

\bibitem[ARS]{ARS} I. Assem, C. Reutenauer and D. Smith, Friezes, Adv. Math. 225 (2010), 3134-3165.

\bibitem[ADSS]{String} I. Assem, G. Dupont, R. Schiffler, and D. Smith, Friezes, strings and cluster variables, eprint
{\tt arXiv:1009.3341}.


\bibitem[BW]{BW} F. Bonahon and H. Wong, \emph{Quantum traces for representations of surface groups in $SL_2$}, eprint,
{\tt arXiv:1003.5250}.

\bibitem[BW2]{BW2} F. Bonahon and H. Wong, \emph{Kauffman brackets, character varieties, and triangulations of surfaces}, eprint,
{\tt arXiv:1009.0084}.

\bibitem[CK]{CK}  {P. Caldero and B. Keller}, From triangulated
categories to
  cluster algebras, \emph{Invent. Math.} {\bf 172} (2008), 169-211.


\bibitem[CL]{ConwayLagarias} J. Conway, J. Lagarias, Tiling with polyominoes
and combinatorial group theory, \emph{J. Combin. Theory Ser. A} {\bf 53 }(1990), no. 2, 
183--208.

\bibitem[DWZ]{DWZ} H. Derksen, J. Weyman and A. Zelevinsky, Quivers
  with potentials and their representations II: Applications to cluster algebras, 
\emph{J. Amer. Math. Soc.} {\bf 23} (2010), no. 3, 749-790.


\bibitem[EKLP]{EKLP} N. Elkies, G. Kuperberg, M. Larsen, J. Propp, Alternating-sign
matrices and domino tilings I, \emph{J. Algebraic Combin.} 1 (1992), no. 2, 111-132.

\bibitem[FeShTu]{FeSTu} A. Felikson, M.  Shapiro, P. Tumarkin. Skew-symmetric cluster algebras of finite mutation type, preprint, {\tt arXiv:0811.1703}.

\bibitem[FG1] {FG1} V. Fock and A. Goncharov, Moduli spaces of local
  systems and higher Teichm\"uller theory.  {\em Publ. Math. Inst. Hautes
  \'Etudes Sci.}  No. 103  (2006), 1--211. 

\bibitem[FG2]{FG2} V. Fock and A. Goncharov, Cluster ensembles,
  quantization and the dilogarithm, preprint, {\tt
  arXiv:math.AG/0311149}. 

\bibitem[FG3]{FG3} V. Fock and A. Goncharov, Dual Teichm\"uller and
lamination spaces.  Handbook of Teich\-m\"uller theory. Vol. I,  647--684,
IRMA Lect. Math. Theor. Phys., 11, Eur. Math. Soc., Z\"urich,
2007.

\bibitem[FST]{FST} S. Fomin, M. Shapiro, and D. Thurston, Cluster algebras and triangulated surfaces. Part I: Cluster complexes, \emph{Acta Math.} {\bf 201} (2008), 83-146. 

\bibitem[FT]{FT} S. Fomin and D. Thurston, Cluster algebras and triangulated surfaces. Part II: Lambda Lengths, preprint (2008),
\bibitem[FZ1]{FZ1} S. Fomin and A. Zelevinsky, Cluster algebras
I: Foundations, \emph{J. Amer. Math. Soc.} \bf 15 \rm (2002),
497-529.

\bibitem[FZ4]{FZ4} S. Fomin and A. Zelevinsky, Cluster algebras IV: Coefficients, \emph{Compositio Mathematica} \bf 143 \rm (2007), 112-164.



\bibitem[GSV05]{GSV} M. Gekhtman, M. Shapiro and A. Vainshtein,
Cluster algebras and Weil-Petersson forms, Duke Math. J.
{\bf 127} (2005), 291--311.

\bibitem[GSV11]{GSV-book} M. Gekhtman, M. Shapiro and A. Vainshtein,
Cluster algebras and Poisson Geometry, Mathematical Surveys and Monographs, {\bf 167}.  \emph{American Mathematical Society}, Providence, RI 2010.


\bibitem[MS]{MS} G. Musiker. R. Schiffler, Cluster expansion formulas and
perfect matchings, \emph{J. Algebraic Combin.} {\bf 32} (2010), no. 2, 187-209.


\bibitem[MSW]{MSW} G. Musiker, R. Schiffler, L. Williams, Positivity
for cluster algebras from surfaces,  
Adv. Math., 227, August 2011, 2241--2308.


\bibitem[MSW2]{MSW2} G. Musiker, R. Schiffler, L. Williams,
Bases for cluster algebras from surfaces, in preparation.


\bibitem[Pen]{Pen} R. Penner, The decorated Teichm\"uller space of 
punctured surfaces, Comm. Math. Phys., 113 (1987), 299-339.

\bibitem[Pen2]{LambdaLengths} R. Penner, Lambda Lengths, preprint,
{\tt http://www.ctqm.au.dk/research/MCS/lambdalengths.pdf}. 
 
\bibitem[Pen3]{TropLambda} R. Penner, Tropicalized Lambda Lengths, Measured Laminations and Convexity, 
preprint, {\tt arXiv:1106.2693}. 
 
\bibitem[Pr1]{ProppLattice} J. Propp, Lattice structure for orientations of graphs, preprint (1993), {\tt arXiv:math/0209.5005}.


\bibitem[Th]{Th-Note} D. Thurston, personal communication.

\bibitem[Th90]{Thurston} W. Thurston, Conway's tiling groups,
\emph{Amer. Math. Monthly} {\bf 97} (1990), 757--773.

\end{thebibliography}
\end{document}